\theoremstyle{plain}
\newtheorem{theorem}{Theorem}[section]
\newtheorem{proposition}[theorem]{Proposition}
\newtheorem{lemma}[theorem]{Lemma}
\newtheorem{corollary}[theorem]{Corollary}
\theoremstyle{definition}
\newtheorem{definition}[theorem]{Definition}
\theoremstyle{remark}
\newtheorem{remark}[theorem]{Remark}
\newtheorem{example}[theorem]{Example}
\newtheorem*{question}{Question}
\newcommand{\embr}{\mathrm{embr}}
\renewcommand{\lim}{\mathrm{lim}}
\newcommand{\arit}{\mathrm{ar}}
\newcommand{\degr}{\mathrm{deg}}
\newcommand{\tw}{\mathsf{tw}}
\newcommand{\twilnil}{\mathsf{tw_{ilnil}}}
\newcommand{\Tw}{\mathsf{Tw}}
\newcommand{\Twfree}{\mathsf{Tw_{free}}}
\newcommand{\Twinil}{\mathsf{Tw_{ilnil}}}
\newcommand{\Twilnil}{\mathsf{Tw_{ilnil}}}
\newcommand{\Free}{\mathsf{Free}}
\newcommand{\Com}{\mathsf{Com}}
\newcommand{\PCom}{\mathsf{PCom}}
\newcommand{\dott}{\mathsf{dot}}
\newcommand{\Ext}{\mathrm{Ext}}
\newcommand{\Hom}{\mathrm{Hom}}
\newcommand{\Def}{\mathrm{Def}}
\newcommand{\pDef}{\mathrm{pDef}}
\newcommand{\Coder}{\mathrm{Coder}}
\newcommand{\Sk}{\mathrm{Sk}}
\newcommand{\op}{^{\mathrm{op}}}
\newcommand{\Ob}{\mathrm{Ob}}
\newcommand{\Z}{\mathbb{Z}}
\newcommand{\N}{\mathbb{N}}
\newcommand{\AAA}{\mathfrak{a}}
\newcommand{\BBB}{\mathfrak{b}}
\newcommand{\ZZZ}{\mathfrak{Z}}
\newcommand{\III}{\mathfrak{i}}
\newcommand{\TTT}{\mathfrak{t}}
\newcommand{\CC}{\mathbf{C}}
\newcommand{\Mod}{\ensuremath{\mathsf{Mod}} }
\newcommand{\ddef}{\ensuremath{\mathsf{def}} }
\newcommand{\dgcat}{\ensuremath{\mathsf{dgcat}}}
\newcommand{\cat}{\ensuremath{\mathsf{cat}}}
\newcommand{\hodgcat}{\ensuremath{\mathsf{hodgcat}} }
\newcommand{\Ind}{\ensuremath{\mathsf{Ind}}}
\newcommand{\Inj}{\ensuremath{\mathsf{Inj}}}
\newcommand{\lra}{\longrightarrow}
\newcommand{\aaa}{\ensuremath{\mathcal{A}}}
\newcommand{\bbb}{\ensuremath{\mathcal{B}}}
\newcommand{\ttt}{\ensuremath{\mathcal{T}}}
\begin{document}

\title{Hochschild cohomology, the characteristic morphism and derived deformations}
\author{Wendy Lowen}
\email{wlowen@vub.ac.be}
\address{Departement of Mathematics\\ Faculty of Sciences\\ Vrije Universiteit Brussel\\ Pleinlaan
2\\1050 Brussel\\ Belgium}
\classification{18G60 (primary), 13D10 (secondary)}
\keywords{Hochschild cohomology, $B_{\infty}$-algebra, characteristic morphism, deformation, abelian category, derived category, $A_{[0,\infty[}$-category}
\thanks{The author is a postdoctoral fellow with FWO/CNRS. She acknowledges the hospitality of the Institut de Math\'ematiques de Jussieu (IMJ) and of the Institut des Hautes Etudes Scientifiques (IHES) during her postdoctoral fellowship with CNRS.}

\begin{abstract}
A notion of  Hochschild cohomology $HH^{\ast}(\aaa)$ of an abelian category $\aaa$ was defined by Lowen and Van den Bergh (2005) and they showed the existence of a characteristic morphism $\chi$ from the Hochschild cohomology of $\aaa$ into the graded centre $\ZZZ^{\ast}(D^b(\aaa))$ of the bounded derived category of $\aaa$. An element $c \in HH^2(\aaa)$ corresponds to a first order deformation $\aaa_c$ of $\aaa$ (Lowen and Van den Bergh, 2006). The problem of deforming an object $M \in D^b(\aaa)$ to $D^b(\aaa_c)$ was treated by Lowen (2005).
In this paper we show that the element $\chi(c)_M \in \Ext_{\aaa}^2(M,M)$ is precisely the obstruction to deforming $M$ to $D^b(\aaa_c)$. Hence this paper provides a missing link between the above works. Finally we discuss some implications of these facts in the direction of a ``derived deformation theory''.
\end{abstract}

\maketitle

\section{Introduction}

Let $k$ be a commutative ring. It is well known that for a $k$-algebra $A$, there is a characteristic morphism $\chi_{A}$ of graded commutative algebras from the Hochschild cohomology of $A$  to the graded centre of the derived category $D(A)$.
If $k$ is a field, this morphism is determined by the maps, for $M \in D(A)$,
$$M \otimes^{L}_A -: HH^{\ast}_k(A) \cong \Ext^{\ast}_{A^{\op} \otimes_k A}(A,A) \lra \Ext^{\ast}_A(M,M)$$
The characteristic morphism plays an important r\^ole for example in the theory of support varieties (\cite{buchweitzavramov}, \cite{erdmann}, \cite{solberg}).
Characteristic morphisms were generalized to various situations where a good notion of Hochschild cohomology is at hand.
Recently, Buchweitz and Flenner defined and studied Hochschild cohomology for \emph{morphisms} of schemes or analytic spaces, and proved the existence of a characteristic morphism in this context (\cite{buchweitzflenner3}).
In \cite{keller6}, Keller defined the Hochschild cohomology of an \emph{exact} category as the Hochschild cohomology of a certain dg quotient. For an \emph{abelian} category $\aaa$, this is precisely the Hochschild cohomology of a ``dg enhancement'' of the bounded derived category $D^b(\aaa)$. Consequently, the projection on the zero part of the Hochschild complex (see \S \ref{parproj}) is itself a natural dg enhancement of a characteristic morphism
$$\chi_{\aaa}: HH^{\ast}_{\mathrm{ex}}(\aaa) \lra \ZZZ^{\ast}(D^b(\aaa))$$
where the right hand side denotes the graded centre of $D^b(\aaa)$ (see \S \ref{parcentre}).
Explicitely, $\chi_{\aaa}$ maps a Hochschild $n$-cocycle $c$ to a collection of elements $\chi_{\aaa}(c)_M \in \Ext^{\ast}_{\aaa}(M,M)$ for $M \in D^b(\aaa)$. 
The main purpose of this paper is to give an interpretation of $\chi_{\aaa}(c)_M$ in terms of deformation theory. 

In \cite{lowenvandenbergh1}, a deformation theory of abelian categories was developed. Its relation with Hochschild cohomology goes through an alternative definition of the latter given by the authors in \cite{lowenvandenbergh2}, and shown in the same paper to be equivalent to Keller's definition. Let us consider, from now on, an abelian category $\aaa$ with enough injectives, and let us assume that $k$ is a field. Then $\Inj(\aaa)$ is a $k$-linear category and we put
\begin{equation}\label{firsteq}
HH^{\ast}_{\mathrm{ab}}(\aaa) = HH^{\ast}(\Inj(\aaa))
\end{equation}
The main advantage of $\Inj(\aaa)$ is that, considering it as a ring with several objects, its deformation theory is entirely understood in the sense of Gerstenhaber's deformation theory of algebras (\cite{gerstenhaber}). It is shown in \cite{lowenvandenbergh1} that the \emph{abelian} deformation theory of $\aaa$ is equivalent to the \emph{linear} deformation theory of $\Inj(\aaa)$, justifying \eqref{firsteq}.
An abelian deformation $\bbb$ of $\aaa$ gives rise to a morphism 
\begin{equation}\label{secondeq}
D^b(\bbb) \lra D^b(\aaa)
\end{equation}
and an obstruction theory for deforming objects $M \in D^b(\aaa)$ to $D^b(\bbb)$, which is the subject of \cite{lowen2}. The main theorem of the current paper (see also Theorem \ref{maintheorem}) states

\begin{theorem}\label{mainintro}
Consider $c \in HH^2_{\mathrm{ex}}(\aaa)$ and let $\aaa_c$ be the corresponding (first order) deformation of $\aaa$. For $M \in D^b(\aaa)$, the element $\chi_{\aaa}(c)_M \in \Ext_{\aaa}^2(M,M)$ is the obstruction against deforming $M$ to an object of $D^b(\aaa_c)$.
\end{theorem}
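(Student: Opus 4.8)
The plan is to compute both $\chi_{\aaa}(c)_M$ and the deformation obstruction explicitly at the level of complexes of injectives, and to observe that both equal the class of the same ``curvature'' term $c(d,d)$ obtained by feeding the differential of the complex into the Hochschild $2$-cocycle $c$. First I would fix a representative of $M$ by a bounded-below complex $M^{\bullet} = (M^i, d^i)$ of injectives with bounded cohomology, so that $\Ext^{\ast}_{\aaa}(M,M) = H^{\ast}(\mathrm{End}^{\bullet}(M^{\bullet}))$ where $\mathrm{End}^{\bullet}(M^{\bullet})$ carries the differential $[d,-]$. By the equivalence between abelian deformations of $\aaa$ and linear deformations of $\Inj(\aaa)$ from \cite{lowenvandenbergh1}, the first order deformation $\aaa_c$ corresponds to the linear deformation $\Inj(\aaa)_c$ of $\Inj(\aaa)$ whose composition is $g \ast_c f = g\circ f + \epsilon\, c(g,f)$; moreover injectives lift, so that $D^b(\aaa_c)$ is computed by complexes over $\Inj(\aaa)_c$ with the same underlying objects.

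For the obstruction side, I would observe that lifting $M$ to $D^b(\aaa_c)$ amounts to finding a deformed differential $\tilde d = d + \epsilon d_1$ on the same injective objects with $\tilde d \ast_c \tilde d = 0$. Expanding,
$$\tilde d \ast_c \tilde d = \epsilon\bigl( [d, d_1] + c(d,d)\bigr) \pmod{\epsilon^2},$$
so that a lift exists precisely when $c(d,d)$ is a coboundary in $\mathrm{End}^{\bullet}(M^{\bullet})$, and the obstruction is the class $[c(d,d)] \in \Ext^2_{\aaa}(M,M)$. I would then verify that this coincides with the obstruction produced by the general machinery of \cite{lowen2}, i.e.\ that ``deforming $M$'' in the sense of that paper is the same as solving $\tilde d \ast_c \tilde d = 0$ on an injective representative.

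For the characteristic side, I would unwind the definition of $\chi_{\aaa}$ as the projection on the zero part of the Hochschild complex (\S \ref{parproj}), transported to $\Inj(\aaa)$ via the identification $HH^{\ast}_{\mathrm{ex}}(\aaa) \cong HH^{\ast}(\Inj(\aaa))$ of \cite{lowenvandenbergh2}. The key computation is that when a Hochschild cochain $c$ on $\Inj(\aaa)$ is induced up to the dg-enhancement $\ccc$ of $D^b(\aaa)$, the bar-slots get filled by the internal differential $d$ of the object $M^{\bullet}$, so that the zero-bar component of the induced cochain at $M^{\bullet}$ is exactly $c(d,d)$. Hence $\chi_{\aaa}(c)_M = [c(d,d)]$ as well, and the two classes agree; naturality in $M$ then identifies them as elements of the graded centre $\ZZZ^{\ast}(D^b(\aaa))$.

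The main obstacle is precisely this last compatibility: matching, at the cochain level, the self-evaluation $c(d,d)$ coming from the deformed composition $\ast_c$ with the zero-part of the induced Hochschild cochain on the dg-enhancement. This requires tracking the equivalence $HH^{\ast}(\Inj(\aaa)) \cong HH^{\ast}_{\mathrm{ex}}(\aaa)$ through the relevant $B_{\infty}$-structures, keeping careful control of signs, and handling the passage from bounded complexes to the bounded-below injective resolutions needed for arbitrary objects of $D^b(\aaa)$ by a truncation argument. Once the cochain-level identity $\chi_{\aaa}(c)_M = [c(d,d)]$ and its identification with the obstruction are established, the theorem follows.
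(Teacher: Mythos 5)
Your proposal follows essentially the same route as the paper: both sides are identified with the class of the self-evaluation $c(\delta_M,\delta_M)$, the obstruction side by expanding $\tilde d \ast_c \tilde d$ (which is exactly what \cite[Theorems 3.8, 4.1]{lowen2}, cited in the paper's proof, encode) and the characteristic side by filling the bar-slots of $c$ with the differential of the injective resolution. The ``main obstacle'' you single out --- checking that this filling is compatible, at the $B_\infty$-level, with the quasi-isomorphism $HH^{\ast}(\Inj(\aaa)) \cong HH^{\ast}_{\mathrm{ex}}(\aaa)$ --- is precisely what the paper's explicit section $\embr_{\delta}$ of Theorem \ref{anotherembr2} supplies, so once that step is carried out your argument coincides with the proof of Theorem \ref{maintheorem}.
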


Hence, the characteristic morphism $\chi_{\aaa}$ is a natural ingredient in a theory describing the simultaneous deformations of an abelian category together with (families or diagrams of) objects in the abelian (or derived) category. The details of this theory remain to be worked out.

In \cite{lowen2}, \eqref{secondeq} is expressed in terms of complexes of injectives in $\aaa$ and $\bbb$, and the obstructions are expressed in terms of the element $c \in HH^2_{\mathrm{ab}}(\aaa) = HH^2(\Inj(\aaa))$ corresponding to the abelian deformation. Essentially, our approach for proving the above theorem is tightening the relation between $HH^{\ast}_{\mathrm{ab}}(\aaa)$ and $HH^{\ast}_{\mathrm{ex}}(\aaa)$.

For a differential graded category $\AAA$, let $\CC(\AAA)$ denote its Hochschild complex (\cite{keller6}). Let $D_{\mathrm{dg}}^b(\aaa)$ be a dg model of $D^b(\aaa)$ constructed using complexes of injectives. The natural inclusion $\Inj(\aaa) \subset D_{\mathrm{dg}}(\aaa)$ induces a projection morphism
\begin{equation}\label{thirdeq}
\CC(D_{\mathrm{dg}}(\aaa)) \lra \CC(\Inj(\aaa))
\end{equation}
which is proven in \cite{lowenvandenbergh2} to be a quasi-isomorphism of $B_{\infty}$-algebras. The $B_{\infty}$-structure of the Hochschild complexes captures all the operations relevant to deformation theory, like the cup product and the Gerstenhaber bracket, but also the more primitive brace operations (see \S \ref{parhochainf}). 
In \S \ref{parsection}, we explicitely construct a $B_{\infty}$-section
$$\embr_{\delta}: \CC(\Inj(\aaa)) \lra \CC(D_{\mathrm{dg}}(\aaa))$$
of \eqref{thirdeq} (Theorem \ref{anotherembr2}). In the notation, $\delta$ is the element in $\CC^1(D_{\mathrm{dg}}(\aaa))$ determined by the differentials of the complexes of injectives, and $\embr$, short for ``embrace'', refers to the brace operations. More concretely, for $c \in \CC^n(\Inj(\aaa))$, we have
$$\embr_{\delta}(c) = \sum_{m = 0}^nc\{ \delta^{\otimes m} \}$$
After introducing the characteristic morphism in \S \ref{parchar}, we use the morphism $\embr_{\delta}$ to prove Theorem \ref{mainintro} in \S \ref{parcharobs}. 

The morphism  $\embr_{\delta}$ also throws some light on the following question, which is part of a research project in progress:
\begin{question}
Given an abelian deformation $\bbb$ of an abelian category $\aaa$, in which sense can we interpret $D^b(\bbb)$ as a ``derived'' deformation of $D^b(\aaa)$?
\end{question}

More precisely, the morphism $\embr_{\delta}$ gives us a recipe to turn a linear deformation of $\Inj(\aaa)$ (and hence an abelian deformation of $\aaa$) into a deformation of $D_{\mathrm{dg}}^b(\aaa)$... as a \emph{cdg} category! Here cdg, as opposed to dg, means that apart from compositions $m$ and differentials $d$, the category has ``curvature elements'' correcting the fact that $d^2 \neq 0$. This ``small'' alteration has serious consequences, ruining for example the classical cohomology theory.

In Theorem \ref{abelian} we show that the cdg deformation of $D_{\mathrm{dg}}^b(\aaa)$ contains a maximal partial dg deformation which, at least morally, is precisely $D^b_{\mathrm{dg}}(\bbb)$ (see also Remarks \ref{abelianremark} and \ref{boundedremark}). The part of  $D_{\mathrm{dg}}^b(\aaa)$ that gets dg deformed in this way is spanned by the ``zero locus'' of the characteristic element 
$$(M \mapsto \chi_{\aaa}(c)_M) \in \prod_{\Ob(D^b(\aaa))}\Ext^2_{\aaa}(M,M)$$
Hence, an object $M \in D^b(\aaa)$ contributes to the dg deformation of $D^b_{\mathrm{dg}}(\aaa)$ if and only if it deforms, in the sense of \cite{lowen2}, to an object of $D^b(\bbb)$.

\section{$A_{[0,\infty[}$-categories}

$A_{\infty}$-algebras and categories are by now widely used as algebraic models for triangulated categories (see \cite{lyubashenko2}, \cite{hamiltonlazarev}, \cite{keller7}, \cite{kontsevichsoibelman} and the references therein). Although the generalization to the $A_{[0,\infty[}$-setting causes serious new issues, a large part of the theory can still be developed ``in the $A_{\infty}$-spirit''. In this section we try to give a brief, reasonably self contained account of the facts we need. For more detailed accounts we refer the reader to \cite{getzlerjones2}, \cite{lazarev} for $A_{\infty}$-algebras, to \cite{lefevre}, \cite{lyubashenko} for $A_{\infty}$-categories and to \cite{nicolas} for $A_{[0,\infty[}$-algebras.

\subsection{A word on signs and shifts}\label{parsign}

Let $k$ be a commutative ring. All the algebraic constructions in this paper take place in and around the category $G(k)$ of $\Z$-graded $k$-modules. For $M, N \in G(k)$, we have the familiar tensor product 
$$(M \otimes N)^n = \oplus_{i \in \Z}M^i \otimes N^{n-i}$$
and internal hom 
$$[M,N]^p = \prod_{i \in \Z}\Hom(M^i, M^{i+p})$$
over $k$. For $m \in M^i$, the \emph{degree} of $m$ is $|m| = i$. We adopt the \emph{Koszul sign convention}, i.e. $G(k)$ is endowed with the well known closed tensor structure with ``super'' commutativity isomorphisms
\begin{equation}\label{eqnsupercom}
M \otimes N \lra N \otimes M: m \otimes n \longmapsto (-1)^{|m| |n|}n \otimes m
\end{equation}
and the standard associativity and identity isomorphisms. The closed structure is determined by the evaluation morphism
$$[M,N] \otimes M \lra N: (f,m) \longmapsto f(m)$$
Furthermore, we make a choice of shift functors on $G(k)$. For $i \in \Z$, let $\Sigma^ik \in G(k)$ be the object whose only nonzero component is $(\Sigma^ik)^{-i} = k$.
The \emph{shift functors} are the functors
$$\Sigma^i = \Sigma^ik \otimes - : G(k) \lra G(k): M \longmapsto \Sigma^iM = \Sigma^ik \otimes M$$
For $m \in M$, we put $\sigma^im = 1 \otimes m \in \Sigma^iM$.
All the canonical isomorphisms (and in particular the signs) in this paper are derived from the above conventions. The most general canonical isomorphisms we will use are of the form, for  $M_1, \dots , M_n, M \in G(k)$: 
\begin{equation}\label{eqnmix}
 \varphi : \Sigma^{i - i_1 - \dots - i_n}[M_1 \otimes \dots \otimes M_n, M] \lra [\Sigma^{i_1}M_1 \otimes \dots \otimes \Sigma^{i_n}M_n, \Sigma^{i} M]
 \end{equation}
 defined by $\varphi(\sigma^{i - i_1 - \dots - i_n}\phi)(\sigma^{i_1}m_1, \dots ,\sigma^{i_n}m_n) = (-1)^{\alpha}\sigma^{i}\phi(m_1, \dots, m_n)$
 where $$\alpha = {(i_1 + \dots + i_n)|\phi| + i_2|m_1| + \dots + i_n(|m_1| + \dots + |m_{n-1}|)}$$

\subsection{The Hochschild object of a (graded) quiver}\label{parquiver}

In this section and the next one we will introduce the Hochschild complex of an $A_{[0,\infty[}$-category (see also \cite{getzlerjones2}, \cite{lazarev}) in two steps. Our purpose is to distiguish between the part of the structure that comes from the $A_{[0,\infty[}$-structure (next section) and the part that does not (this section). This will be useful later on when we will transport $A_{[0,\infty[}$-structures.

Let $k$ be a commutative ring. A \emph{graded $k$-quiver} is a quiver enriched in the category $G(k)$. More precisely, a graded $k$-quiver $\AAA$ consists of a set of objects $\Ob(\AAA)$ and for $A, A' \in \Ob(\AAA)$, a graded object $\AAA(A,A') \in G(k)$. Since we will only use \emph{graded $k$}-quivers in this paper, we will systematically call them simply \emph{quivers}. The category of quivers with a fixed set of objects admits a tensor product
$$\AAA \otimes \BBB(A, A') = \oplus_{A''} \AAA(A'', A') \otimes \BBB(A, A')$$ and an internal hom
$$[\AAA, \BBB](A,A') = [\AAA(A,A'), \BBB(A,A')]$$
We put $[\AAA, \BBB] = \prod_{A,A'}[\AAA,\BBB](A,A')$. A \emph{morphism of degree $p$ from $\AAA$ to $\BBB$} is by definition an element of $[\AAA, \BBB]^p$. 
The \emph{tensor cocategory} $T(\AAA)$ of a quiver $\AAA$ is the quiver
$$T(\AAA) = \oplus_{n \geq 0} \AAA^{\otimes n}$$
equiped with the comultiplication $\Delta: T(\AAA) \lra T(\AAA) \otimes T(\AAA)$ which separates tensors.
There are natural notions of morphisms and of coderivations between cocategories and there is a $G(k)$-isomorphism
$$[T(\AAA), \AAA] \cong \Coder(T(\AAA), T(\AAA))$$
The object $[T(\AAA), \AAA]$ is naturally a brace algebra. We recall the definition.

\begin{definition}\label{bracedef} (see also \cite{gerstenhabervoronov})
For $V \in G(k)$, the structure of \emph{brace algebra} on $V$ consists in the datum of (degree zero) operations
$$V^{\otimes n+1} \lra V: (x,x_1, \dots x_n) \longmapsto x\{ x_1, \dots x_n\}$$
satisfying the relation $x\{x_1, \dots x_m\}\{y_1, \dots y_n\} =$
$$\sum (-1)^{\alpha}x\{y_1, \dots, x_1\{ y_{i_1}, \dots\} ,y_{j_1} \dots, x_m\{y_{i_m} , \dots\}, y_{j_m}, \dots y_n\}$$
where $\alpha = \sum_{k = 1}^m |x_k|\sum_{l =1}^{i_k - 1}|y_l|$. The \emph{associated Lie bracket} of a brace algebra is
$$\langle x,y \rangle = x\{y\} - (-1)^{|x||y|} y\{x\}$$ 
A \emph{brace algebra morphism} (between two brace algebras) is a graded morphism preserving all the individual brace operations.
\end{definition}

\begin{proposition}
Let $V$ be a brace algebra. The tensor coalgebra $T(V)$ naturally becomes a (graded) bialgebra with the associative multiplication $M: T(V) \otimes T(V) \lra T(V)$ defined by the compositions
$$M_{k,l}: V^{\otimes k} \otimes V^{\otimes l} \lra T(V) \otimes T(V) \lra T(V) \lra V$$
with $$M_{1,l}(x; x_1, \dots x_l) = x\{x_1, \dots, x_l\}$$
and all other components equal to zero. The unit for the multiplication is $1 \in k = V^{\otimes 0}$.
\end{proposition}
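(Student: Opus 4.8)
The plan is to exploit that $(T(V),\Delta)$ is the cofree conilpotent coaugmented coalgebra cogenerated by $V$, so that the entire statement reduces to identities between corestrictions. Recall the universal property: for any coaugmented conilpotent coalgebra $C$, composition with the projection $\pi:T(V)\to V$ onto the weight-one part sets up a bijection between morphisms of coaugmented coalgebras $C\to T(V)$ and linear maps $C\to V$ vanishing on the coaugmentation, a corestriction $g$ being extended to $\eta\varepsilon+\sum_{n\geq 1}g^{\otimes n}\bar\Delta^{(n)}$. Since a tensor product of conilpotent coalgebras is again conilpotent, this applies to $C=T(V)\otimes T(V)$, coaugmented by $1\otimes 1$. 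I would therefore \emph{define} $M$ to be the coalgebra morphism whose corestriction $\mu=\pi M$ has the prescribed components $M_{k,l}=\pi M|_{V^{\otimes k}\otimes V^{\otimes l}}$: namely $\mu_{1,l}(x;x_1,\dots,x_l)=x\{x_1,\dots,x_l\}$ (so in particular $\mu_{1,0}=\mathrm{id}$ because $x\{\,\}=x$), together with the normalisation $\mu_{0,1}=\mathrm{id}$ forced by requiring $1=V^{\otimes 0}$ to be a left unit, all remaining components being zero. The payoff of this definition is that $M$ is a coalgebra morphism \emph{by construction}; as $\Delta$ is the (coassociative, counital) deconcatenation coproduct, this is precisely the bialgebra compatibility axiom, equivalently the multiplicativity of $\Delta$. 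Hence once $M$ is shown associative and unital we are done.

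Unitality I would settle first, as it is purely formal. Because $1\otimes 1$ is grouplike, both $M(-\otimes 1)$ and $M(1\otimes -)$ are endomorphisms of the coalgebra $T(V)$, as is $\mathrm{id}_{T(V)}$; by the universal property it suffices to compare corestrictions. One computes $\pi M(w\otimes 1)=\mu(w\otimes 1)=\pi w$, only the component $\mu_{1,0}=\mathrm{id}$ surviving, and $\pi M(1\otimes w)=\mu(1\otimes w)=\pi w$ via $\mu_{0,1}=\mathrm{id}$; both equal the corestriction of $\mathrm{id}_{T(V)}$, whence $M(-\otimes 1)=\mathrm{id}=M(1\otimes -)$.

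For associativity, the two maps $M(M\otimes 1)$ and $M(1\otimes M):T(V)^{\otimes 3}\to T(V)$ are again morphisms of coaugmented coalgebras, so they coincide iff their corestrictions do. Evaluating on $\bar x\otimes\bar y\otimes\bar z\in V^{\otimes k}\otimes V^{\otimes l}\otimes V^{\otimes m}$, the outer $\mu$ sees only the weight-one part of its first argument, namely $\pi M(\bar x\otimes\bar y)=\mu(\bar x\otimes\bar y)$, so both corestrictions vanish unless $k=1$, say $\bar x=x\in V$. There the left-hand corestriction collapses to $(x\{y_1,\dots,y_l\})\{z_1,\dots,z_m\}$, while the right-hand one becomes $x\{\,M(\bar y\otimes\bar z)\,\}$, the brace of $x$ applied to all tensor components of the product $M(\bar y\otimes\bar z)$. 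It remains to expand $M(\bar y\otimes\bar z)$ explicitly from the corestriction formula: unravelling $\eta\varepsilon+\sum_n\mu^{\otimes n}\bar\Delta^{(n)}$ against the deconcatenation coproduct on $T(V)^{\otimes 2}$ exhibits $M(\bar y\otimes\bar z)$ as the signed sum over all ways of shuffling the $y_i$ into the sequence of $z_j$ (both kept in their order), each $y_i$ absorbing via a brace the consecutive block of $z$'s it governs while the remaining letters stay free (these free letters being produced precisely by the component $\mu_{0,1}=\mathrm{id}$). Feeding this into $x\{-\}$ reproduces term by term the right-hand side of the brace relation of Definition \ref{bracedef}, so that associativity is \emph{equivalent} to that relation, which holds by hypothesis.

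The main obstacle is this last matching: the Koszul signs generated in the two independent bookkeepings must be shown to agree. On one side the sign arises from commuting the $\mu$'s past the tensor factors in $\mu^{\otimes n}\bar\Delta^{(n)}$ when computing $M(\bar y\otimes\bar z)$ and then inserting the result into $x\{-\}$; on the other it is the exponent $\alpha=\sum_k|x_k|\sum_{l<i_k}|y_l|$ appearing in Definition \ref{bracedef}. Verifying that these coincide is the one genuinely computational point, and is where the conventions of \S\ref{parsign}, in particular the isomorphism $\varphi$ and the Koszul rule, must be tracked with care; everything else is a formal consequence of cofreeness.
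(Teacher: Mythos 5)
Your argument is correct and follows essentially the same route as the paper, whose proof is a one-line appeal to the standard fact (citing Getzler--Jones) that a coalgebra morphism $M$ is uniquely determined by its corestriction components $M_{k,l}$ and that the brace-algebra axioms translate into the associativity of $M$. Your elaboration via the cofreeness of $T(V)$ --- including the observation that $M_{0,1}=\mathrm{id}$ must be adjoined for $1\in V^{\otimes 0}$ to be a left unit, and the identification of the sign-matching in the brace relation as the sole computational point --- supplies exactly the details the paper delegates to the reference.
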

\begin{proof} This is standard (see \cite{getzlerjones}). A coalgebra morphism $M$ is uniquely determined by the components $M_{k,l}$ and the brace algebra axioms translate into the associativity of $M$.
\end{proof}

Put $$[T(\AAA), \AAA]_n = [\AAA^{\otimes n}, \AAA] = \prod_{A_0, \dots, A_n \in \AAA} [\AAA(A_{n-1},A_n) \otimes \dots \otimes \AAA(A_0, A_1), \AAA(A_0, A_n)]$$
The brace algebra structure on $[T(\AAA), \AAA] =\prod_{n\geq 0} [T(\AAA), \AAA]_n$is given by the operations
$$[T(\AAA),\AAA)]_n \otimes [T(\AAA),\AAA)]_{n_1} \otimes \dots \otimes [T(\AAA),\AAA)]_{n_k} \lra [T(\AAA),\AAA)]_{n-k + n_1 + \dots + n_k}$$
with
$$\phi\{\phi_1, \dots \phi_n\} = \sum \phi(1 \otimes \dots \otimes \phi_1 \otimes 1 \otimes \dots \otimes \phi_n \otimes 1 \otimes \dots \otimes 1)$$ 
The associated Lie bracket corresponds to the commutator of coderivations.
We put $B\AAA = T(\Sigma \AAA)$ and $\CC_{br}(\AAA) = [T(\Sigma\AAA), \Sigma\AAA] = [B\AAA, \Sigma \AAA]$.
Summarizing, the quiver $\AAA$ yields towering layers of (graded) algebraic structure:
\begin{enumerate}
\item[(0)] the \emph{quiver} $\AAA$, i.e. the graded objects $\AAA(A,A')$;
\item[(1)] the \emph{cocategory} $B\AAA = T(\Sigma \AAA)$;
\item[(2)] the \emph{brace algebra} $\CC_{br}(\AAA) = [B\AAA, \Sigma\AAA] \cong \Coder(B\AAA, B\AAA)$ which is in particular a \emph{Lie algebra} and
\item[(0')] the associated \emph{Hochschild object} $\CC(\AAA) = \Sigma^{-1}\CC_{br}(\AAA)$;
\item[(1')] the \emph{bialgebra} $T(\CC_{br}(\AAA)) = B\CC(\AAA)$;
\end{enumerate}
There is a natural inclusion
\begin{equation}\label{eqincl}
\CC_{br}(\AAA) \lra T(\CC_{br}(\AAA) = B\CC(\AAA)
\end{equation} of (2) into (1').

\subsection{The Hochschild complex of an $A_{[0,\infty[}$-category}\label{parhochainf}

\begin{definition} (\cite{getzlerjones2})
Let $\AAA$ be a quiver. An \emph{$A_{[0,\infty[}$-structure} on $\AAA$ is an element 
$b \in \CC_{br}^1(\AAA)$ satisfying
\begin{equation}\label{eqbsquare}
b\{b\} = 0
\end{equation}
The morphisms $$b_n: \Sigma \AAA^{\otimes n} \lra \Sigma \AAA$$
defining $b$ are sometimes called \emph{(Taylor) coefficients} of $b$.
The couple $(\AAA, b)$ is called an \emph{$A_{[0,\infty]}$-category}. If $b_0 = 0$, it is called an \emph{$A_{\infty}$-category}. If $b_n = 0$ for $n \geq 3$, it is called a \emph{cdg category}. If  $b_0 = 0$ and $b_n = 0$ for $n \geq 3$, it is called a \emph{dg category}.
\end{definition}

\begin{remark}\label{remainfty}
Consider $b \in \CC_{br}^1(\AAA)$.
\begin{enumerate}
\item The equation (\ref{eqbsquare}) can be written out completely in terms of the coefficients $b_n$ of $b$ (\cite{lefevre}, \cite{nicolas}).
\item If we consider $b$ as a coderivation inside $[B\AAA, B\AAA]^1$, then (\ref{eqbsquare}) is equivalent to $$b^2 = b \circ b = 0$$
\item If we consider $b$ as an element of the bialgebra $B\CC(\AAA)$ through (\ref{eqincl}), then (\ref{eqbsquare}) is equivalent to
\begin{equation}
b^2 = M(b,b) = 0
\end{equation}
where $M$ is the multiplication of $B\CC(\AAA)$.
\end{enumerate}
\end{remark}

The easiest morphisms to consider between $A_{[0,\infty[}$-categories are those with a fixed set of objects. To capture more general morphisms, one could follow the approach of \cite{lefevre} for $A_{\infty}$-categories.

\begin{definition} (\cite{getzlerjones2})
Consider $A_{[0,\infty[}$-categories $(\AAA,b)$ and $(\AAA',b')$ with $\Ob(\AAA) = \Ob(\AAA')$. A \emph{(fixed object) morphisms of $A_{[0,\infty[}$-categories} is a (fixed object) morphism of differential graded cocategories $f: B\AAA \lra B\AAA'$ (i.e. a morphism of quivers preserving the comultiplication and the differential). It is determined by morphisms
$$f_n: (\Sigma \AAA)^{\otimes n} \lra \Sigma \AAA'$$
for $n \geq 0$.
\end{definition}

An $A_{[0,\infty[}$-structure on $\AAA$ introduces a load of additional algebraic structure on the tower of \S \ref{parquiver}.
The \emph{Hochschild differential} on $\CC_{br}(\AAA)$ associated to $b$ is given by
$$d = \langle b,-\rangle \in [\CC_{br}(\AAA), \CC_{br}(\AAA)]^1: \phi \lra \langle b,\phi \rangle$$
and makes $\CC_{br}(\AAA)$ into a dg Lie algebra. The complex $\Sigma^{-1}\CC_{br}(\AAA)$ is (isomorphic to) the classical Hochschild complex of $\AAA$.
Similarly, considering $b \in B\CC(\AAA)^1$, we define a differential 
$$D = [b,-]_M \in [B\CC(\AAA),B\CC(\AAA)]^1: \phi \longmapsto [b,\phi]_M$$
where $[-,-]_M$ denotes the commutator of the multiplication $M$ determined by the brace operations. As $D$ is a coderivation, it defines an $A_{[0,\infty[}$-structure on $\CC(\AAA)$. Let us examine the coefficients
$$D_n: \Sigma \CC(\AAA)^{\otimes n} \lra \Sigma \CC(\AAA)$$
of this $A_{[0,\infty[}$-structure.
By definition,
$D_n(\phi_1, \dots, \phi_n) = M_{1,n}(b; \phi_1, \dots, \phi_n) - M_{n,1}(\phi_1, \dots, \phi_n; b)$, so for $n =1$ we have $$D_1(\phi) = \langle b, \phi \rangle$$ whereas for $n > 1$ we have
$$D_n(\phi_1, \dots, \phi_n) = b\{\phi_1, \dots, \phi_n\}$$
The differential $D$ makes $B\CC(\AAA)$ into a dg bialgebra. By definition, this makes $\CC(\AAA)$ into a \emph{$B_{\infty}$-algebra} \cite{getzlerjones}.
Summarizing, we obtain the following tower:
\begin{enumerate}
\item[(0)] the \emph{$A_{[0,\infty[}$-category} $\AAA$;
\item[(1)] the \emph{dg cocategory} $B\AAA = T(\Sigma\AAA)$;
\item[(2)] the \emph{$B_{\infty}$-algebra} $\CC_{br}(\AAA) = [B\AAA, \Sigma\AAA] \cong \Coder(B\AAA, B\AAA)$ which is in particular a \emph{dg Lie algebra} and
\item[(0')] the associated \emph{Hochschild complex} $\CC(\AAA) = \Sigma^{-1}\CC_{br}(\AAA)$;
\item[(1')] the \emph{dg bialgebra} $T(\CC_{br}(\AAA)) = B\CC(\AAA)$;
\end{enumerate}

By a \emph{$B_{\infty}$-morphism} (between $B_{\infty}$-algebras $B_1$ and $B_2$) we will always mean a graded morphism (super)commuting with all the individual operations on $B_1$ and $B_2$. A $B_{\infty}$-morphism is a brace algebra morphism, and a very particular case of a morphism of $A_{\infty}$-algebras.

\subsection{Limited functoriality}\label{limfunct}
The following tautological proposition will be used later on to transfer Hochschild cochains:
\begin{proposition}\label{transfer}
Consider quivers $\AAA$ and $\BBB$ and a brace algebra morphism $\Psi: \CC_{br}(\AAA) \lra \CC_{br}(\BBB)$. If $b$ is an $A_{[0,\infty[}$-structure on $\AAA$, then $\Psi(b)$ is an $A_{[0,\infty[}$-structure on $\BBB$ and
$$\Psi: \CC_{br}(\AAA, b) \lra \CC_{br}(\BBB, \Psi(b))$$ is a $B_{\infty}$-morphism.
\end{proposition}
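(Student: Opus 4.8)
The statement is labelled tautological, and indeed the plan is simply to unwind each clause against the tower of structures assembled in \S\ref{parquiver} and \S\ref{parhochainf}, the guiding observation being that \emph{every} piece of the $B_\infty$-structure except the differential is built from the brace operations in a way that does not involve $b$, while the differential depends on $b$ in a manifestly natural way. First I would record that a brace algebra morphism is, by Definition \ref{bracedef}, a graded morphism of degree zero that preserves each individual brace operation. Hence, since $b \in \CC_{br}^1(\AAA)$, we have $\Psi(b) \in \CC_{br}^1(\BBB)$, so $\Psi(b)$ sits in the correct degree to be an $A_{[0,\infty[}$-structure. Applying $\Psi$ to the unary brace and using preservation of braces gives
$$\Psi(b)\{\Psi(b)\} = \Psi(b\{b\}) = \Psi(0) = 0,$$
so $\Psi(b)$ satisfies \eqref{eqbsquare}, establishing the first assertion.

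For the second assertion I would pass to the dg bialgebra picture. Recall that the $B_\infty$-algebra $\CC(\AAA,b)$ is encoded by $B\CC(\AAA) = T(\CC_{br}(\AAA))$, whose multiplication $M$ has only the nonzero components $M_{1,l}(x;x_1,\dots,x_l) = x\{x_1,\dots,x_l\}$ and is therefore determined by the braces alone (in particular independent of $b$), and whose differential is $D = [b,-]_M$. The plan is to show that the strict map $T(\Psi)\colon B\CC(\AAA) \lra B\CC(\BBB)$ obtained by applying $\Psi$ in each tensor slot is a morphism of dg bialgebras. Being applied slotwise, $T(\Psi)$ automatically commutes with the deconcatenation comultiplication; and because $\Psi$ preserves braces and has degree zero, it commutes with $M$ (no Koszul signs intervene, $\Psi$ being of degree zero). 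Thus $T(\Psi)$ is a bialgebra morphism, and it only remains to verify that it intertwines the two differentials.

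This last point is the sole computation, and it again reduces to brace-preservation. Since the bracket $\langle -,-\rangle$ is defined from the braces in Definition \ref{bracedef} and $\Psi$ is of degree zero (so $|\Psi(b)| = |b|$ and $|\Psi(\phi)| = |\phi|$), expanding and reassembling gives
$$\Psi(\langle b, \phi\rangle) = \langle \Psi(b), \Psi(\phi)\rangle,$$
which is exactly the statement that $\Psi$ commutes with $D_1 = \langle b,-\rangle$; for $n > 1$ the identity $\Psi(b\{\phi_1,\dots,\phi_n\}) = \Psi(b)\{\Psi(\phi_1),\dots,\Psi(\phi_n)\}$ shows likewise that $\Psi$ commutes with $D_n$. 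Hence $T(\Psi)$ intertwines $D^{\AAA}$ and $D^{\BBB}$, so it is a morphism of dg bialgebras and $\Psi$ is the asserted $B_\infty$-morphism. I do not expect a genuine obstacle: the only thing one must be careful about is to separate cleanly the $b$-independent bialgebra multiplication (preserved because $\Psi$ respects braces) from the single $b$-dependent datum, the differential, which transforms naturally under any brace-preserving map carrying $b$ to $\Psi(b)$.
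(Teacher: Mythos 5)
Your proof is correct and follows essentially the same route as the paper: the paper's own (one-line) argument is precisely that $T(\Psi)([b,\phi]) = [\Psi(b),T(\Psi)(\phi)]$ because $\Psi$ preserves the brace multiplication $M$ on $B\CC$, which is exactly your decomposition into the $b$-independent bialgebra structure and the $b$-dependent differential $D=[b,-]_M$. You merely spell out the details (including the verification $\Psi(b)\{\Psi(b)\}=\Psi(b\{b\})=0$, which the paper leaves implicit).
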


\begin{proof}
For $\phi \in T(\CC_{br}(\AAA, b))$, we are to show that $T(\Psi)([b,\phi]) = [\Psi(b), T(\Psi)(\phi)]$, which immediately follows from the fact that $\Psi$ preserves the brace multiplication.
\end{proof}

Let $\BBB \subset \AAA$ be the inclusion of a full subquiver, i.e. $\Ob(\BBB) \subset \Ob(\AAA)$ and $\BBB(B,B') = \AAA(B,B')$. Using the induced $B\BBB \lra B\AAA$, there is a canonical restriction brace algebra morphism
$$\pi_{\BBB}: \CC_{br}(\AAA) \lra \CC_{br}(\BBB)$$
If $(\AAA,b)$ is an $A_{[0,\infty[}$-category, $\BBB$ can be endowed with the induced $A_{[0,\infty[}$-structure $\pi_{\BBB}(b)$ and $\pi_{\BBB}$ becomes a $B_{\infty}$-morphism (see Proposition \ref{transfer}). The $A_{[0,\infty[}$-category $(\BBB, \pi_{\BBB}(b))$ is called a \emph{full $A_{[0,\infty[}$-subcategory} of $\AAA$.
In particular, for every object $A \in \AAA$, $(\AAA(A,A), \pi_A(b))$ is an $A_{[0,\infty[}$-algebra.

\subsection{Projection on the zero part}\label{parproj}
Let $\AAA$ be an $A_{[0,\infty[}$-category. By the \emph{zero part} of $\CC_{br}(\AAA)$ we mean
$$\CC_{br}(\AAA)_0 = [T(\Sigma \AAA), \Sigma \AAA]_0 = \prod_{A \in \AAA} \Sigma \AAA(A,A)$$
Consider the graded morphisms $$\pi_0: \CC_{br}(\AAA) \lra \CC_{br}(\AAA)_0$$
and $\sigma_0: \CC_{br}(\AAA)_0 \lra \CC_{br}(\AAA)$. The morphism
$$b_1 \in [\Sigma \AAA, \Sigma \AAA]^1 = \prod_{A, A' \in \AAA}[\Sigma \AAA(A,A'), \Sigma \AAA(A,A')]^1$$
determines degree 1 morphisms $(b_1)_A: \Sigma \AAA(A,A) \lra \Sigma \AAA(A,A)$ and a product morphism
$$b_1^{\Delta}: \CC_{br}(\AAA)_0 \lra \CC_{br}(\AAA)_0$$
Let $d: \CC_{br}(\AAA) \lra \CC_{br}(\AAA)$ be the Hochschild differential.

\begin{proposition}\label{projzero}
We have $$b_1^{\Delta} = \pi_0 d \sigma_0$$
In particular, if $\AAA$ is an $A_{\infty}$-category,  $\pi_0: (\CC_{br}(\AAA),d) \lra (\CC_{br}(\AAA)_0, b_1^{\Delta})$ is a morphism of differential graded objects.
\end{proposition}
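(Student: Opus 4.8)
The plan is to unwind all the definitions and reduce the claimed identity $b_1^{\Delta} = \pi_0 d \sigma_0$ to the single observation that the zero part $\CC_{br}(\AAA)_0 = \prod_{A} \Sigma\AAA(A,A)$ consists of those cochains concentrated in tensor-length $0$, on which the only surviving contribution of the Hochschild differential is the one coming from the linear coefficient $b_1$. Recall that the Hochschild differential is $d = \langle b, - \rangle$, so for $\phi \in \CC_{br}(\AAA)_0$ (regarded via $\sigma_0$ as an element of $\CC_{br}(\AAA)_0 \subset \CC_{br}(\AAA)$ of internal tensor-length zero) we have $d(\sigma_0 \phi) = \langle b, \sigma_0\phi\rangle = b\{\sigma_0\phi\} - (-1)^{|b||\phi|}(\sigma_0\phi)\{b\}$, and we then apply $\pi_0$.

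First I would analyze the two brace terms separately. In $b\{\sigma_0\phi\}$, the cochain $\sigma_0\phi$ gets inserted into the various inputs of $b = \sum_n b_n$; but $\sigma_0\phi$ has tensor-length $0$ (it is an element of $\prod_A \Sigma\AAA(A,A)$, i.e. a $0$-ary operation, a ``curvature-type'' element), so inserting it into $b_n$ lowers the arity by one, producing terms of various lengths. Projecting back with $\pi_0$ keeps only the length-zero output; by the arity bookkeeping this forces exactly one insertion into $b_1$ (inserting into $b_n$ for $n\neq 1$ cannot yield a length-zero result of the form landing in $\prod_A \Sigma\AAA(A,A)$, and insertions into $b_0$ contribute to other components). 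This singles out precisely $(b_1)_A$ applied in each component $A$, which is the definition of $b_1^{\Delta}$. For the second term $(\sigma_0\phi)\{b\}$, since $\sigma_0\phi$ has arity $0$ it admits no nonzero brace of the form $(\sigma_0\phi)\{b\}$ (a $0$-ary operation has no slots into which to brace), so this term vanishes. Hence $\pi_0 d \sigma_0 (\phi) = b_1^{\Delta}(\phi)$, which is the desired equality.

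The sign is the step I expect to require the most care. I would track it through the Koszul conventions of \S\ref{parsign}: the bracket definition in Definition \ref{bracedef} carries the sign $(-1)^{|x||y|}$, and further signs appear from the shift $\Sigma$ and from the canonical isomorphism \eqref{eqnmix} identifying shifted internal-homs. The claim $b_1^{\Delta} = \pi_0 d \sigma_0$ asserts these signs all conspire to give exactly $(b_1)_A$ with no extra sign, which I would verify by a direct degree computation on a homogeneous $\phi \in \Sigma\AAA(A,A)$; the potential subtlety is that $|b|=1$ so the cross-term sign $(-1)^{|b||\phi|}$ is nontrivial, but since that whole cross-term vanishes for arity reasons it does not affect the final sign.

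Finally, for the ``in particular'' clause, I would invoke the $A_{\infty}$-hypothesis $b_0 = 0$. Since $b_0 = 0$, the operator $b_1^{\Delta}$ (which only involves $b_1$) squares to zero as a consequence of the $A_{\infty}$-relation $b\{b\}=0$ restricted to its linear part, namely $b_1 b_1 = 0$ once the $b_0$-contribution is removed; thus $(\CC_{br}(\AAA)_0, b_1^{\Delta})$ is genuinely a complex. Combined with the identity just proven, $\pi_0$ intertwines the differentials $d$ and $b_1^{\Delta}$ and is therefore a morphism of differential graded objects, as claimed. The role of the $A_{\infty}$-assumption is precisely to guarantee $b_1^2 = 0$ so that the target is a differential graded object rather than merely a graded object with a degree-one endomorphism.
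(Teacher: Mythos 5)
Your proof of the identity $b_1^{\Delta} = \pi_0 d \sigma_0$ is correct and is in substance the same as the paper's (one-line) argument: for $x \in \Sigma\AAA(A,A)$ the term $x\{b\}$ vanishes because a $0$-ary cochain has no insertion slots, and in $b\{x\}$ the arity count $\arit(b_n\{x\}) = n-1$ forces $n=1$ after applying $\pi_0$, leaving $b_1\{x\} = b_1(x)$. Your sign discussion is also consistent with the conventions of \S 2.1.

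There is, however, a gap in your deduction of the ``in particular'' clause. Being a morphism of differential graded objects means $\pi_0 \circ d = b_1^{\Delta}\circ \pi_0$ on \emph{all} of $\CC_{br}(\AAA)$, and this does not follow from $\pi_0 d \sigma_0 = b_1^{\Delta}$ alone: one must also check that $\pi_0 d$ kills every cochain of arity $\geq 1$, i.e.\ that no positive-arity component of $\phi$ contributes to the arity-zero part of $\langle b,\phi\rangle$. The same arity bookkeeping you used shows that the only such potential contribution is $\pm\,\phi_1\{b_0\}$ coming from the term $\phi\{b\}$ of the bracket (since $\arit(\phi_n\{b_m\}) = n-1+m = 0$ forces $(n,m)=(1,0)$ up to the trivially vanishing case $(0,1)$). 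This term dies precisely because $b_0 = 0$ in the $A_{\infty}$ case. So the $A_{\infty}$ hypothesis is doing double duty: it kills the cross-term $\phi_1\{b_0\}$ so that $\pi_0$ actually intertwines the differentials, \emph{and} it gives $(b_1^{\Delta})^2 = 0$ via the arity-one component $b_1\{b_1\} + b_2\{b_0\} = 0$ of $b\{b\}=0$, so that the target is a complex. Your write-up attributes the hypothesis solely to the second point and asserts the intertwining as an immediate consequence of the proven identity, which it is not; adding the one-line check above closes the gap.
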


\begin{proof}
For an element $x \in \Sigma \AAA(A,A)$, we have $\pi_A(d(x)) = \langle b,x \rangle = b_1\{x\} = b_1(x)$.
\end{proof}

\subsection{From $\Sigma \AAA$ to $\AAA$}

The Hochschild complex of $\AAA$ and the $B_{\infty}$-structure on $\Sigma \CC(\AAA)$ are often expressed in terms of $\AAA$ rather than $\Sigma\AAA$. This can be done using the canonical isomorphisms
\begin{equation}\label{eqnbm}
\xymatrix{{\Sigma^{1-n}[\AAA(A_{n-1},A_n) \otimes \dots \otimes \AAA(A_0,A_1), \AAA(A_0,A_n)]} \ar[d] \\
{[\Sigma\AAA(A_{n-1},A_n) \otimes \dots \otimes \Sigma\AAA(A_0,A_1), \Sigma\AAA(A_0,A_n)]}}
\end{equation}
determined by the conventions of \S \ref{parsign}, thus introducing a lot of signs.
We define the bigraded object $\CC(\AAA)$ by
$$\CC^{i,n}(\AAA) = \prod_{A_0, \dots A_n} [\AAA(A_{n-1},A_n) \otimes \dots \otimes \AAA(A_0,A_1), \AAA(A_0,A_n)]^{i}$$
An element $\phi \in \CC^{i,n}$ has \emph{degree} $|\phi| = i$, \emph{arity} $\arit(\phi) = n$ and \emph{Hochschild degree} $\degr(\phi) = i + n$. We put $\CC^p(\AAA) = \prod_{i + n = p}\CC^{i,n}(\AAA)$.
The $B_{\infty}$-stucture of $\CC_{br}(\AAA)$ is translated in terms of operations on $\CC(\AAA)$ through (\ref{eqnbm}). The complex $\CC(\AAA)$ will also be called the Hochschild complex of $\AAA$ and its elements are called Hochschild cochains. For a Hochschild cochain
$\phi \in \CC^{i,n}(\AAA)$, the corresponding element of $\CC_{br}(\AAA)$ has
$$\sigma^{1-n}(\phi)(\sigma f_n, \dots , \sigma f_1) = (-1)^{ni + (n-1)|f_n| + \dots + |f_2|}\sigma\phi(f_n, \dots , f_1)$$
This identification is different from other ones, used for example in \cite{getzlerjones}, \cite{lefevre}, \cite{nicolas}. Nevertheless, it allows us to recover many standard constructions (up to minor modifications).
For example, the operation
$$\dott: \CC_{br}(\AAA)_n \otimes \CC_{br}(\AAA)_m \lra \CC_{br}(\AAA)_{n + m - 1}: (x,y) \longmapsto x\{y\}$$
gives rise to the classical ``dot product''
$$\bullet: \CC^{i,n}(\AAA) \otimes \CC^{j,m}(\AAA) \lra \CC^{i + j, n + m - 1}$$
on $\CC(\AAA)$ given by
\begin{equation}\label{eqnexpr}
\phi \bullet \psi = \sum_{k=0}^{n-1} (-1)^{\epsilon}\phi(1^{\otimes n-k-1} \otimes \psi \otimes 1^{\otimes k})
\end{equation}
where $$\epsilon = (\deg(\phi) + k + 1)(\arit(\psi)+1)$$

In the sequel, when no confusion arises, we will not distinguish in notation between the operations on $\CC_{br}(\AAA)$ and the induced operations on $\CC(\AAA)$. In particular, the brace operations will always be denoted using the symbols $\{$ and $\}$. An $A_{[0,\infty[}$-stucture $b \in \CC^1_{br}(\AAA)$ on $\AAA$ will often be translated into an element $\mu \in \CC^2(\AAA)$, which will also be called an $A_{[0,\infty[}$-stucture on $\AAA$. Similarly, we will speak about brace algebra and $B_{\infty}$-morphisms between Hochschild complexes $\CC(\AAA)$, $\CC(\AAA')$.

One proves:

\begin{lemma}\label{lembr}
Consider $\phi \in \CC^{i,n}(\AAA)$ and $\delta \in \CC^{j,0}(\AAA)$. 
We have $$\phi\{ \delta^{\otimes n} \} = (-1)^{n(i + ((n-1)/2)j)}\phi(\delta, \dots, \delta)$$
\end{lemma}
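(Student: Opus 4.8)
The plan is to lift the computation to the shifted complex $\CC_{br}(\AAA) = [T(\Sigma\AAA),\Sigma\AAA]$, where the brace is the explicit operation of \S\ref{parquiver}, and then transport the result back through the sign conventions of \S\ref{parsign}. Write $\tilde{\phi} = \sigma^{1-n}(\phi)$ and $\tilde{\delta} = \sigma\delta$ for the elements of $\CC_{br}(\AAA)$ representing $\phi$ and $\delta$; since $\arit(\delta) = 0$ the identification reduces to $\delta \leftrightarrow \sigma\delta$, so $|\tilde{\delta}| = j-1$. By our conventions $\phi\{\delta^{\otimes n}\}$ denotes the transport of the brace $\tilde{\phi}\{\tilde{\delta}^{\otimes n}\}$ evaluated in $\CC_{br}(\AAA)$.

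First I would check that this brace collapses to a single term carrying no auxiliary sign. Since $\tilde{\phi}$ has arity $n$ and each of the $n$ inserted cochains $\tilde{\delta}$ has arity $0$, in the defining sum $\tilde{\phi}(1^{\otimes\ast}\otimes\tilde{\delta}\otimes\cdots\otimes\tilde{\delta}\otimes 1^{\otimes\ast})$ the $n$ copies of $\tilde{\delta}$ are forced to occupy all $n$ slots, leaving no identity factors; hence $\tilde{\phi}\{\tilde{\delta}^{\otimes n}\} = \tilde{\phi}(\tilde{\delta}\otimes\cdots\otimes\tilde{\delta})$. Moreover these arity-$0$ maps are evaluated on the degree-$0$ unit $1\in k$, so the Koszul rule contributes nothing when forming $\tilde{\delta}^{\otimes n}(1)$, giving $\tilde{\phi}\{\tilde{\delta}^{\otimes n}\} = \tilde{\phi}(\sigma\delta, \dots, \sigma\delta)$ on the nose.

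Next I would apply the defining formula $\sigma^{1-n}(\phi)(\sigma f_n,\dots,\sigma f_1) = (-1)^{ni + (n-1)|f_n| + \dots + |f_2|}\sigma\phi(f_n,\dots,f_1)$ with $f_n = \dots = f_1 = \delta$. As $|f_l| = |\delta| = j$ for every $l$, the sign exponent becomes $ni + \sum_{l=2}^{n}(l-1)j = ni + \binom{n}{2}j = n\bigl(i + \tfrac{n-1}{2}j\bigr)$, using $\binom{n}{2} = n(n-1)/2$. Thus $\tilde{\phi}(\sigma\delta,\dots,\sigma\delta) = (-1)^{n(i + ((n-1)/2)j)}\,\sigma\phi(\delta,\dots,\delta)$, and transporting back through the (sign-free) arity-$0$ identification $\psi \leftrightarrow \sigma\psi$ yields the claim.

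The only delicate point is the sign bookkeeping. The potential obstacle would be hidden Koszul or permutation signs inside the brace, but these are eliminated once one notices that every inserted argument has arity $0$, so no identity factors and no nontrivial input degrees ever occur; after that, the entire content is the arithmetic identity $\sum_{l=2}^{n}(l-1) = \binom{n}{2}$ combined with the single shift-sign dictated by \S\ref{parsign}.
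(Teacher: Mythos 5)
The paper states this lemma without proof (it is prefaced only by ``One proves:''), and your direct computation --- observing that inserting $n$ arity-$0$ cochains into an arity-$n$ cochain leaves a single sign-free term in the brace, then unwinding the identification $\sigma^{1-n}(\phi)(\sigma f_n,\dots,\sigma f_1)=(-1)^{ni+(n-1)|f_n|+\dots+|f_2|}\sigma\phi(f_n,\dots,f_1)$ with all $|f_l|=j$ to get the exponent $ni+\binom{n}{2}j=n\bigl(i+\tfrac{n-1}{2}j\bigr)$ --- is exactly the intended argument. Your proof is correct.
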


\subsection{The Hochschild complex of a cdg category}
By the previous section a cdg category is a graded quiver $\AAA$ together with 
\begin{enumerate}
\item 
\emph{compositions}
$\mu_2 = m \in  \prod_{A_0,A_1,A_2} [\AAA(A_1, A_2) \otimes \AAA(A_0, A_1),\AAA(A_0, A_2)]^0$
\item \emph{differentials} $\mu_1 = d \in \prod_{A_0,A_1}[\AAA(A_0, A_1), \AAA(A_0, A_1)]^1$
\item \emph{curvature elements} $\mu_0 = c \in \prod_A \AAA(A,A)^2$
\end{enumerate}
satisfying the identities:
\begin{enumerate}
\item $d(c) = 0$
\item $d^2 = -m(c \otimes 1 - 1 \otimes c)$
\item $dm = m(d \otimes 1 + 1 \otimes d)$
\item $m(m \otimes 1) = m(1 \otimes m)$
\end{enumerate}

\begin{remark}
Note that ii) differs from the conventional $d^2 = m(c \otimes 1 - 1 \otimes c)$ (\cite{getzlerjones2}, \cite{nicolas}). However, it suffices to change $c$ into $-c$ to recover the other definition.
\end{remark}

\begin{example}\label{pcomexample}
Let $\AAA$ be a linear category. An example of a cdg category is the category $\PCom(\AAA)$ of precomplexes of $\AAA$-objects. A \emph{precomplex of $\AAA$-objects} is a $\Z$-graded $\AAA$-object $C$ (with $C^i \in \AAA$) together with a $\Z$-graded $\AAA$-morphism $\delta_C: C \lra C$ of degree 1. We have $\PCom(\AAA)(C,D)^n = \prod_i\AAA(C^i, D^{i+n})$ and, for $f: D \lra E$ and $g: C \lra D$:
\begin{enumerate}
\item $m(f,g)_i = (fg)_i =  f_{i + |g|}g_i$
\item $d(f) = \delta_E f - (-1)^{|f|} f \delta_D$
\item $c_C = - \delta_C^2$
\end{enumerate}
Inside $\PCom(\AAA)$, we have the usual dg category $\Com(\AAA)$ of complexes $C$ of $\AAA$-objects, for which $c_C = \delta_C^2 = 0$. We will use the notations $\Com^+(\AAA)$ and $\Com^-(\AAA)$ for the respective categories of bounded below and bounded above complexes. 
\end{example}

As an example of the passage from $\Sigma\AAA$ to $\AAA$, let us use (\ref{eqnexpr}) to compute the Hochschild differential on $\CC(\AAA)$ for a cdg category $\AAA$. The differential on $\CC_{br}(\AAA)$ is given by
$$\langle \sigma c + d + \sigma^{-1} m, -\rangle$$
Consider $\phi \in \CC^{i,n}(\AAA)$. By definition 
$$\langle \sigma c + d + \sigma^{-1} m , \sigma^{1-n} \phi \rangle =
\dott(\sigma c + d + \sigma^{-1} m, \sigma^{1-n}\phi) - (-1)^{1-n +i}\dott( \sigma^{1-n}\phi, \sigma c + d + \sigma^{-1} m)$$
The corresponding three terms in terms of $\CC(\AAA)$ are:
\begin{enumerate}
\item $[c, \phi] = c \bullet \phi - (-1)^{\deg(\phi) + 1} \phi \bullet c$
which equals
$$\sum_{k=0}^{n-1}(-1)^{k+1}\phi(1^{\otimes{n-k-1}} \otimes c \otimes 1^{\otimes k})$$
\item $[d, \phi] = d \bullet \phi - (-1)^{\deg(\phi) + 1} \phi \bullet d$
which equals $$(-1)^{\arit{\phi}+1}(d\phi - (-1)^{|\phi|} \sum_{k=0}^{n-1}\phi(1^{\otimes n-k-1} \otimes d \otimes 1^{\otimes k}))$$
\item $[m, \phi] = m \bullet \phi - (-1)^{\deg(\phi) +1}\phi \bullet m$
which equals
$$m(\phi \otimes 1) + \sum_{k=0}^{n-1}(-1)^{k+1}\phi(1^{\otimes n-k-1} \otimes m \otimes 1^{\otimes k}) + (-1)^{n+1}m(1 \otimes \phi)$$
\end{enumerate}
If we look at the bigraded object $\CC^{i,n}$ with $i$ being the ``vertical'' grading and $n$ being the ``horizontal'' grading, then $d_h = [m,-]$ defines a \emph{horizontal contribution} whereas $d_v = [d,-]$ defines a \emph{vertical contribution} to the Hochschild differential $d$. Clearly, up to a factor $(-1)^{n+1}$, the horizontal contribution generalizes the classical Hochschild differential for an associative algebra. If we look at the ``$n$-th column'' graded object 
$$\CC^{\ast, n} =  \prod_{A_0, \dots A_n} [\AAA(A_{n-1},A_n) \otimes \dots \otimes \AAA(A_0,A_1), \AAA(A_0,A_n)]$$
then the vertical contribution on $\CC^{\ast, n}$ is $(-1)^{n+1}$ times the canonical map induced from $d$.
Compared to the dg case, we have a new \emph{curved contribution} $d_c = [c,-]$ which goes ``two steps up and one step back''. The curved contribution is zero on the zero part $\CC^{\ast,0}$. In the Hochschild complex of an arbitrary $A_{[0,\infty[}$-category, there are additional contributions going ``$n$ steps down and $n + 1$ steps ahead'' for $n \geq 1$.

\section{A $B_{\infty}$-section to twisted objects}\label{parsection}

Let $\AAA$ be a quiver. As explained in \S\ref{limfunct}, an inclusion $\AAA \subset \AAA'$ of $\AAA$ as a subquiver of some $\AAA'$ induces a morphism of brace algebras $\pi: \CC(\AAA') \lra \CC(\AAA)$. This section is devoted to the construction of certain quivers $\AAA' = \Tw(\AAA)$ of ``twisted objects over $\AAA$'' for which $\pi$ has a certain brace algebra section $\embr_{\delta}$. The morphism $\embr_{\delta}$ will be used in \S \ref{partrans} to transport $A_{[0,\infty[}$-structures from $\AAA$ to $\Tw(\AAA)$. Quivers of twisted complexes encompass the classical twisted complexes over a dg category (\cite{bondalkapranov}, \cite{drinfeld}, \cite{keller}), but also the ``infinite'' quivers of semifree dg modules (\cite{drinfeld}) as well as quivers of (pre)complexes over a linear category. The morphism $\embr_{\delta}$ is such that in those examples, it induces the correct $A_{[0,\infty[}$-structures on these quivers, thus defining a $B_{\infty}$-section of $\pi$. It will be used in \S \ref{parchar} to define the \emph{characteristic dg morphism} of a linear category $\AAA$, which allows us to prove Theorem \ref{maintheorem} and hence Theorem \ref{mainintro}. This chapter is related to ideas in \cite{fukayaooo}, \cite{fukaya}, \cite{lefevre}.

\subsection{Some quivers over $\AAA$}
Let $\AAA$ be a quiver. In this section we define the quiver $\Twfree(\AAA)$ of formal coproducts of shifts of $\AAA$-objects twisted by a morphism of degree $1$. First we define the quiver $\Free(\AAA)$. An object of $\Free(\AAA)$ is a formal expression $M = \oplus_{i \in I}\Sigma^{m_i}A_i$ with $I$ an arbitrary index set,  $A_i \in \AAA$ and $m_i \in \Z$. For another $N =\oplus_{j \in J}\Sigma^{n_i}B_i \in \Free(\AAA)$, the graded object $\Free(\AAA)(M,N)$ is by definition
$$\Free(\AAA)(M,N) = \prod_i \oplus_{j}\Sigma^{n_j - m_i}\AAA(A_i,B_j)$$
An element $f \in \Free(\AAA)(M,N)$ can be represented by a matrix $f = (f_{ji})$, where $f_{ji}$ represents the element $\sigma^{n_j - m_i}f_{ji}$.  

\begin{definition}
For $M, N$ as above, consider a morphism $f \in \Free(\AAA)(M,N)$. For a subset $S \subset I$, let $\Phi_f(S) \subset J$ be defined by $$\Phi_f(S) = \{ j \in J \, | \, \exists\, i \in S \, \, f_{ji} \neq 0\}$$ 
We say that $f \in \Free(M,M)$ is \emph{intrinsically locally nilpotent (iln)} if for every $i \in I$ there exists $n \in \N$ with  $\Phi_f^n(\{i\}) = \varnothing$. 
\end{definition}

\begin{proposition}\label{bracemap}
The canonical isomorphisms 
\begin{equation}\label{equcaniso}
\xymatrix{{[\AAA(A_{n-1},A_n) \otimes \dots \otimes \AAA(A_0,A_1), \AAA(A_0,A_n)]} \ar[d] \\
{[\Sigma^{i_n-i_{n-1}}\AAA(A_{n-1},A_n) \otimes \dots \otimes \Sigma^{i_1-i_0}\AAA(A_0,A_1), \Sigma^{i_n-i_0}\AAA(A_0,A_n)]}}
\end{equation}
for $A_k \in \AAA$, $n_k \in \Z$
define a morphism of brace algebras
\begin{equation}\label{equ1}
\CC(\AAA) \lra \CC(\Free(\AAA)): \phi \longmapsto \phi
\end{equation}
with
\begin{equation}\label{equexpr}
\phi(f_n, \dots, f_1)_{ji} = \sum_{k_{n-1}, \dots, k_1}(-1)^{\epsilon}\phi((f_n)_{jk_{n-1}}, (f_{n-1})_{k_{n-1}k_{n-2}}, \dots, (f_2)_{k_2k_1}, (f_1)_{k_1i})
\end{equation}
\end{proposition}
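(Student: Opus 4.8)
The plan is to regard the map \eqref{equ1} as a purely matrix-coefficient construction and to reduce the brace-algebra property to the associativity of composing internal object-indices, the Koszul signs being controlled throughout by the coherence of the canonical isomorphisms of \S\ref{parsign}.

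First I would check that \eqref{equexpr} is well defined and lands in $\CC(\Free(\AAA))$. For a fixed source index $i$ and target index $j$, the sum over the internal indices $k_{n-1}, \dots, k_1$ is effectively finite: since $\Free(\AAA)(M,N) = \prod_i \oplus_j \Sigma^{n_j - m_i}\AAA(A_i, B_j)$, the entry $(f_1)_{k_1 i}$ is nonzero for only finitely many $k_1$, and inductively $(f_\ell)_{k_\ell k_{\ell - 1}}$ is nonzero for only finitely many $k_\ell$ once $k_{\ell-1}$ is fixed; hence only finitely many summands contribute. Because \eqref{equcaniso} is a special case of the degree-zero isomorphism \eqref{eqnmix}, the image $\widetilde{\phi}$ of $\phi \in \CC^{i,n}(\AAA)$ has the same degree $i$ and the same arity $n$, so the assignment respects the bigrading; the sign $(-1)^{\epsilon}$ in \eqref{equexpr} is by definition the Koszul sign prescribed by \eqref{eqnmix}.

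The heart of the matter is the brace-algebra property: for $\phi, \psi_1, \dots, \psi_m \in \CC(\AAA)$ one must show $\widetilde{\phi}\{\widetilde{\psi_1}, \dots, \widetilde{\psi_m}\} = \widetilde{\phi\{\psi_1, \dots, \psi_m\}}$. I would verify this by evaluating both sides on a tuple of composable $\Free(\AAA)$-morphisms and comparing their $(j,i)$-entries. On the left, the brace on $\CC(\Free(\AAA))$ inserts the matrices $\widetilde{\psi_\ell}(\cdots)$ into the tensor slots of $\widetilde{\phi}$, and expanding each factor by \eqref{equexpr} produces nested internal-index sums. On the right, the brace $\phi\{\psi_1, \dots, \psi_m\}$ is first formed on $\AAA$, summing over the ordered insertion positions, and only then turned into a matrix by \eqref{equexpr}, producing a single global internal-index sum. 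Since matrix composition is associative, these two families of internal-index sums coincide term by term after re-indexing, and the ranges of insertion positions agree because both braces range over the same ordered insertions into the arguments of $\phi$ (which has the same arity as $\widetilde{\phi}$); thus, up to sign, the two sides are the same combinatorial sum.

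The hard part is the sign bookkeeping, which is the only genuinely nontrivial point. Every sign is generated by the isomorphisms \eqref{eqnmix}, which are the structure isomorphisms of the Koszul closed symmetric monoidal structure on $G(k)$ and are in particular natural with respect to tensoring and composing homogeneous maps. I would therefore show that inserting $\widetilde{\psi_\ell}$ into a given slot of $\widetilde{\phi}$ moves exactly the same shift operators $\sigma^{\bullet}$ past exactly the same homogeneous degrees as inserting $\psi_\ell$ into the corresponding slot of $\phi$ and then applying \eqref{equexpr}; hence the two exponents agree modulo $2$. Once this Koszul computation is carried out, the case $m = 1$ already yields $\widetilde{\phi}\{\widetilde{\psi}\} = \widetilde{\phi\{\psi\}}$, so the associated Lie bracket is preserved and, via Proposition \ref{transfer}, the construction will transport $A_{[0,\infty[}$-structures as intended.
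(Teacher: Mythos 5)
Your proposal is correct, and it follows the route the paper itself implicitly takes: Proposition \ref{bracemap} is stated without proof, being regarded as a formal consequence of the canonical isomorphisms of \S\ref{parsign}. Your outline supplies exactly the routine verification that this relies on --- finiteness of the internal-index sums coming from the $\prod_i \oplus_j$ shape of $\Free(\AAA)(M,N)$, compatibility of brace insertion with the entrywise matrix expansion \eqref{equexpr}, and the fact that all signs are forced by the coherence and naturality of the Koszul isomorphisms \eqref{eqnmix} --- so there is nothing to compare against and no gap to report.
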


\begin{lemma}\label{univ}
Consider $\phi \in \CC(\AAA)$ and $(f_n, \dots , f_1) \in \Free(\AAA)(M_{n-1},M_n) \otimes \dots \otimes \Free(\AAA)(M_0,M_1)$. Write $M_0 = \oplus_{i \in I}\Sigma^{\alpha_i}A_i$ and consider $S \subset I$. There is an inclusion $$\Phi_{\phi(f_n, \dots, f_1)}(S) \subset \Phi_{f_n}(\Phi_{f_{n-1}}(\dots \Phi_{f_1}(S)))$$
\end{lemma}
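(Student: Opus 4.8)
The plan is to unwind the definition of the map $\Phi$ together with the explicit formula (\ref{equexpr}) for the brace operation, thereby reducing the claimed inclusion to a purely combinatorial statement about nonvanishing matrix entries; the one genuinely used property of $\phi$ is its multilinearity. Throughout I assume $\phi$ has arity $n$, so that (\ref{equexpr}) applies verbatim.

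First I would fix indexing conventions, writing each $M_l = \oplus_{k}\Sigma^{\beta^l_k}A^l_k$ for $0 \le l \le n$, with $M_0$ carrying the index set $I$ of the statement. Then the entry $\phi(f_n, \dots, f_1)_{ji}$ has source index $i \in I$ and target index $j$ in the index set of $M_n$, and (\ref{equexpr}) expresses it as a sum over intermediate indices $k_{n-1}, \dots, k_1$, where each $k_l$ ranges over the index set of $M_l$.

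Next, I would take $j \in \Phi_{\phi(f_n, \dots, f_1)}(S)$, so that by definition there is some $i \in S$ with $\phi(f_n, \dots, f_1)_{ji} \neq 0$. Hence at least one summand of (\ref{equexpr}) is nonzero, giving intermediate indices $k_1, \dots, k_{n-1}$ with $\phi((f_n)_{jk_{n-1}}, \dots, (f_2)_{k_2 k_1}, (f_1)_{k_1 i}) \neq 0$. Since $\phi$ is multilinear, a zero in any slot would force the whole value to vanish; therefore each factor is nonzero, i.e. $(f_1)_{k_1 i} \neq 0$, $(f_2)_{k_2 k_1} \neq 0, \dots, (f_n)_{j k_{n-1}} \neq 0$.

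Finally I would chain these through the definition of $\Phi$. From $i \in S$ and $(f_1)_{k_1 i} \neq 0$ one gets $k_1 \in \Phi_{f_1}(S)$; then $(f_2)_{k_2 k_1} \neq 0$ yields $k_2 \in \Phi_{f_2}(\Phi_{f_1}(S))$, and so on up the chain, until $(f_n)_{j k_{n-1}} \neq 0$ together with $k_{n-1} \in \Phi_{f_{n-1}}(\dots \Phi_{f_1}(S))$ delivers $j \in \Phi_{f_n}(\Phi_{f_{n-1}}(\dots \Phi_{f_1}(S)))$, as required. No real obstacle arises beyond the bookkeeping; the only points demanding care are the appeal to multilinearity, which converts nonvanishing of a single value of $\phi$ into nonvanishing of each separate matrix entry, and keeping straight which index set each intermediate index $k_l$ inhabits.
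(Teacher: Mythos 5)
Your proof is correct and is essentially the paper's argument run in the forward direction: the paper proves the contrapositive (if $j$ is outside the right-hand side, every chain of intermediate indices contains a zero entry, so every summand of (\ref{equexpr}) vanishes by multilinearity), while you go directly from a nonzero summand to nonzero entries and chain through the $\Phi$'s. The key ingredient --- multilinearity of $\phi$ converting vanishing of one slot into vanishing of the value --- is the same in both.
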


\begin{proof}
Suppose $j$ is not contained in the right hand side. Then for every sequence $j = k_n, \dots, k_1, k_0 = i$ with $i \in S$ one of the entries $(f_p)_{k_p k_{p-1}}$ is zero. But then, looking at the expression (\ref{equexpr}), clearly $\phi(f_n, \dots, f_1)_{ji} = 0$ whence $j$ is not contained in the left hand side.
\end{proof}

Next we define the quiver $\Twfree(\AAA)$. An object of $\Twfree(\AAA)$ is a couple $(M,\delta_M)$ with $M \in \Free(\AAA)$ and $$\delta_M \in \Free(\AAA)(M,M)^1$$ 
For $(M, \delta_M), (N, \delta_N) \in \Twfree(\AAA)$, $\Twfree(\AAA)((M,\delta_M), (N, \delta_N)) = \Free(\AAA)(M, N)$. Consequently, the $\delta_M$ determine an element
$$\delta \in \CC^1(\Twfree(\AAA))$$
The isomorphisms (\ref{equcaniso}) also define a morphism of brace algebras
\begin{equation}
\CC(\AAA) \lra \CC(\Twfree(\AAA)): \phi \longmapsto \phi
\end{equation}
which is a section of the canonical projection morphism $\CC(\Twfree(\AAA)) \lra \CC(\AAA)$. In the next section we show that for certain $\AAA \subset \Tw \subset \Twfree$, 
\begin{equation}
\pi: \CC(\Tw(\AAA)) \lra \CC(\AAA) 
\end{equation} has another section depending on $\delta$, which can be used to transport $A_{[0,\infty[}$-structures.

\begin{definition}\label{deflocnilptw}
\emph{A quiver of locally nilpotent twisted objects over $\AAA$} is by definition a quiver $\Tw(\AAA)$ with $\AAA \subset \Tw(\AAA) \subset \Twfree(\AAA)$ such that for every $\phi \in \CC(\AAA)$, for every $$(f_n, \dots f_1) \in {\Tw(\AAA)(M_{n-1},M_n) \otimes \dots \otimes \Tw(\AAA)(M_0,M_1)}$$ with $M_0 = \oplus_{i \in I}\Sigma^{\alpha_i}A_i$, and for every $i \in I$ there exists $m_0 \in \N$ such that for all $m \geq m_0$, $\Phi_g(\{i\}) = \varnothing$ for $$g = \phi_{m+n}\{\delta^{\otimes m}\}(f_n, \dots f_1)$$
\end{definition}

\begin{example}\label{exdegzero}
If $\AAA$ is concentrated in degree zero, then $\Twfree(\AAA)$ is a quiver of locally nilpotent twisted objects over $\AAA$. Indeed, for $\phi \in \CC(\AAA)$, there is only a single $m$ for which the component $\phi_m$ is different from zero.
\end{example}

\begin{proposition}
Let $\Twinil(\AAA) \subset \Twfree(\AAA)$ be the quiver with as objects the $(M, \delta_M)$ for which $\delta_M \in \Free(\AAA)(M,M)$ is intrinsically locally nilpotent. Then $\Twinil(\AAA)$ is a quiver of locally nilpotent twisted objects over $\AAA$.
\end{proposition}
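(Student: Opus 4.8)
\emph{Proof proposal.} The plan is to verify the condition of Definition \ref{deflocnilptw} for $\Twinil(\AAA)$ by reducing it, via Lemma \ref{univ}, to a purely combinatorial nilpotency statement about the set-maps $\Phi_{\delta_M}$ and $\Phi_{f_p}$. Fix $\phi \in \CC(\AAA)$, a composable string $(f_n, \dots, f_1)$ with $f_p \in \Twinil(\AAA)(M_{p-1},M_p)$, write $M_0 = \oplus_{i \in I}\Sigma^{\alpha_i}A_i$, and fix $i \in I$. Since $\delta$, being of arity $0$, occupies single input slots, expanding the brace operation shows that $g = \phi_{m+n}\{\delta^{\otimes m}\}(f_n, \dots, f_1)$ is a signed sum, over all ways of interleaving the $m$ copies of $\delta$ with the ordered arguments $f_n, \dots, f_1$, of terms $\phi_{m+n}(g_{m+n}, \dots, g_1)$. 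Composability forces the $\delta$ placed between $f_p$ and $f_{p+1}$ (resp.\ to the right of $f_1$, resp.\ to the left of $f_n$) to be the endomorphism $\delta_{M_p}$; thus an interleaving is recorded by integers $a_0, \dots, a_n \geq 0$ with $\sum_p a_p = m$, where $a_p$ counts the copies of $\delta_{M_p}$. Because $\Phi_{h_1+h_2}(S) \subset \Phi_{h_1}(S) \cup \Phi_{h_2}(S)$ and each $\phi_{m+n}$ still lies in $\CC(\AAA)$, applying Lemma \ref{univ} to each term gives
$$\Phi_g(\{i\}) \;\subset\; \bigcup_{a_0 + \dots + a_n = m} \Phi_{\delta_{M_n}}^{a_n}\Phi_{f_n}\Phi_{\delta_{M_{n-1}}}^{a_{n-1}} \cdots \Phi_{f_1}\Phi_{\delta_{M_0}}^{a_0}(\{i\}),$$
so it suffices to show that for $m$ large every summand on the right is empty.

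The next step is a finiteness observation. For any $h \in \Free(\AAA)(M,N)$ and any single index $j$, the set $\{j' : h_{j'j} \neq 0\}$ is finite, since $\Free(\AAA)(M,N)$ involves a coproduct over the target index; hence $\Phi_h$ preserves finiteness of sets. Moreover, intrinsic local nilpotency of $\delta_M$ says precisely that for each index $j$ there is $s$ with $\Phi_{\delta_M}^s(\{j\}) = \varnothing$, so on any \emph{finite} set $F$ one has $\Phi_{\delta_M}^N(F) = \varnothing$ for $N$ large, and consequently the orbit $\bigcup_{s \geq 0}\Phi_{\delta_M}^s(F)$ is finite. Using this I define finite subsets $W_p$ of the index set of $M_p$ recursively by $W_0 = \bigcup_{s \geq 0}\Phi_{\delta_{M_0}}^s(\{i\})$ and $W_p = \bigcup_{s \geq 0}\Phi_{\delta_{M_p}}^s(\Phi_{f_p}(W_{p-1}))$; each $W_p$ is finite by induction. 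By construction $W_p$ contains all $\delta_{M_p}$-iterates of $\Phi_{f_p}(W_{p-1})$, and a straightforward induction on $p$ shows that for \emph{every} interleaving the intermediate set obtained after processing levels $0, \dots, p$ is contained in $W_p$.

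Since each $W_p$ is finite and $\delta_{M_p}$ is intrinsically locally nilpotent, I may choose $N_p \in \N$ with $\Phi_{\delta_{M_p}}^{N_p}(W_p) = \varnothing$, whence $\Phi_{\delta_{M_p}}^{a}(W_p) = \varnothing$ for all $a \geq N_p$. Now set $m_0 = \sum_{p=0}^{n} N_p - n$. If $m \geq m_0$, then for every interleaving $(a_0, \dots, a_n)$ with $\sum_p a_p = m$ the pigeonhole principle produces a level $p$ with $a_p \geq N_p$; as the set fed into $\Phi_{\delta_{M_p}}^{a_p}$ at that level is contained in $W_p$, that factor already returns $\varnothing$ and the whole composite is empty. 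Thus $\Phi_g(\{i\}) = \varnothing$ for all $m \geq m_0$, which is exactly the condition of Definition \ref{deflocnilptw}. The inclusions $\AAA \subset \Twinil(\AAA) \subset \Twfree(\AAA)$ are clear, the zero twist attached to an object of $\AAA$ being trivially intrinsically locally nilpotent.

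The step I expect to be the main obstacle is securing a \emph{single} threshold $m_0$ that works simultaneously for all of the combinatorially many interleavings: pointwise nilpotency of each $\delta_{M_p}$ is not by itself uniform, and the set on which a high power $\Phi_{\delta_{M_p}}^{a_p}$ acts depends on the earlier exponents $a_0, \dots, a_{p-1}$. The device that resolves this is the interleaving-independent finite bound $W_p$ at each level; its finiteness crucially combines the coproduct-in-the-target finiteness of $\Free(\AAA)$-morphisms with the orbit-finiteness supplied by intrinsic local nilpotency.
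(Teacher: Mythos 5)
Your proof is correct and follows essentially the same route as the paper's: expand the brace into interleavings $(a_0,\dots,a_n)$, control each composite $\Phi_{\delta_{M_n}}^{a_n}\Phi_{f_n}\cdots\Phi_{f_1}\Phi_{\delta_{M_0}}^{a_0}(\{i\})$ via Lemma \ref{univ}, build the recursive finite sets (the paper's $S_l$ are your $W_p$ up to where the orbit closure is taken), and conclude by pigeonhole. The only difference is that you explicitly justify the finiteness of these sets from the coproduct over target indices in $\Free(\AAA)(M,N)$ --- a point the paper's proof uses but does not spell out.
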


\begin{proof}
Consider $\phi$, $(f_n, \dots f_1)$ and $i$ as in Definition \ref{deflocnilptw} and put $\delta_i = \delta_{M_i}$. For $m \in \N$, consider $g_m = \phi_{m+n}\{\delta^{\otimes m}\}(f_n, \dots, f_1)$. This $g_m$ is a sum of expressions 
$$g_{m_n, \dots, m_0} = \phi_{m+n}(\delta_n^{\otimes m_n}, f_n, \delta_{n-1}^{\otimes m_{n-1}}, \dots, \delta_1^{\otimes m_1}, f_1, \delta_0^{\otimes m_0})$$
with $m_n + \dots + m_0 = m$.
For $\Phi_{g_{m_n, \dots, m_0}}(\{i\})$ to be empty, it suffices by Lemma \ref{univ} that
\begin{equation}\label{necsuf}
\Phi^{m_n}_{\delta_n}(\Phi_{f_n}(\dots(\Phi_{f_1}(\Phi^{m_0}_{\delta_0}(\{i\}))))) = \varnothing
\end{equation}
We recursively define numbers $p_l$ and finite sets $S_l$ for $l = 0, \dots, n$ in the following manner. Put $S_0 = \{i\}$. Once $S_{l}$ is defined, $p_{l}$ is such that $\Phi^{p_{l}}_{\delta_{l}}(S_{l}) = \varnothing$ (such a $p_{l}$ exists since $\delta_{l}$ is iln) and $S_{l+1} = \cup_{p \in \N}\Phi_{f_{l+1}}\Phi^{p}_{\delta_{l}}(S_{l})$. By the pigeonhole principle, if $m \geq p_n + \dots + p_0$, every $g_{m_n, \dots, m_0}$ with $m_n + \dots + m_0 = m$ has at least one $m_l \geq p_l$, and consequently (\ref{necsuf}) holds true. Hence in Definition \ref{deflocnilptw}, it suffices to take $m_0 = p_n + \dots + p_0$.
\end{proof}

\subsection{A word on topology}\label{partopo} Although not strictly necessary, it will be convenient to use a bit of topology to understand and reformulate definition \ref{deflocnilptw}. The language of this section will be used in the proof of Proposition \ref{bracesect}. All the topologies we consider will turn the underlying $k$-modules into topological $k$-modules, so in particular we can speak about completions.
Put $\CC = \CC_{br}(\Tw(\AAA))$ for some arbitrary full subcategory $\Tw(\AAA) \subset \Twfree(\AAA)$. To manipulate certain elements of $B_{\prod}\CC = \prod_{n \geq 0}(\Sigma\CC)^{\otimes n}$ that are not in $B\CC$, it will be convenient to consider a certain completion $\hat{B}\CC$ of $B\CC$. As a first step we endow $B\CC$ with a complete Hausdorff ``pointwise'' topology $\ttt_0$. To do so we suppose that $\AAA$ is naturally a complete Hausdorff topological $k$-quiver, i.e. the $\AAA(A,A')$ are complete Hausdorff topological $k$-modules (if there is no natural topology, the $\AAA(A,A')$ are endowed with the discrete topology). 

Now consider the algebra multiplication
$$M: B\CC \otimes B\CC \lra B\CC$$ defined by the brace operations. We suppose that $M$ preserves Cauchy nets with respect to $\ttt_0$. For every $\phi \in \CC_{br}(\AAA) \subset B\CC$ we consider the map
$$M_{\phi} = M(\phi,-): B\CC \lra (B\CC, \ttt_0)$$
Next we endow $B\CC$ with the ``weak topology'' $\ttt \subset \ttt_0$ which is by definition the initial topology for the collection $(M_{\phi})_{\phi}$, and we let $\hat{B}\CC$ denote the completion of $B\CC$ with respect to $\ttt$. The $M_{\phi}$ have natural continuous extensions
\begin{equation}\label{eqcont1}
\hat{M}_{\phi}: \hat{B}\CC \lra B\CC
\end{equation}

\begin{lemma}
For $\psi \in B\CC$, the map $M^{\psi} = M(-,\psi): B\CC \lra B\CC$ preserves Cauchy nets with respect to $\ttt$. Consequently, there is a natural continuous extension
\begin{equation}\label{eqcont2}
\hat{M}^{\psi}: \hat{B}\CC \lra \hat{B}\CC
\end{equation}
\end{lemma}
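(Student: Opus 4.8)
The plan is to deduce both assertions from the associativity of the multiplication $M$ together with the already assumed fact that $M$ preserves $\ttt_0$-Cauchy nets. First I would record the description of $\ttt$-Cauchy nets coming from the definition of $\ttt$ as the initial (weak) topology for the family $(M_\phi)_{\phi \in \CC_{br}(\AAA)}$: a base of $\ttt$-neighbourhoods of $0$ in $B\CC$ is given by finite intersections of sets $M_\phi^{-1}(U)$ with $U$ a $\ttt_0$-neighbourhood of $0$, so that a net $(x_\lambda)$ in $B\CC$ is $\ttt$-Cauchy if and only if $(M_\phi(x_\lambda))_\lambda$ is $\ttt_0$-Cauchy for every $\phi \in \CC_{br}(\AAA)$.

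The only substantive input is a commutation identity. For $\phi \in \CC_{br}(\AAA)$ and the fixed $\psi \in B\CC$, associativity of $M$ gives, for all $x \in B\CC$,
$$M_\phi(M^\psi(x)) = M(\phi, M(x, \psi)) = M(M(\phi, x), \psi) = M^\psi(M_\phi(x)),$$
so that $M_\phi M^\psi = M^\psi M_\phi$ as maps $B\CC \lra B\CC$.

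Now I would combine the two ingredients. Let $(x_\lambda)$ be a $\ttt$-Cauchy net and fix $\phi$. By the criterion above, $(M_\phi(x_\lambda))_\lambda$ is $\ttt_0$-Cauchy. Since $M$ preserves $\ttt_0$-Cauchy nets, holding its second argument fixed equal to $\psi$ shows $M^\psi$ preserves $\ttt_0$-Cauchy nets; hence $(M^\psi(M_\phi(x_\lambda)))_\lambda$ is $\ttt_0$-Cauchy. By the commutation identity this net equals $(M_\phi(M^\psi(x_\lambda)))_\lambda$, which is therefore $\ttt_0$-Cauchy for every $\phi$; applying the criterion once more, $(M^\psi(x_\lambda))_\lambda$ is $\ttt$-Cauchy, which is the first assertion. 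For the ``consequently'' part, since $M^\psi$ is $k$-linear and $\ttt$-continuous and sends $\ttt$-Cauchy nets to $\ttt$-Cauchy nets, it passes to the completion and defines $\hat{M}^\psi \colon \hat{B}\CC \lra \hat{B}\CC$; note that, in contrast with the extensions $\hat{M}_\phi$ of \eqref{eqcont1}, the target here must be $\hat{B}\CC$, because the image of a $\ttt$-Cauchy net need only converge in the completion.

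I expect the main obstacle to be purely the topological bookkeeping rather than any genuine idea. Concretely, one must (a) justify the neighbourhood-base description of the weak topology $\ttt$ and the resulting Cauchy-net criterion; (b) check that ``$M$ preserves $\ttt_0$-Cauchy nets'' survives holding one argument fixed, which reduces to the fact that $x \longmapsto x \otimes \psi$ preserves Cauchy nets for the topology on $B\CC \otimes B\CC$ underlying that hypothesis; and (c) verify that $M^\psi$ is genuinely $\ttt$-continuous (equivalently, sends $\ttt$-null nets to $\ttt$-null nets), so that the extension is well defined on equivalence classes of Cauchy nets. Continuity in (c) is itself read off from the commutation identity, since $M_\phi M^\psi = M^\psi M_\phi$ exhibits each $M_\phi \circ M^\psi$ as $\ttt_0$-continuous, which by the universal property of the initial topology on the target forces $M^\psi$ to be $\ttt$-continuous. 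All three points are routine facts about topological $k$-modules; the mathematical content is confined to the one-line associativity computation displayed above.
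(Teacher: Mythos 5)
Your proof is correct and follows essentially the same route as the paper: the paper's entire argument is that $M(\phi, M(x_\alpha,\psi)) = M(M(\phi,x_\alpha),\psi)$ is $\ttt_0$-Cauchy because $M$ is associative and preserves Cauchy nets, which is exactly your commutation identity combined with the Cauchy-net criterion for the initial topology $\ttt$. The additional bookkeeping you flag in points (a)--(c) is left implicit in the paper but is the standard content of completing a topological $k$-module.
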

\begin{proof}
Suppose we have a $\ttt$-Cauchy net  $(x_{\alpha})_{\alpha}$ in $B\CC$. We have to show that $M(\phi, M(x_{\alpha}, \psi))$ is $\ttt_0$-Cauchy for every $\phi \in \CC_{br}(\AAA)$. This follows since $M$ is associative and preserves Cauchy nets.
\end{proof}

\begin{definition}\label{twistedquiver}
Let $\AAA$ be a topological $k$-quiver.
A \emph{quiver of twisted objects over $\AAA$} is by definition a quiver $\AAA \subset \Tw(\AAA) \subset \Twfree(\AAA)$ such that for the canonical $\delta \in \CC^1(\Tw(\AAA))$
the sequence $(\sum_{k = 0}^{m}\delta^{\otimes k})_{m\geq 0}$ converges in $\hat{B}\CC$ to a unique element 
$$e^{\delta} = \sum_{k = 0}^{\infty}\delta^{\otimes k}$$
\end{definition}

\begin{remark}
We noticed that the same suggestive exponential notation is used in \cite{fukaya}.
\end{remark}

\begin{proposition}
Let $\AAA$ be a $k$-quiver and consider $\AAA \subset \Tw(\AAA) \subset \Twfree(\AAA)$. The following are equivalent:
\begin{enumerate}
\item $\Tw(\AAA)$ is a quiver of twisted objects over $\AAA$ where $\AAA$ is endowed with the discrete topology.
\item $\Tw(\AAA)$ is a quiver of locally nilpotent twisted objects over $\AAA$.
\end{enumerate}
\end{proposition}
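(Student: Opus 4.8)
The plan is to show that, once both conditions are unwound, they reduce to the \emph{same} combinatorial statement: for every $\phi \in \CC(\AAA)$, every composable tuple $(f_n, \dots, f_1)$ over $\Tw(\AAA)$ with source $M_0 = \oplus_{i \in I}\Sigma^{\alpha_i}A_i$, and every $i \in I$, the ``$i$-th columns'' of the elements $g_m = \phi_{m+n}\{\delta^{\otimes m}\}(f_n, \dots, f_1)$ vanish for all large $m$. For condition (ii) this is literally Definition \ref{deflocnilptw}, since $\Phi_{g_m}(\{i\}) = \varnothing$ says exactly that the $i$-th column of $g_m$ is zero. So essentially all of the work lies in translating the topological condition (i) into this form.

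First I would unwind the convergence in $\hat{B}\CC$. By construction $\ttt$ is the initial topology for the maps $M_\phi = M(\phi,-)$ with $\phi \in \CC_{br}(\AAA)$, so the partial sums $s_m = \sum_{k=0}^m \delta^{\otimes k}$ are $\ttt$-Cauchy (equivalently, converge in the completion) if and only if each image $M_\phi(s_m)$ is $\ttt_0$-Cauchy. Using that the only nonzero components of $M$ are $M_{1,l}(\phi; \delta, \dots, \delta) = \phi\{\delta^{\otimes l}\}$, I would compute
$$M_\phi(s_m) = \sum_{k=0}^m \phi\{\delta^{\otimes k}\},$$
an element of the tensor-degree-one part $\CC_{br}(\Tw(\AAA)) \subset B\CC$. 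Since the continuous extension $\hat{M}_\phi$ of \eqref{eqcont1} lands in the $\ttt_0$-complete space $B\CC$, the limit $e^\delta$ exists, and is unique, precisely when all these series are $\ttt_0$-convergent, the value being $\hat{M}_\phi(e^\delta) = \sum_{k=0}^\infty \phi\{\delta^{\otimes k}\}$. Uniqueness is automatic here: $e^\delta$ is determined by its images under the $M_\phi$ defining $\ttt$, and those lie in the Hausdorff topology $\ttt_0$.

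Next I would make $\ttt_0$-convergence explicit. Evaluating on a fixed arity-$n$ tuple $(f_n, \dots, f_1)$ selects only the arity-$n$ part of $\phi\{\delta^{\otimes k}\}$, which by the count $\arit(\phi_p\{\delta^{\otimes k}\}) = p - k$ (each $\delta$ has arity $0$) comes exactly from $\phi_{n+k}$; hence $\phi\{\delta^{\otimes k}\}(f_n, \dots, f_1) = g_k$ in the notation above. Because $\ttt_0$ is the pointwise topology and each hom-object $\Free(\AAA)(M_0,M_n)$ is a product over the source index $i$ of ``columns'', each of which is a \emph{direct} sum over the target index and hence supported on finitely many of them, $\ttt_0$-convergence of $\sum_k g_k$ amounts to the stabilization of every column, i.e.\ to $g_k$ having zero $i$-th column for all large $k$. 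This is exactly the displayed statement, from which I read off the equivalence with Definition \ref{deflocnilptw}.

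The one genuinely delicate point, and the step I expect to be the main obstacle, is the claim that $\ttt_0$-convergence of a column series forces its individual increments to vanish eventually, rather than merely to converge coordinatewise in the target index. This is where the hypothesis that $\AAA$ carries the \emph{discrete} topology is essential: without it a telescoping series $g_k(\cdot, i) = e_{j_k} - e_{j_{k+1}}$ with the $j_k$ distinct would converge coordinatewise while no single increment vanishes, so (i) would hold but (ii) fail. I would therefore record carefully that discreteness makes each column a discrete module, in which convergence of a series is equivalent to eventual vanishing of its terms; this closes the gap between per-column convergence and the uniform (over the target index) emptiness of $\Phi_{g_m}(\{i\})$ demanded by Definition \ref{deflocnilptw}. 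With this in hand (i) $\Rightarrow$ (ii) follows, and the reverse implication (ii) $\Rightarrow$ (i) is immediate upon reversing the identifications, since eventual vanishing of the increments trivially yields $\ttt_0$-convergence.
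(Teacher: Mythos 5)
Your proof is correct and follows essentially the same route as the paper: unwind convergence in $\hat{B}\CC$ via the initial topology for the maps $M_\phi$, reduce to $\ttt_0$-convergence of the series $\sum_k \phi\{\delta^{\otimes k}\}$ evaluated pointwise on tuples $(f_n,\dots,f_1)$ and source indices $i$, and observe that in the pointwise discrete topology convergence of the series is equivalent to eventual vanishing of its terms, which is Definition \ref{deflocnilptw}. The paper dispatches the last step with a one-line ``clearly equivalent''; your explicit discussion of why discreteness rules out mere coordinatewise stabilization in the target index is a welcome elaboration of the same argument, not a departure from it.
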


\begin{proof}
By definition of the completion, the sequence converges in $\hat{B}\CC$ if and only if for every $\phi \in \CC_{br}(\AAA)$, the sequence $(\sum_{k=0}^m\phi\{\delta^{\otimes k}\})_{m \geq 0})$ converges for the ``pointwise discrete'' topology $\ttt_0$ on $B\CC$. By definition of this topology, this means that for every $(f_n, \dots ,f_1)$ and $i \in I$ as in Definition \ref{deflocnilptw}, there exists an $m_0$ such that the general term $((\sum_{k=0}^m\phi\{\delta^{\otimes k}\}(f_n, \dots ,f_1)(i))$ becomes constant for $m \geq m_0$. This is clearly equivalent to the fact that the expressions $\phi\{\delta^{\otimes k}\}(f_n, \dots ,f_1)(i)$ become zero for $k \geq m_0$.
\end{proof}

\subsection{Transport of $A_{[0,\infty[}$-structures to $\Tw(\AAA)$}\label{partrans}
Let $\AAA$ be a topological quiver and consider the inclusion $\AAA \subset \Tw(\AAA)$ of $\AAA$ into a quiver of twisted objects over $\AAA$ (in particular, $\Tw(\AAA)$ can be a quiver of locally nilpotent twisted objects over an arbitrary quiver $\AAA$). Let $$\delta \in \CC^1(\Tw(\AAA))$$ be the canonical Hochschild cochain of $\Tw(\AAA)$.

\begin{proposition}\label{bracesect}
The canonical projection $\pi: \CC(\Tw(\AAA)) \lra \CC(\AAA)$ has a brace algebra section
\begin{equation}\label{eqbracesect}
\mathrm{embr}_{\delta}: \CC(\AAA) \lra \CC(\Tw(\AAA)): \phi \longmapsto \sum_{m = 0}^{\infty}\phi\{\delta^{\otimes m}\}
\end{equation}
For $M = (M, \delta_M) \in \Tw(\AAA)$ and $\phi \in \CC^p(\AAA)$, the component $\phi_M = (\mathrm{embr}_{\delta}(\phi))_M \in \Tw(\AAA)(M,M)^1$ is given by
\begin{equation}\label{eqalpha}
\phi_M = \sum_{m=0}^{\infty}(-1)^{\alpha}\phi_m(\delta_M^{\otimes m})
\end{equation} 
with $\alpha = m((p-m) + (m-1)/2)$.
\end{proposition}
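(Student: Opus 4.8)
The plan is to realize $\embr_{\delta}$ as a continuous operation governed by the element $e^{\delta}$ of \S\ref{partopo}, to dispatch the section property by a short support argument, and to reduce the brace-compatibility to a term-by-term bijection that hinges on $\delta$ having arity $0$. Throughout I read every $\phi \in \CC(\AAA)$ as an element of $\CC_{br}(\Tw(\AAA))$ via the naive brace algebra section of Proposition \ref{bracemap}. First I would check that the series \eqref{eqbracesect} makes sense: by Definition \ref{deflocnilptw}, for every tuple $(f_n, \dots, f_1)$ of composable morphisms of $\Tw(\AAA)$ and every index $i$, only finitely many $m$ contribute a nonzero term to the matrix coefficient \eqref{equexpr} of $\phi\{\delta^{\otimes m}\}$; equivalently, $e^{\delta} = \sum_{k}\delta^{\otimes k}$ converges in $\hat{B}\CC$ (Definition \ref{twistedquiver}). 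Hence $\embr_{\delta}(\phi) = \hat{M}_{\phi}(e^{\delta})$ is a well-defined cochain by \eqref{eqcont1}, and since each summand $\phi\{\delta^{\otimes m}\}$ is a single cochain the total lies in $\CC_{br}(\Tw(\AAA)) = \CC(\Tw(\AAA))$.

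\emph{Section property.} The projection $\pi$ is a brace algebra morphism (\S\ref{limfunct}) and $\pi(\delta) = 0$, because the objects of $\AAA$ sit inside $\Tw(\AAA)$ as pairs $(A,0)$ of vanishing twist. Thus $\pi(\phi\{\delta^{\otimes m}\}) = \pi(\phi)\{\pi(\delta)^{\otimes m}\} = 0$ for $m \geq 1$, the term $m = 0$ returns $\pi(\phi) = \phi$, and passing $\pi$ through the pointwise-convergent sum gives $\pi \embr_{\delta} = \mathrm{id}$.

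\emph{Brace morphism.} The crux is the identity $\embr_{\delta}(\phi)\{\embr_{\delta}(\psi_1), \dots, \embr_{\delta}(\psi_n)\} = \embr_{\delta}(\phi\{\psi_1, \dots, \psi_n\})$. I would expand both sides by the relation of Definition \ref{bracedef} and match them. The key observation is that $\delta \in \CC^{\ast,0}(\Tw(\AAA))$ has arity $0$, so $\delta\{\cdots\}$ vanishes on any nonempty argument; therefore, when the relation is applied to $\phi\{\delta^{\otimes k}\}\{\psi_1\{\delta^{\otimes k_1}\}, \dots, \psi_n\{\delta^{\otimes k_n}\}\}$, none of the arguments $\psi_j\{\delta^{\otimes k_j}\}$ can be absorbed into a copy of $\delta$, and they may only be placed, in order, as direct arguments of $\phi$ interspersed among the surviving $\delta$'s. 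Summed over $k, k_1, \dots, k_n$, the left-hand side becomes $\sum \pm\, \phi\{\delta^{\otimes c_0}, \psi_1\{\delta^{\otimes k_1}\}, \delta^{\otimes c_1}, \dots, \psi_n\{\delta^{\otimes k_n}\}, \delta^{\otimes c_n}\}$. Expanding $(\phi\{\psi_1, \dots, \psi_n\})\{\delta^{\otimes m}\}$ on the right distributes the $m$ copies of $\delta$ in exactly the same way—into the gaps between the $\psi_j$ and into the individual $\psi_j$—yielding the identical collection of terms, with the two index sets in evident bijection; the continuity of $\hat{M}^{\psi}$ from \eqref{eqcont2} legitimizes the rearrangement of these convergent sums. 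I expect the genuine difficulty to be the Koszul-sign bookkeeping of \S\ref{parsign}: one must verify that the sign produced on the left (from the brace relation applied to $\phi\{\delta^{\otimes k}\}$, together with the signs concealed in each $\embr_{\delta}(\psi_j)$) matches the sign produced on the right (from the single application of the relation to $\phi\{\psi_1, \dots, \psi_n\}$).

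\emph{Component formula.} Finally, \eqref{eqalpha} follows by extracting the arity-$0$ part at a fixed $M = (M, \delta_M)$. Since $\delta$ has arity $0$, the summand $\phi\{\delta^{\otimes m}\}$ contributes to arity $0$ only through the arity-$m$ component $\phi_m \in \CC^{p-m, m}(\AAA)$, and at the object $M$ all inserted copies of $\delta$ are $\delta_M$; so $\phi_M = \sum_m \phi_m\{\delta_M^{\otimes m}\}$. Applying Lemma \ref{lembr} with $i = p - m$ and $j = 1$ rewrites each term as $(-1)^{m((p-m)+(m-1)/2)}\phi_m(\delta_M^{\otimes m})$, which is exactly \eqref{eqalpha}.
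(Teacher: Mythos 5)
Your construction of $\embr_{\delta}$ is the same as the paper's --- the element $e^{\delta} \in \hat{B}\CC$ of Definition \ref{twistedquiver} and the continuous extension $\hat{M}(-,e^{\delta})$ of \S \ref{partopo} --- and your treatment of the component formula via Lemma \ref{lembr} with $i = p-m$, $j=1$ is exactly what the paper intends. Where you genuinely diverge is the verification that $\embr_{\delta}$ is a brace algebra morphism. You propose to expand both sides of $\embr_{\delta}(\phi)\{\embr_{\delta}(\psi_1),\dots,\embr_{\delta}(\psi_n)\} = \embr_{\delta}(\phi\{\psi_1,\dots,\psi_n\})$ via the relation of Definition \ref{bracedef} and exhibit a term-by-term bijection, using that $\delta$ has arity $0$ and hence cannot absorb any $\psi_j$; you correctly flag the Koszul-sign bookkeeping as the remaining (and only real) difficulty, but you do not carry it out. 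The paper avoids this entirely: since the brace relation \emph{is} the associativity of the multiplication $M$ on $B\CC$ (this is the content of the proposition following Definition \ref{bracedef}), it suffices to write
$$\hat{M}(\hat{M}(\phi,\psi),e^{\delta}) = \hat{M}(\phi,\hat{M}(\psi,e^{\delta})) = \hat{M}(\hat{M}(\phi,e^{\delta}),\hat{M}(\psi,e^{\delta}))$$
using associativity, the continuity statements \eqref{eqcont1}--\eqref{eqcont2}, and the identity $\hat{M}(e^{\delta},\psi)=\psi$ (on the $\CC_{br}$-components) --- and this last identity is precisely where the arity-zero property of $\delta$ that you isolate enters, in the form $\delta\{\psi\}=0$. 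So the two arguments have the same combinatorial core, but the paper's packaging through the dg bialgebra $B\CC$ makes your bijection of index sets, and in particular the sign matching you defer, come for free. Your approach buys explicitness (one literally sees where each copy of $\delta$ lands) at the cost of a sign verification that is tedious but would succeed; the paper's buys brevity and sign-safety at the cost of having already set up the topological completion machinery. One small credit to you: you supply the section property ($\pi(\delta)=0$ because $\AAA$-objects carry the zero twist, so only the $m=0$ term survives under $\pi$), which the paper's proof leaves entirely implicit.
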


 \begin{remark}
 If $\mathrm{char}(k) = 0$, the map $\mathrm{embr}_{\delta}$ is the Lie morphism $e^{[-,\delta]}$.
 \end{remark}
 
\begin{proof}
According to Definition \ref{twistedquiver}, we dispose of an element $e^{\delta} = \sum_{k=0}^{\infty}\delta^{\otimes k} \in \hat{B}\CC$.
We define $\mathrm{embr}_{\delta}$ to be the restriction of the morphism
\begin{equation}\label{hatMe} 
\hat{M}(-,e^{\delta}): B\CC(\AAA) \lra B\CC(\Tw(\AAA))
\end{equation}
which exists by \S \ref{partopo}. In particular, the right hand side of (\ref{eqbracesect}) should be read as a pointwise series, i.e. for $(f_n, \dots, f_1) \in \Tw(\AAA)(M_{n-1},M_n) \otimes \dots \otimes \Tw(\AAA)(M_0,M_1)$ where $M_0 = \oplus_{i \in I}\Sigma^{\alpha_i}A_i$ and $M_n = \oplus_{j \in J}\Sigma^{\beta_j}B_j$, we have $$((\sum_{m = 0}^{\infty}\phi\{\delta^{\otimes m}\})(f_n, \dots, f_1))_{ji} = \sum_{m = 0}^{\infty}((\phi\{\delta^{\otimes m}\}(f_n, \dots, f_1))_{ji})$$
and the right hand side converges for the topology of $\AAA$.
Next we verify that $(\ref{hatMe})$ is a morphism of algebras, i.e. preserves the multiplication $M$.
Consider $\phi, \psi \in B\CC_{br}(\AAA)$. We have $$\hat{M}(\hat{M}(\phi, \psi), e^{\delta}) = \hat{M}(\phi, \hat{M}(\psi, e^{\delta})) = \hat{M}(\hat{M}(\phi, e^{\delta}), \hat{M}(\psi, e^{\delta}))$$
where we used associativity of $M$, continuity of (\ref{eqcont1}) and (\ref{eqcont2}) and the fact that $\hat{M}(e^{\delta}, \psi) = \psi$.
Finally, the statement \eqref{eqalpha} follows from Lemma \ref{lembr}.
\end{proof}
 
 Combining Proposition \ref{bracesect} with Proposition \ref{transfer}, we get:
 \begin{proposition}\label{formulae}
 \begin{enumerate}
 \item If $\mu$ is an $A_{[0, \infty[}$-structure on $\AAA$, then $\mathrm{embr}_{\delta}(\mu)$ is an $A_{[0,\infty[}$-structure on $\Tw(\AAA)$ and $$\mathrm{embr}_{\delta}: (\AAA, \mu) \lra (\Tw(\AAA), \mathrm{embr}_{\delta}(\mu))$$ is a $B_{\infty}$-morphism.
 \item If $\mu = c + d + m$ is a cdg structure on $\AAA$, then $$\mathrm{embr}_{\delta}(\mu) = (c + d\{\delta\} + m\{\delta, \delta\}) + (d + m\{\delta\}) + m$$ is a cdg structure on $\Tw(\AAA)$.
 \item If $\mu = d + m$ is a dg structure on $\AAA$ and $\delta \in \CC^1(\Tw(\AAA))$ satisfies $$d\{\delta\} + m\{\delta, \delta\} = 0$$ then $$\mathrm{embr}_{\delta}(\mu) = (d + m\{\delta\}) + m$$ is a dg structure on $\Tw(\AAA)$.
 \end{enumerate}
 \end{proposition}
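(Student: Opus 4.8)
The plan is to deduce (1) immediately from the two cited propositions and to obtain (2) and (3) by expanding the defining series of $\mathrm{embr}_{\delta}$ and bookkeeping arities, noting that for a cdg or dg input the series is in fact finite.

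First I would settle (1). By Proposition \ref{bracesect}, $\mathrm{embr}_{\delta} \colon \CC(\AAA) \to \CC(\Tw(\AAA))$ is a morphism of brace algebras. Hence Proposition \ref{transfer}, applied to $\Psi = \mathrm{embr}_{\delta}$ and to the $A_{[0,\infty[}$-structure $\mu$, gives at once that $\mathrm{embr}_{\delta}(\mu)$ is an $A_{[0,\infty[}$-structure on $\Tw(\AAA)$ and that $\mathrm{embr}_{\delta}$ is a $B_{\infty}$-morphism. Nothing further is needed for (1).

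For (2) I would expand $\mathrm{embr}_{\delta}(\mu) = \sum_{r \geq 0} \mu\{\delta^{\otimes r}\}$ using the arity conventions of \S\ref{parquiver}. The canonical cochain $\delta$ lies in $\CC^{1,0}(\Tw(\AAA))$, hence has arity $0$; and the brace formula $\phi\{\psi_1, \dots, \psi_r\} = \sum \phi(1^{\otimes \cdots} \otimes \psi_1 \otimes \cdots \otimes \psi_r \otimes 1^{\otimes \cdots})$ inserts the $\psi_i$ into the argument slots of $\phi$, so $\phi\{\delta^{\otimes r}\}$ vanishes once $r > \arit(\phi)$ and otherwise has arity $\arit(\phi) - r$. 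Writing $\mu = c + d + m$ with $c, d, m$ of arities $0, 1, 2$, the series therefore truncates at $r = 2$ (so convergence is not an issue here), and the surviving summands are $c, d, m, d\{\delta\}, m\{\delta\}, m\{\delta,\delta\}$. Collecting these by output arity gives
\[
\mathrm{embr}_{\delta}(\mu) = \bigl(c + d\{\delta\} + m\{\delta,\delta\}\bigr) + \bigl(d + m\{\delta\}\bigr) + m,
\]
with the three grouped terms lying in arities $0, 1, 2$ respectively; this is the formula of (2).

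Finally I would read off (2) and (3) from this expansion together with (1). By (1) the cochain $\mathrm{embr}_{\delta}(\mu)$ is an $A_{[0,\infty[}$-structure, and by the computation it has no components of arity $\geq 3$, so it is by definition a cdg structure, proving (2). For (3) the input is dg, so $c = 0$, and the hypothesis $d\{\delta\} + m\{\delta,\delta\} = 0$ annihilates exactly the arity-$0$ (curvature) part above; what remains is $(d + m\{\delta\}) + m$, which has neither an arity-$0$ nor an arity-$\geq 3$ component and is therefore a dg structure. The only point demanding attention is the arity accounting that forces the truncation at $r = 2$; everything else is a direct rewriting, and in particular the signs have already been absorbed into the induced brace operations on $\CC(\AAA)$ (Lemma \ref{lembr}), so the answer comes out in the unsigned form stated.
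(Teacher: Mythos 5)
Your proposal is correct and follows the paper's own route: the paper derives this proposition precisely by combining Proposition \ref{bracesect} (brace algebra section) with Proposition \ref{transfer} (transport of $A_{[0,\infty[}$-structures along brace morphisms), which is your part (1), and leaves the arity bookkeeping for (2) and (3) implicit. Your explicit truncation of the series at $r=2$ and the grouping by output arity is exactly the intended (and correct) computation.
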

From now on, quivers of twisted objects over an $A_{[0,\infty[}$-category $(\AAA,\mu)$ will always be endowed with the $A_{[0,\infty[}$-structure $\mathrm{embr}_{\delta}(\mu)$.

 \begin{remark}
 A similar kind of ``transport'' is used in \cite[\S 6]{lefevre} in order to construct $A_{\infty}$-functor categories.
 \end{remark}

\subsection{Classical twisted complexes}\label{parclass}

We will now discuss how some classical categories of twisted complexes fit in the framework of the previous sections.

 \begin{definition}\label{inftypart}
 Let $\AAA = (\AAA, \mu)$ be an $A_{[0, \infty[}$-category. The \emph{$\infty$-part of $\AAA$} is the full subcategory $\AAA_{\infty} \subset \AAA$ with as objects the $A \in \AAA$ for which $\mu_A \in \AAA(A,A)^2$ is zero.
 \end{definition}

\begin{example}
Let $\AAA$ be an $A_{\infty}$-category and let $\twilnil(\AAA) \subset \Twilnil(\AAA)$ be the quiver with as objects the $(M, \delta_M)$ where $M = \oplus_{i=0}^k\Sigma^{m_i} A_i$ is ``finite''. 
\begin{enumerate}
\item If $\AAA$ is a dg category , then the dg category $\twilnil(\AAA)_{\infty}$ is equivalent to the classical dg category of twisted complexes over $\AAA$ (\cite{bondalkapranov}, \cite{drinfeld}, \cite{keller}). Indeed, the $\infty$-part of $\twilnil(\AAA)$ is its restriction to the objects $(M, \delta_M)$ with $$d\{\delta_M\} + m\{\delta_M, \delta_M\} = 0$$
More generally, $\twilnil(\AAA)_{\infty}$ is equivalent to the $A_{\infty}$-category $\tw(\AAA)$ of twisted objects over $\AAA$ (\cite{lefevre}, and \cite{fukaya} for the algebra case).
\item The dg category $\Twilnil(\AAA)_{\infty}$ is equivalent to the classical dg category of semifree complexes over $\AAA$ (\cite{drinfeld}) which is a dg-model for $D(\AAA)$, i.e. there is an equivalence of triangulated categories $H^0(\Twilnil(\AAA)_{\infty}) \cong D(\AAA)$.
\end{enumerate}
\end{example}

\begin{remark}
We conjecture that for an $A_{\infty}$-category $\AAA$, the $A_{\infty}$-category $\Twilnil(\AAA)_{\infty}$ is an $A_{\infty}$-model for the derived category of $\AAA$, i.e. there is an equivalence of triangulated categories $H^0(\Twilnil(\AAA)_{\infty}) \cong D_{\infty}(\AAA)$, where for definitions of the right hand side, we refer the reader to \cite{lefevre}. The finite version of this result has been obtained in \cite[\S 7.4]{lefevre}.
\end{remark}

\begin{remark}
For an $A_{[0,\infty[}$-category $\AAA$, we have $\twilnil(\AAA)_{\infty} = \twilnil(\AAA_{\infty})_{\infty}$ and similarly for $\Twilnil(\AAA)$. In particular, an $A_{[0,\infty[}$-algebra $A$ with $\mu_0 \neq 0$ has $\Twilnil(\AAA)_{\infty} = 0$. This illustrates the poor ``derivability'', in general, of $A_{[0,\infty[}$-algebras.
\end{remark}

The following theorem, which immediately follows from Proposition \ref{bracesect}, is a refinement of \cite[Theorem 4.4.1]{lowenvandenbergh2}.

\begin{theorem}
Let $\AAA$ be a dg category. Then $\Tw(\AAA) = \Twilnil(\AAA)_{\infty}$ is a dg category which is dg equivalent to the category of semifree dg modules over $\AAA$. The canonical projection $\pi: \CC(\Tw(\AAA)) \lra \CC(\AAA)$ has a $B_{\infty}$-section $$\mathrm{embr}_{\delta}: \CC(\AAA) \lra \CC(\Tw(\AAA)): \phi \longmapsto \sum_{m = 0}^{\infty}\phi\{\delta^{\otimes m}\}$$
which is an inverse in the homotopy category of $B_{\infty}$-algebras. In particular, both $\pi$ and $\mathrm{embr}_{\delta}$ are quasi-isomorphisms. 
\end{theorem}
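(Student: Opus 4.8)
The plan is to establish the three assertions in the natural logical order: first that $\Tw(\AAA) = \Twilnil(\AAA)_{\infty}$ is genuinely a dg category, then that $\embr_{\delta}$ is a well-defined $B_{\infty}$-section, and finally that $\pi$ and $\embr_{\delta}$ are mutually inverse up to homotopy (whence both are quasi-isomorphisms). The structural input is almost entirely assembled from the preceding sections; the novelty here is checking that for a \emph{dg} base the transported structure stays dg, and then upgrading the already-established brace section to a genuine $B_{\infty}$-quasi-isomorphism.

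First I would invoke Proposition \ref{formulae}(3). Since $\AAA$ is a dg category, its structure is $\mu = d + m$ with $\mu_0 = 0$. Passing to the $\infty$-part $\Twilnil(\AAA)_{\infty}$ means, by Definition \ref{inftypart}, restricting to those twisted objects $(M,\delta_M)$ whose curvature coefficient of the transported structure vanishes; by the explicit formula in Proposition \ref{formulae}(2), that curvature coefficient is $c + d\{\delta_M\} + m\{\delta_M,\delta_M\}$, which here (as $c = 0$) is exactly the Maurer--Cartan condition $d\{\delta_M\} + m\{\delta_M,\delta_M\} = 0$. Hence on $\Tw(\AAA)$ the hypothesis of Proposition \ref{formulae}(3) holds objectwise, and $\embr_{\delta}(\mu) = (d + m\{\delta\}) + m$ is a dg structure, with no curvature and no higher components. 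This gives the first sentence of the theorem. The identification with semifree dg modules I would take over from the cited \cite[Theorem 4.4.1]{lowenvandenbergh2} (or its example in the preceding subsection), since the theorem is explicitly advertised as a refinement of that result; the dg equivalence itself is not the content being refined here.

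Next, that $\embr_{\delta}$ is a brace algebra section of $\pi$ is exactly Proposition \ref{bracesect}, and that it is a $B_{\infty}$-morphism once $\mu$ is dg is Proposition \ref{formulae}(1) applied with the source $A_{[0,\infty[}$-structure $\mu$ and target $\embr_{\delta}(\mu)$. So the only genuinely new claim is that $\embr_{\delta}$ and $\pi$ are inverse in the homotopy category of $B_{\infty}$-algebras. The relation $\pi \circ \embr_{\delta} = \mathrm{id}_{\CC(\AAA)}$ is immediate from Proposition \ref{bracesect}, since $\embr_{\delta}$ is a section of $\pi$ and the $m=0$ term of $\phi\{\delta^{\otimes m}\}$ restricts back to $\phi$. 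The substantive direction is the other composite $\embr_{\delta}\circ \pi \simeq \mathrm{id}_{\CC(\Tw(\AAA))}$, which I expect to be the main obstacle. The cleanest route is to argue on cohomology rather than to produce an explicit $B_{\infty}$-homotopy: since $\pi\embr_{\delta} = \mathrm{id}$, the map $\pi$ is already a surjective quasi-isomorphism provided $\embr_{\delta}$ is one, so it suffices to show that one of the two induces an isomorphism on Hochschild cohomology. Here I would use that $\Tw(\AAA) = \Twilnil(\AAA)_{\infty}$ is a dg model for the derived category $D(\AAA)$ and that the inclusion $\AAA \subset \Tw(\AAA)$ is a \emph{quasi-equivalence} of dg categories (every object of $\Tw(\AAA)$ is a twisted complex of shifts of objects of $\AAA$, hence built from $\AAA$ by shifts and cones), and then invoke the derived-Morita/quasi-equivalence invariance of Hochschild cohomology to conclude that $\pi$ is a quasi-isomorphism.

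\textbf{The hard part.} The delicate point is promoting a quasi-isomorphism of the underlying complexes to an equivalence in the homotopy category of $B_{\infty}$-\emph{algebras}: a priori one needs an inverse that respects the whole $B_{\infty}$-structure, not merely the differential. The mechanism that makes this painless is that $\embr_{\delta}$ is an honest $B_{\infty}$-morphism (by Proposition \ref{formulae}(1)) and a strict section of $\pi$, so the two composites differ from identities only in one direction; a quasi-isomorphism of $B_{\infty}$-algebras that admits a strict one-sided inverse is automatically invertible in the homotopy category, because any $B_{\infty}$-quasi-isomorphism is known to be invertible up to homotopy (the $B_{\infty}$-analogue of the standard $A_{\infty}$ inverse-morphism theorem of \cite{lefevre}). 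Thus the whole statement reduces to the single numerical fact that $\embr_{\delta}$ (equivalently $\pi$) is a quasi-isomorphism of complexes, which in turn follows from the quasi-equivalence $\AAA \hookrightarrow \Tw(\AAA)$ together with the invariance of Hochschild cohomology under quasi-equivalence. I would therefore spend the bulk of the argument justifying that quasi-equivalence explicitly and citing the invariance result, and dispatch the $B_{\infty}$-upgrade by the inverse-morphism theorem.
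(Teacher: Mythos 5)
Your overall architecture matches the paper's: the theorem is stated there as following ``immediately from Proposition \ref{bracesect}'' as a refinement of \cite[Theorem 4.4.1]{lowenvandenbergh2}, i.e.\ the only new content is the explicit strict $B_{\infty}$-section $\embr_{\delta}$, while the fact that $\pi$ is a $B_{\infty}$-quasi-isomorphism is imported wholesale from the cited result. Your treatment of the dg structure on $\Twilnil(\AAA)_{\infty}$ via Proposition \ref{formulae}, of the section property via Proposition \ref{bracesect}, and of the formal upgrade (a strict one-sided inverse of a quasi-isomorphism is a two-sided inverse in the localization) are all fine and consistent with the paper.

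The genuine gap is in your justification that $\pi$ is a quasi-isomorphism. You assert that the inclusion $\AAA \subset \Tw(\AAA) = \Twilnil(\AAA)_{\infty}$ is a \emph{quasi-equivalence} because every object of $\Tw(\AAA)$ is built from shifts of objects of $\AAA$ by cones. This is false: being generated by $\AAA$ under shifts, cones and (infinite) coproducts does not make $H^0$ of the inclusion essentially surjective. Already for $\AAA = k$ one has $H^0(\Tw(\AAA)) \cong D(k)$, which is much larger than $H^0(\AAA)$; a direct sum of shifts of $k$ is not isomorphic to $k$. So quasi-equivalence invariance of Hochschild cohomology does not apply. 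What is true, and what you need, is the stronger invariance under fully faithful dg functors whose image compactly/properly generates the target (derived Morita invariance, in Keller's sense); the inclusion $\AAA \hookrightarrow \Tw(\AAA)$ induces an equivalence on derived module categories, and that is precisely the content of \cite[Theorem 4.4.1]{lowenvandenbergh2}, which the paper cites rather than reproves. Your argument is repairable by replacing ``quasi-equivalence'' with this Morita-type invariance (or simply by citing the earlier theorem, as the paper does), but as written the justification of the one numerical fact your whole proof reduces to does not go through.
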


\subsection{(Pre)complexes over linear categories}\label{exprecompl}

Next we apply Proposition \ref{bracesect} to categories of (pre)complexes. Let $(\AAA, m)$ be a linear category.
Consider the quiver $\Tw_{\mathsf{pre}}(\AAA)$ with as objects 
$$(M = \oplus_{i \in \Z}\Sigma^{i}A_i, \delta_M)$$ 
with $\delta_M \in \Free(\AAA)(M,M)^1$. For another $(N = \oplus_{i \in \Z}\Sigma^{i}B_i, \delta_N) $, since $\AAA$ is concentrated in degree zero, we have $\Tw_{\mathsf{pre}}(\AAA)(M,N)^n =  \prod_{i\in \Z}\AAA(A_i,B_{i-n})$. If we change to cohomological notation $A^i = A_{-i}$, we have
$$\Tw_{\mathsf{pre}}(\AAA)(M,N)^n =  \prod_{i\in \Z}\AAA(A^i,B^{i + n})$$
By Example \ref{exdegzero}, $\Tw_{\mathsf{pre}}(\AAA)$ is a quiver of locally nilpotent twisted objects over $\AAA$.
According to Proposition \ref{formulae}, the corresponding $A_{[0,\infty[}$-structure on $\Tw_{\mathsf{pre}}(\AAA)$ is $\embr_{\delta}(m) = m\{\delta, \delta\} + m\{\delta\} + m$ with
$$m\{ \delta, \delta \} = - \delta^2$$
and $$m \{ \delta \} = m(\delta \otimes 1 - 1 \otimes \delta)$$
Hence $\Tw_{\mathsf{pre}}(\AAA)$ is precisely the cdg category $\PCom(\AAA)$ of precomplexes of $\AAA$-objects of example \ref{pcomexample}.
The category $\Tw_{\mathsf{com}}(\AAA) = \Tw_{\mathsf{pre}}(\AAA)_{\infty}$ is the dg category $\Com(\AAA)$ of complexes of $\AAA$-objects.

Consider the inclusions
$$\AAA \subset \Com^+(\AAA) \subset \Com(\AAA)$$
The following is implicit in \cite{lowenvandenbergh2}:

\begin{proposition}
The canonical projection $\pi: \CC(\Com^+(\AAA)) \lra \CC(\AAA)$ is a $B_{\infty}$-quasi-isomorphism.
\end{proposition}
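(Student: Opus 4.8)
The plan is to show that the projection $\pi \colon \CC(\Com^+(\AAA)) \lra \CC(\AAA)$ is a quasi-isomorphism, and then invoke the fact (already available from the preceding sections) that $\pi$ is a $B_\infty$-morphism to upgrade a plain quasi-isomorphism into a $B_\infty$-quasi-isomorphism. Recall that a $B_\infty$-morphism is, in particular, a morphism of the underlying complexes (the $n=1$ Taylor coefficient is the map of Hochschild complexes, and its compatibility with the differential $D_1 = \langle b,-\rangle$ is exactly the statement that $\pi$ commutes with the Hochschild differential). Thus ``$B_\infty$-quasi-isomorphism'' means precisely that $\pi$ is a $B_\infty$-morphism \emph{and} a quasi-isomorphism, so the whole content is the quasi-isomorphism of complexes.

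First I would set up the comparison correctly. The inclusion $\AAA \subset \Com^+(\AAA)$ realizes $\AAA$ as the full subquiver on objects concentrated in degree zero (with zero twisting), so $\pi = \pi_\BBB$ is the canonical restriction brace-algebra morphism of \S\ref{limfunct}, hence a $B_\infty$-morphism once $\Com^+(\AAA)$ carries the structure $\embr_\delta(m)$; by the computation just above, that structure is the usual dg structure $d + m\{\delta\} + m$ of complexes, with $c = m\{\delta,\delta\} = -\delta^2 = 0$ since $\Com^+(\AAA) = \Tw_{\mathsf{com}}^+(\AAA)$ lies in the $\infty$-part. So the target is genuinely a dg category and $\pi$ is a $B_\infty$-morphism. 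It remains only to check it induces an isomorphism on Hochschild cohomology.

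For the quasi-isomorphism itself, my main tool is the section $\embr_\delta$ of Proposition~\ref{bracesect}, which already gives $\pi \circ \embr_\delta = \mathrm{id}_{\CC(\AAA)}$; this makes $\pi$ split surjective on cohomology, so the only real work is injectivity, i.e.\ that $\embr_\delta \circ \pi$ is a quasi-isomorphism (equivalently homotopic to the identity) on $\CC(\Com^+(\AAA))$. The natural strategy is the standard filtration/spectral-sequence argument for the bar-type bicomplex: using the bigrading $\CC^{i,n}$ with vertical differential $d_v = [d,-]$ and horizontal differential $d_h = [m,-]$, one filters by arity (the horizontal degree $n$) and computes the $E_1$-page, where the differential is induced purely by the internal differentials $d$ of the complexes of injectives/objects. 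The key point is that $\Com^+(\AAA)$ consists of \emph{bounded-below} complexes, and this boundedness is exactly what guarantees that the relevant products/coproducts behave well and that a contracting homotopy exists on each column: because objects are concentrated in degree zero in $\AAA$ but spread out (boundedly below) in $\Com^+(\AAA)$, the comparison of the two Hochschild complexes reduces, column by column, to the quasi-isomorphism $\AAA(A,B) \to \Com^+(\AAA)(C,D)$ expressing morphism complexes, where boundedness below makes the Hom-complexes acyclic in the required range. I would therefore construct an explicit contracting homotopy in the $\delta$-direction (a homotopy witnessing that $e^\delta$ is ``invertible''), mirroring the dg-module argument of \cite[Theorem 4.4.1]{lowenvandenbergh2}, and check that it is compatible with the filtration so that the spectral sequence degenerates to give the isomorphism on cohomology.

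The hard part will be the convergence/boundedness bookkeeping rather than any conceptual difficulty: since $\Com^+(\AAA)$ contains complexes that are infinite (bounded below but unbounded above), one must verify that the filtration by arity is exhaustive and complete and that the associated spectral sequence converges, which is where the bounded-below hypothesis is essential and where a naive argument with $\Com(\AAA)$ would fail. Concretely, I expect the delicate step to be showing that the homotopy built from $\delta$ is well-defined as a \emph{pointwise} (locally finite) series on the morphism spaces $\Tw_{\mathsf{pre}}(\AAA)(M,N)^n = \prod_{i}\AAA(A^i,B^{i+n})$ and descends through the completion $\hat{B}\CC$ of \S\ref{partopo}; once that locality is in hand, the column-wise acyclicity and the spectral-sequence comparison are routine, and combined with the already-established identity $\pi \circ \embr_\delta = \mathrm{id}$ they yield that $\pi$ is a quasi-isomorphism, hence a $B_\infty$-quasi-isomorphism.
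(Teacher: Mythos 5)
Your reduction of the problem is sound as far as it goes: $\pi$ is the restriction along the full inclusion $\AAA \subset \Com^+(\AAA)$, hence a $B_{\infty}$-morphism, and Proposition \ref{bracesect} already gives the section $\embr_{\delta}$ with $\pi \circ \embr_{\delta} = \mathrm{id}$, so only injectivity on cohomology is at stake. But the core of your argument --- the arity filtration and the claimed ``column by column'' reduction --- does not work. The $n$-th column of $\CC(\Com^+(\AAA))$ is a product over all $(n+1)$-tuples of objects of $\Com^+(\AAA)$, whereas that of $\CC(\AAA)$ is a product over tuples of objects of $\AAA$; on each column $\pi$ is merely the projection onto the small subset of factors indexed by tuples from $\AAA$, which is nowhere near a column-wise quasi-isomorphism. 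There is also no map ``$\AAA(A,B) \lra \Com^+(\AAA)(C,D)$'' for general $C,D$, and these Hom-complexes are not acyclic in any useful range: for bounded below $C,D$ the complex $\prod_i \AAA(C^i,D^{i+n})$ can have cohomology in all degrees. What the $E_1$-page of your spectral sequence actually computes is the analogous restriction map for the graded homotopy category, $\CC(K^+(\AAA)_{\mathrm{gr}}) \lra \CC(\AAA)$, which is no easier than the original statement; the arity filtration cannot see the agreement. Finally, the bounded-below hypothesis does not enter through convergence bookkeeping: its real role is that, after passing to opposites, the objects of $\Com^-(\AAA\op)$ are \emph{cofibrant} (h-projective) dg $\AAA\op$-modules, which fails for unbounded complexes of representables.

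That cofibrancy is exactly the hypothesis of \cite[Theorem 4.4.1]{lowenvandenbergh2}, and the paper's proof is a short reduction to it: the composite $\AAA\op \lra \Com^-(\AAA\op) \lra \Mod_{\mathrm{dg}}(\AAA\op)$ exhibits $\Com^-(\AAA\op)$ as a full subcategory of cofibrant modules containing the representables, so the restriction $\CC(\Com^-(\AAA\op)) \lra \CC(\AAA\op)$ is a $B_{\infty}$-quasi-isomorphism, and $\pi$ is the opposite of this map. If you insist on a self-contained argument you would have to reprove the content of that theorem (a resolution argument identifying both sides with a derived Hom of bimodules), not merely transplant a homotopy built from $\delta$; no such homotopy compatible with the arity filtration is available.
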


\begin{proof}
Consider the canonical morphisms
$$\AAA^{\op} \lra \Com^-(\AAA^{\op}) \lra \Com^-(\Mod(\AAA^{\op})) \lra \Com(\Mod(\AAA^{\op})) = \Mod_{\mathrm{dg}}(\AAA^{\op})$$ 
A complex in $\Com^-(\AAA^{\op})$ gets mapped to a cofibrant object in $\Mod_{\mathrm{dg}}(\AAA^{\op})$. Consequently, by \cite[Theorem 4.4.1]{lowenvandenbergh2}, the first map induces a $B_{\infty}$-quasi-isomorphism. The result follows since $\pi$ is induced by the opposite of this map.
\end{proof}

\begin{theorem}\label{anotherembr}
The canonical projection $\pi: \CC(\PCom(\AAA)) \lra \CC(\AAA)$ has a $B_{\infty}$-section 
$$\mathrm{embr}_{\delta}: \CC(\AAA) \lra \CC(\PCom(\AAA)): \phi_n \longmapsto \sum_{m=0}^n\phi_n\{\delta^{\otimes m}\}$$
The restrictions of both maps to $\CC(\Com^+(\AAA))$ are inverse isomorphisms in the homotopy category of $B_{\infty}$-algebras. In particular, they are both quasi-isomorphisms.
\end{theorem}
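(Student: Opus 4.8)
The plan is to read off the first assertion directly from the transport machinery of \S\ref{partrans}, and then to deduce the homotopy-invertibility purely formally from the quasi-isomorphism established in the preceding proposition.

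First I would note that, under the identification of \S\ref{exprecompl}, $\PCom(\AAA)$ is exactly the quiver of locally nilpotent twisted objects over $\AAA$ (Example~\ref{exdegzero}), so Proposition~\ref{bracesect} already produces a brace algebra section $\embr_{\delta}$ of $\pi$. Because $\AAA$ is concentrated in degree zero and $\delta$ has arity zero, inserting $m$ copies of $\delta$ into a cochain of arity $n$ yields a cochain of arity $n-m$, and the empty sum for $m>n$ shows that the series $\sum_{m}\phi_n\{\delta^{\otimes m}\}$ collapses to the stated finite sum $\sum_{m=0}^{n}\phi_n\{\delta^{\otimes m}\}$. Viewing the linear structure $m$ on $\AAA$ as a degenerate dg structure, Proposition~\ref{formulae} identifies $\embr_{\delta}(m)$ with the cdg structure on $\PCom(\AAA)$ computed in \S\ref{exprecompl} and promotes $\embr_{\delta}$ to a $B_{\infty}$-morphism; since $\pi$ is the restriction morphism attached to the full subquiver $\AAA\subset\PCom(\AAA)$ (viewing $A$ as the precomplex concentrated in degree $0$ with $\delta=0$), it too is a $B_{\infty}$-morphism by \S\ref{limfunct} and Proposition~\ref{transfer}, so $\embr_{\delta}$ is a genuine $B_{\infty}$-section.

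For the second assertion I would factor $\pi$ through the inclusions $\AAA\subset\Com^{+}(\AAA)\subset\PCom(\AAA)$. Writing $\rho\colon\CC(\PCom(\AAA))\to\CC(\Com^{+}(\AAA))$ for the restriction to the full subquiver $\Com^{+}(\AAA)$, this $\rho$ is a $B_{\infty}$-morphism, and the restricted structure is genuinely dg, since the curvature $-\delta^{2}$ vanishes on $\Com(\AAA)=\PCom(\AAA)_{\infty}$. One then has $\pi=\pi|_{\Com^{+}}\circ\rho$, so $\rho\circ\embr_{\delta}$ is a $B_{\infty}$-section of $\pi|_{\Com^{+}}\colon\CC(\Com^{+}(\AAA))\to\CC(\AAA)$; this composite is precisely the restriction of $\embr_{\delta}$ to $\CC(\Com^{+}(\AAA))$ appearing in the statement.

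Finally, the preceding proposition gives that $\pi|_{\Com^{+}}$ is a $B_{\infty}$-quasi-isomorphism. From $\pi|_{\Com^{+}}\circ(\rho\circ\embr_{\delta})=\mathrm{id}$ and the fact that $\pi|_{\Com^{+}}$ is an isomorphism on cohomology, two-out-of-three shows that the section $\rho\circ\embr_{\delta}$ is a quasi-isomorphism as well; in the homotopy category of $B_{\infty}$-algebras, where quasi-isomorphisms are inverted, the relation $\pi|_{\Com^{+}}\circ(\rho\circ\embr_{\delta})=\mathrm{id}$ then forces the two to be mutually inverse isomorphisms. The only substantive input is the preceding proposition (resting on \cite{lowenvandenbergh2} and the boundedness needed for cofibrancy), so the expected obstacle is not analytic but bookkeeping: one must check carefully that restriction along $\Com^{+}\subset\PCom$ is compatible with the $B_{\infty}$-structures, i.e. that passing to the $\infty$-part annihilates the curvature, so that the restricted $\embr_{\delta}$ genuinely lands in the dg category $\Com^{+}(\AAA)$ as a $B_{\infty}$-section.
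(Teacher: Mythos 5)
Your proposal is correct and assembles the theorem in essentially the same way the paper does: the paper gives no separate proof but intends exactly this chain — Example~\ref{exdegzero} plus Proposition~\ref{bracesect} and Proposition~\ref{formulae} for the $B_{\infty}$-section (with the finite sum coming from the degree-zero concentration of $\AAA$), followed by restriction along $\Com^{+}(\AAA)\subset\PCom(\AAA)$ and the preceding proposition together with a two-out-of-three argument to get the inverse isomorphisms in the homotopy category. No gaps.
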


\subsection{Abelian categories}\label{parabcat}
The results of the previous section have an immediate application to abelian categories.
Let $\aaa$ be an abelian category. In \cite{lowenvandenbergh2}, the Hochschild complex of $\aaa$ is defined to be 
$$\CC_{\mathrm{ab}}(\aaa) = \CC(\Inj(\Ind(\aaa))$$

Let $\aaa$ be an abelian category with enough injectives and put $\III = \Inj(\aaa)$. By \cite[Theorem 6.6]{lowenvandenbergh2}, we have
$$\CC_{\mathrm{ab}}(\aaa) \cong \CC(\III)$$
and it will be convenient to actually take this as definition of $\CC_{\mathrm{ab}}(\aaa)$. 

The dg category $\Com^+(\III)$ of bounded below complexes of injectives is a dg model for the bounded below derived category $D^+(\aaa)$ of $\aaa$, whence the notation $D^+_{\mathrm{dg}}(\aaa) = \Com^+(\III)$.
In the spirit of \cite{keller6}, put $\CC_{\mathrm{ex}}(\aaa) = \CC(D^+_{\mathrm{dg}}(\aaa))$.
With $\AAA = \III$, Theorem \ref{anotherembr} now yields:

\begin{theorem}\label{anotherembr2}
The canonical projection $\pi: \CC(\Com^+(\III)) \lra \CC(\III)$ has a $B_{\infty}$-section 
$$\mathrm{embr}_{\delta}: \CC(\III) \lra \CC(\Com^+(\III)): \phi_n \longmapsto \sum_{m=0}^n\phi_n\{\delta^{\otimes m}\}$$
which is an inverse in the homotopy category of $B_{\infty}$-algebras. In particular, both $\pi$ and $\mathrm{embr}_{\delta}$ are quasi-isomorphisms establishing
$\CC_{\mathrm{ab}}(\aaa) \cong \CC_{\mathrm{ex}}(\aaa)$.
\end{theorem}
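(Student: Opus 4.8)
The plan is to deduce Theorem~\ref{anotherembr2} as the special case $\AAA = \III$ of Theorem~\ref{anotherembr}, together with the two definitional identifications of the Hochschild complexes in play. First I would note that $\III = \Inj(\aaa)$ is an ordinary $k$-linear category, i.e.\ a quiver concentrated in degree zero equipped with an associative composition $m$; this is precisely the standing hypothesis of Theorem~\ref{anotherembr}. Hence that theorem applies verbatim with $\AAA = \III$, producing the projection $\pi\colon \CC(\PCom(\III)) \lra \CC(\III)$ and its brace algebra section $\embr_{\delta}$.

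Next I would make the formula for $\embr_{\delta}$ explicit in this degree-zero situation. Since $\III$ is concentrated in degree zero, Example~\ref{exdegzero} shows that $\PCom(\III)$ is a quiver of locally nilpotent twisted objects over $\III$, so the section of Proposition~\ref{bracesect} exists with no convergence subtlety. Moreover $\delta$ has arity $0$, so inserting $m$ copies of $\delta$ into a cochain $\phi_n$ of arity $n$ vanishes once $m > n$; thus the a priori infinite sum $\embr_{\delta}(\phi) = \sum_{m=0}^{\infty}\phi\{\delta^{\otimes m}\}$ collapses to the finite sum $\sum_{m=0}^{n}\phi_n\{\delta^{\otimes m}\}$ recorded in the statement.

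I would then restrict attention to the dg subcategory $D^+_{\mathrm{dg}}(\aaa) = \Com^+(\III) \subset \PCom(\III)$. The content of Theorem~\ref{anotherembr} is exactly that, after restriction to $\CC(\Com^+(\III))$, the maps $\pi$ and $\embr_{\delta}$ become mutually inverse in the homotopy category of $B_{\infty}$-algebras, and in particular quasi-isomorphisms; this is the central assertion of Theorem~\ref{anotherembr2}. It then remains only to translate notation: by definition $\CC_{\mathrm{ex}}(\aaa) = \CC(D^+_{\mathrm{dg}}(\aaa)) = \CC(\Com^+(\III))$, while $\CC_{\mathrm{ab}}(\aaa) \cong \CC(\III)$ by \cite[Theorem~6.6]{lowenvandenbergh2}, which we have adopted as the definition of $\CC_{\mathrm{ab}}(\aaa)$. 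Substituting these into the homotopy equivalence gives the claimed $B_{\infty}$-isomorphism $\CC_{\mathrm{ab}}(\aaa) \cong \CC_{\mathrm{ex}}(\aaa)$.

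Since the analytic work is already carried out in Theorem~\ref{anotherembr}, I do not expect a genuine obstacle; the theorem is a specialization wrapped around bookkeeping of definitions. The one point meriting a word of justification is that the section $\embr_{\delta}$, a priori valued in the \emph{curved} category $\PCom(\III)$, does restrict to the \emph{dg} category $\Com^+(\III)$. This holds because $\Com^+(\III) = \PCom(\III)_{\infty}$ is the $\infty$-part, on which the curvature term $m\{\delta,\delta\} = -\delta^2$ of the transported structure (Proposition~\ref{formulae}(2)) vanishes; hence $\embr_{\delta}(m)$ is a genuine dg structure there, which is exactly what Theorem~\ref{anotherembr} (via Propositions~\ref{transfer} and~\ref{formulae}) requires.
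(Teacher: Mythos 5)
Your proposal is correct and follows exactly the paper's route: the paper proves Theorem~\ref{anotherembr2} by simply specializing Theorem~\ref{anotherembr} to $\AAA = \III$ and invoking the definitions $\CC_{\mathrm{ab}}(\aaa) = \CC(\III)$ and $\CC_{\mathrm{ex}}(\aaa) = \CC(\Com^+(\III))$. Your additional remarks (finiteness of the sum via Example~\ref{exdegzero}, and the restriction to the $\infty$-part $\Com^+(\III) = \PCom(\III)_{\infty}$ via Proposition~\ref{formulae}) are sound and merely make explicit what the paper leaves implicit.
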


\section{Deformations}\label{pardefo}

This chapter consists largely of applications of Theorem \ref{anotherembr}. We first recall some facts on deformations and the graded centre enabling us to define, in \S \ref{parchar}, the characteristic dg morphism of a linear category, and to show its relation to deformation theory in Theorem \ref{maintheorem}. The remainder of the chapter is devoted to some applications to deformations of (enhanced) derived categories of abelian categories.

Throughout we focus on \emph{first order} deformations, i.e. deformations along $k[\epsilon] \lra k$, since they are in the most direct correspondence with Hochschild cohomology. All definitions can be given for arbitrary deformations, and in the classical setting of an Artin local algebra $R$ over a field $k$ of characteristic zero (with maximal ideal $m$), the deformation theory is governed by the Maurer-Cartan equation in the Hochschild complex (tensored by $m$).

From now on, $k$ will be a field.

\subsection{Deformations of linear and abelian categories}

The deformation theory of linear and abelian categories was developed in \cite{lowenvandenbergh1} as a natural extension of Gerstenhaber's deformation theory of algebras \cite{gerstenhaber}. In this section we recall the main definitions. For a commutative ring $R$, let $\cat(R)$ denote the (large) category of $R$-linear categories. The forgetful functor $\cat(k) \lra \cat(k[\epsilon])$ has the left adjoint
$$k \otimes_{k[\epsilon]} -: \cat(k[\epsilon]) \lra \cat(k)$$
and the right adjoint
$$\Hom_{k[\epsilon]}(k,-): \cat(k[\epsilon]) \lra \cat(k)$$
where $\Hom_{k[\epsilon]}$ denotes the category of $k[\epsilon]$-linear functors. Clearly, for $\bbb \in \cat(k[\epsilon])$, there is a canonical inclusion functor $\Hom_{k[\epsilon]}(k,\bbb) \lra \bbb$ identifying $\Hom_{k[\epsilon]}(k,\bbb)$ with the full subcategory of objects $B \in \bbb$ for which $\epsilon: B \lra B$ is equal to zero.

In \cite{lowenvandenbergh1}, a notion of flatness for \emph{abelian} $R$-linear categories is defined which is such that an $R$-linear category $\AAA$ is flat (in the sense that it has $R$-flat hom-modules) if and only if the module category $\Mod(\AAA)$ is flat as an abelian category.

\begin{definition}
\begin{enumerate}
\item  Let $\AAA$ be a $k$-linear category. A first order \emph{linear deformation} of $\AAA$ is a flat $k[\epsilon]$-linear category $\BBB$ together with an isomorphism $k \otimes_{k[\epsilon]} \BBB \cong \AAA$ in $\cat(k)$.
\item Let $\aaa$ be an abelian $k$-linear category. A first order \emph{abelian deformation} of $\aaa$ is a flat abelian $k[\epsilon]$-linear category $\bbb$ together with an equivalence of categories $\aaa \cong \Hom_{k[\epsilon]}(k, \bbb)$.
\end{enumerate}
\end{definition}

We will denote the natural groupoids of linear deformations of $\AAA$ and of abelian deformations of $\aaa$ by $\Def_{\AAA}(k[\epsilon])$ and $\text{ab}-\Def_{\aaa}(k[\epsilon])$ respectively. In \cite{lowenvandenbergh1}, the notations $\ddef^s_{\AAA}$ and $\Def_{\aaa}$ are used and the terminology \emph{strict} deformation is used in the linear case.

The following proposition extends the well known result for algebras:

\begin{proposition}
Let $\AAA$ be a $k$-linear category.
There is a map $$Z^2\CC(\AAA) \lra \Ob(\Def_{\AAA}(k[\epsilon]))$$ which induces a bijection
$$HH^2(\AAA) \lra \Sk(\Def_{\AAA}(k[\epsilon]))$$
\end{proposition}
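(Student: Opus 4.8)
The plan is to reduce the statement about deformations of the linear category $\AAA$ to the Gerstenhaber deformation theory of algebras, realized now at the level of the Hochschild complex $\CC(\AAA)$. Concretely, a first order linear deformation of $\AAA$ is a flat $k[\epsilon]$-linear category $\BBB$ with $k \otimes_{k[\epsilon]} \BBB \cong \AAA$; by flatness each hom-module $\BBB(A,A')$ is free over $k[\epsilon]$ on a chosen $k$-basis identified via the isomorphism with $\AAA(A,A')$, so as a $k$-module $\BBB(A,A') \cong \AAA(A,A') \oplus \epsilon\AAA(A,A')$. The composition in $\BBB$, being $k[\epsilon]$-bilinear and reducing modulo $\epsilon$ to the composition $m$ of $\AAA$, is therefore determined by its $\epsilon$-component, which is a $k$-bilinear map $m_c : \AAA(A_1,A_2) \otimes \AAA(A_0,A_1) \lra \AAA(A_0,A_2)$, i.e. an element $c \in \CC^{0,2}(\AAA) \subset \CC^2(\AAA)$ in the notation of the paper. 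First I would carry out this correspondence carefully and identify associativity of the deformed composition with the Maurer--Cartan condition, which for a first order deformation (where the $\epsilon^2$ term vanishes) is exactly the cocycle condition $d(c) = 0$; this produces the map $Z^2\CC(\AAA) \lra \Ob(\Def_{\AAA}(k[\epsilon]))$.

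Next I would show this map descends to a well-defined and bijective map on the quotient $HH^2(\AAA) = Z^2/B^2$ to the set $\Sk(\Def_{\AAA}(k[\epsilon]))$ of isomorphism classes (skeleton) of the deformation groupoid. The key observation is that an isomorphism of deformations $\BBB \cong \BBB'$ fixing $\AAA$ modulo $\epsilon$ is given by a family of $k[\epsilon]$-linear maps which, again by flatness, is determined by a $k$-linear degree-one datum $h \in \CC^{0,1}(\AAA)$, of the form $\mathrm{id} + \epsilon h$ on each hom-object. Tracking how such an isomorphism conjugates the deformed composition, one finds that the two cocycles $c$ and $c'$ differ precisely by the Hochschild coboundary $d(h) = \langle m, h\rangle$. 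Thus isomorphic deformations correspond to cohomologous cocycles and conversely, giving a bijection between $HH^2(\AAA)$ and isomorphism classes of deformations. Here I would lean on the horizontal Hochschild differential described in the paper, which up to sign generalizes the classical Hochschild differential for associative algebras, so that the computations $d(c) = 0$ and the coboundary formula $c' - c = d(h)$ are literally the bar-complex identities in the ``several objects'' setting.

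The main obstacle I anticipate is bookkeeping for the several-objects (ring with several objects) structure together with the Koszul sign conventions of \S\ref{parsign}: one must verify that the flat deformed category really is controlled by a single cochain in $\CC^{0,2}(\AAA)$ rather than by extra data, and that the identity morphisms (units) deform compatibly, which may force a normalization of $c$ and $h$ (vanishing on identities). Flatness is what guarantees the hom-modules are free of the expected rank, so no information beyond the $\epsilon$-component survives; I would make this precise first, since it underlies both the surjectivity (every cocycle yields an associative, flat deformation, with associativity automatic from $d(c)=0$ because the obstruction $c \bullet c$ lands in the $\epsilon^2$-part and hence vanishes for first order deformations) and the injectivity of the induced map. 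The statement is the evident extension of Gerstenhaber's classical theorem, so beyond these structural checks the argument is expected to be routine, and I would cite \cite{lowenvandenbergh1} and \cite{gerstenhaber} for the details that are purely analogous to the algebra case.
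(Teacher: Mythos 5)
Your proposal is correct and follows essentially the same route as the paper: the paper's proof simply exhibits the map $\phi \mapsto \AAA_\phi[\epsilon] = (\AAA[\epsilon], m + \phi\epsilon)$ and leaves the Gerstenhaber-style verifications (cocycle $=$ first-order associativity, coboundary $=$ isomorphism of deformations via $\mathrm{id}+\epsilon h$, flatness forcing freeness of the hom-modules) implicit, which is exactly what you spell out.
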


\begin{proof}
Consider $\phi \in Z^2\CC(\AAA)$. The cocycle $\phi$ describes the corresponding linear deformation of $(\AAA, m)$ in the following way. Consider the quiver $\AAA[\epsilon] = k[\epsilon] \otimes_k \AAA$ over $k[\epsilon]$. The linear deformation of $\AAA$ is $\AAA_{\phi}[\epsilon] = (\AAA[\epsilon], m + \phi \epsilon)$.
\end{proof}

Finally we mention the following fundamental result of \cite{lowenvandenbergh2}, where the Hochschild cohomology of the abelian category $\aaa$ is as defined in \S \ref{parabcat}.

\begin{proposition}
Let $\aaa$ be a $k$-linear abelian category. There is a bijection 
$$HH^2_{ab}(\aaa) \lra \mathrm{ab}-\Def_{\aaa}(k[\epsilon])$$ 
\end{proposition}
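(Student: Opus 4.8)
The plan is to exhibit a bijection between $HH^2_{ab}(\aaa)$ and the set of abelian deformations $\mathrm{ab}\text{-}\Def_{\aaa}(k[\epsilon])$ by reducing, via the results already established in this paper, to the analogous linear statement for the category of injectives. Recall that we have taken $\CC_{ab}(\aaa) = \CC(\III)$ where $\III = \Inj(\aaa)$, so by definition $HH^2_{ab}(\aaa) = HH^2(\III)$. By the Proposition giving a bijection $HH^2(\AAA) \lra \Sk(\Def_{\AAA}(k[\epsilon]))$ applied to the $k$-linear category $\AAA = \III$, we already have a bijection from $HH^2(\III)$ to the skeleton of the groupoid of \emph{linear} deformations of $\III$. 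Thus the content of the proposition is the identification of linear deformations of $\III$ with abelian deformations of $\aaa$.

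The main step, which I would invoke from \cite{lowenvandenbergh1}, is the equivalence between the abelian deformation theory of $\aaa$ and the linear deformation theory of $\Inj(\aaa)$. Concretely, I would argue that the functor $\bbb \longmapsto \Inj(\bbb)$, sending a flat abelian $k[\epsilon]$-deformation $\bbb$ of $\aaa$ to its full subcategory of injective objects, produces a flat $k[\epsilon]$-linear deformation of $\III = \Inj(\aaa)$, and that conversely every linear deformation of $\III$ arises in this way up to equivalence. The essential inputs are: first, that injectives in $\bbb$ lift injectives in $\aaa$ and that $\Inj(\bbb)$ is $k[\epsilon]$-flat precisely because $\bbb$ is a flat abelian deformation; and second, that $\bbb$ can be reconstructed from $\Inj(\bbb)$ (for instance as a suitable subcategory of $\Mod(\Inj(\bbb))$, using that $\aaa$ has enough injectives so that $\aaa$ embeds into $\Mod(\III)$). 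Combining this equivalence of deformation groupoids with the linear case then transports the bijection $HH^2(\III) \lra \Sk(\Def_{\III}(k[\epsilon]))$ into the desired bijection $HH^2_{ab}(\aaa) \lra \mathrm{ab}\text{-}\Def_{\aaa}(k[\epsilon])$.

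I would be careful to check that the two maps are mutually inverse at the level of objects (up to the natural equivalences defining the groupoids) and that the passage $\bbb \mapsto \Inj(\bbb)$ respects the identifications $k \otimes_{k[\epsilon]} \Inj(\bbb) \cong \III$ coming from $k \otimes_{k[\epsilon]} \bbb \cong \Hom_{k[\epsilon]}(k,\bbb) \cong \aaa$. The hard part will be the bookkeeping ensuring that flatness is preserved in both directions and that the reconstruction of $\bbb$ from $\Inj(\bbb)$ is canonical enough to give a well-defined inverse on isomorphism classes; however, since this equivalence of deformation theories is exactly the content of the fundamental result of \cite{lowenvandenbergh1} that justifies the definition \eqref{firsteq}, the proof amounts to citing that theorem and combining it with the preceding Proposition, so I expect the write-up to be short.
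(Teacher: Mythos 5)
The paper itself offers no proof of this proposition: it is stated as ``the following fundamental result of \cite{lowenvandenbergh2}'' and simply cited. So there is no argument in the text to compare yours against line by line; what can be said is that your reduction --- identify $HH^2_{ab}(\aaa)$ with $HH^2(\Inj(\aaa))$, use the linear bijection $HH^2(\AAA) \to \Sk(\Def_{\AAA}(k[\epsilon]))$ for $\AAA = \Inj(\aaa)$, and then invoke the equivalence between abelian deformations of $\aaa$ and linear deformations of $\Inj(\aaa)$ from \cite{lowenvandenbergh1} --- is exactly the route the paper's introduction describes as ``justifying \eqref{firsteq}'', and is the intended one.

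Two points deserve more care than your sketch gives them. First, the proposition is stated for an \emph{arbitrary} $k$-linear abelian category $\aaa$, whereas the definition $\CC_{ab}(\aaa) = \CC(\Inj(\aaa))$ you start from is only available when $\aaa$ has enough injectives; in general the paper sets $\CC_{ab}(\aaa) = \CC(\Inj(\Ind(\aaa)))$, so your argument must be run for $\Ind(\aaa)$ (which does have enough injectives) and then combined with the further equivalence, also from \cite{lowenvandenbergh1}, between abelian deformations of $\aaa$ and abelian deformations of $\Ind(\aaa)$. As written, your proof covers only the enough-injectives case. Second, the groupoid $\Def_{\AAA}(k[\epsilon])$ of this paper consists of \emph{strict} linear deformations (the underlying object set is fixed and $k \otimes_{k[\epsilon]} \BBB \cong \AAA$ is an isomorphism in $\cat(k)$), while $\Inj(\bbb)$ is at best \emph{equivalent} to such a deformation of $\Inj(\aaa)$; the comparison of strict with non-strict linear deformations (possible because flat lifts of objects exist and are unique up to isomorphism) is an extra step of \cite{lowenvandenbergh1} that your ``bookkeeping'' remark gestures at but does not isolate. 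Neither point is fatal --- both are supplied by the cited reference --- but a complete write-up should name them explicitly rather than fold them into the single appeal to the equivalence of deformation theories.
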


\subsection{The centre of a graded category}\label{parcentre}

We recall the definition of the centre of a graded category (see also \cite[\S 3]{buchweitzflenner3}).

\begin{definition}
Let $\AAA$ be a graded category. The \emph{centre of $\AAA$} is the centre of $\AAA$ as a category enriched in $G(k)$, i.e. $$\ZZZ(\AAA) = \Hom(1_{\AAA}, 1_{\AAA})$$
where $1_{\AAA}: \AAA \lra \AAA$ is the identity functor and $\Hom$ denotes the graded module of graded natural transformations. 
\end{definition}

\begin{remark}
Explicitely, an element in $\ZZZ(\AAA)$ is given by an element $(\zeta_A)_A \in \prod_{A \in \AAA}\AAA(A,A)$ with the naturality property that for all $A, A' \in \AAA$, the following diagram commutes:
$$\xymatrix{{\AAA(A,A')} \ar[r]^-{\zeta \otimes 1} \ar[d]_-{1 \otimes \zeta} & {\AAA(A',A') \otimes \AAA(A,A')} \ar[d]^m \\ {\AAA(A,A') \otimes \AAA(A,A)} \ar[r]_-{m} & {\AAA(A,A')}}$$
In other words, for $f \in \AAA(A,A')$, $$\zeta_{A'}f = (-1)^{|f||\zeta|}f\zeta_A$$
\end{remark}

\begin{remark}
Let $\ttt$ be a \emph{suspended} linear category with suspension $\Sigma_{\ttt}: \ttt \lra \ttt$. There is an associated graded category $\ttt_{gr}$ with $\ttt_{gr}(T,T')^n = \ttt(T, \Sigma^n_{\ttt} T')$ and the \emph{graded centre of \ttt} is the centre of the graded category $\ttt_{gr}$.
If $\TTT$ is an exact dg category with associated triangulated category $\ttt = H^0\TTT$, we have $\ttt_{gr} = H^{\ast}\TTT$.
\end{remark}

\subsection{The characteristic dg morphism}\label{parchar}
It is well known that for a $k$-algebra $A$, there is a characteristic morphism of graded commutative algebras from the Hochschild cohomology of $A$  to the graded centre of the derived category $D(A)$.
This morphism is determined by the maps, for $M \in D(A)$,
$$M \otimes^{L}_A -: HH^{\ast}_k(A) \cong \Ext^{\ast}_{A^{\op} \otimes A}(A,A) \lra \Ext^{\ast}_A(M,M)$$
The characteristic morphism occurs for example in the theory of support varieties (\cite{buchweitzavramov}, \cite{erdmann}, \cite{solberg}).
Recently, Buchweitz and Flenner proved the existence of a characteristic morphism in the context of  \emph{morphisms} of schemes or analytic spaces (\cite{buchweitzflenner3}).

In \cite{lowenvandenbergh2}, it is observed that a characteristic morphism also exists for abelian categories. 
Let $\aaa$ be an abelian category with enough injectives, $\III = \Inj(\aaa)$ and $\Com(\III)$ the dg category of complexes of injectives. As asserted in Proposition \ref{projzero}, there is a morphism of differential graded objects
$$\pi_0: \CC(\Com(\III)) \lra \prod_{E \in \Com(\III)} \Com(\III)(E,E)$$
Taking cohomology of $\pi_0$ (where we restrict to $\Com^+(\III)$) and composing with the isomorphisms $HH^{\ast}_{\mathrm{ab}}(\aaa) \cong HH^{\ast}_{\mathrm{ex}}(\aaa)$ of Theorem \ref{anotherembr2}, we obtain the characteristic morphism
$$\chi_{\aaa}: HH^{\ast}_{\mathrm{ab}}(\aaa) \lra \ZZZ^{\ast}D^+(\aaa)$$
Using the $B_{\infty}$-section of Proposition \ref{anotherembr}, we can actually lift the characteristic morphism to the level of dg objects. In fact we can construct this lifted characteristic morphism for an arbitrary $k$-linear category $\AAA$ instead of $\III$.

\begin{definition}\label{defchardg}
Let $\AAA$ be a $k$-linear category. The \emph{characteristic dg morphism}
$$\CC(\AAA) \lra \prod_{C \in \Com(\AAA)} \Com(\AAA)(C,C)$$
is the composition of the $B_{\infty}$-morphism $\CC(\AAA) \lra \CC(\Com(\AAA))$ of Theorem \ref{anotherembr} and the projection on the zero part of Proposition \ref{projzero}.
Taking cohomology, we obtain the \emph{characteristic morphism}
$$HH^{\ast}(\AAA) \lra \ZZZ^{\ast}K(\AAA)$$
where $K(\AAA)$ is the homotopy category of complexes of $\AAA$-objects.
\end{definition}
In the next section we will interpret the characteristic morphism in terms of deformation theory.

\subsection{The characteristic morphism and obstructions}\label{parcharobs}
Let $\AAA$ be a $k$-linear category. In \cite{lowen2}, an obstruction theory is established for deforming objects of the homotopy category $K(\AAA)$. Let $c \in Z^2\CC(\AAA)$ be a Hochschild cocycle and $\AAA_c[\epsilon]$ the corresponding linear deformation. Consider the functor
$$k \otimes_{k[\epsilon]} - : K(\AAA_c[\epsilon]) \lra K(\AAA)$$
and consider $C \in K(\AAA)$. We will say that a \emph{(homotopy) $c$-deformation} of $C$ is a lift of $C$ along $k \otimes_{k[\epsilon]} -$. 
According to \cite[Theorem 5.2]{lowen2}, first order $c$-deformations of $C$ are governed by an obstruction theory involving $K(\AAA)(C,C[2])$ and $K(\AAA)(C,C[1])$.
In particular, the obstruction against $c$-deforming $C$ is an element $o_c \in K(\AAA)(C,C[2])$ depending on $c$, whereas $K(\AAA)(C,C[2])$ itself is independent of $c$. In the remainder of this section we show that the way in which the obstruction $o_c$ depends on $c$ is encoded in the characteristic morphism. 

\begin{theorem}\label{maintheorem}
Let $\AAA$ be a linear category and consider the characteristic morphism
$$\chi_{\AAA}: HH^{2}(\AAA) \lra \ZZZ^{2}K(\AAA)$$
We have $$\chi_{\AAA}(c) = (o_C)_{C \in K(\AAA)}$$
where $o_C \in K(\AAA)(C,C[2])$ is the obstruction to $c$-deforming $C$ into an object of $K(\AAA_c[\epsilon])$.
\end{theorem}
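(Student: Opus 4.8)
The plan is to compute both sides of the asserted equality $\chi_{\AAA}(c) = (o_C)_{C}$ explicitly at a fixed complex $C = (C,\delta_C) \in \Com(\AAA)$ and to check that they coincide. For the left hand side, recall from Definition \ref{defchardg} that $\chi_{\AAA}(c)_C$ is the cohomology class of the zero-arity part of $\embr_{\delta}(c)$ at $C$. Since $c \in \CC^{0,2}(\AAA)$ is purely of arity two while the canonical $\delta \in \CC^{1,0}(\Com(\AAA))$ has arity zero, inserting $m$ copies of $\delta$ into $c$ lowers the arity to $2-m$; hence the only contribution to $\pi_0 \embr_{\delta}(c)$ comes from $m=2$, namely $c\{\delta^{\otimes 2}\}$. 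By Lemma \ref{lembr} (with $i=0$, $n=2$, $j=1$) this equals $-c(\delta_C,\delta_C) \in \Com(\AAA)(C,C)^2$. Because $\embr_{\delta}$ is a $B_{\infty}$-morphism and $c$ is a Hochschild cocycle, $\embr_{\delta}(c)$ is again a cocycle, so by Proposition \ref{projzero} the element $-c(\delta_C,\delta_C)$ is closed for the differential of $\Com(\AAA)(C,C)$ and $\chi_{\AAA}(c)_C = [-c(\delta_C,\delta_C)] \in K(\AAA)(C,C[2])$.

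For the right hand side I would unwind the obstruction theory of \cite[Theorem 5.2]{lowen2}. Since $\AAA_c[\epsilon] = (\AAA[\epsilon], m + c\epsilon)$ is flat, the underlying graded object of any $c$-deformation of $C$ is isomorphic to the trivial lift $k[\epsilon]\otimes_k C$, so a first order $c$-deformation amounts to a differential of the form $\tilde{\delta} = \delta_C + \epsilon\,\delta^{(1)}$ with $\delta^{(1)} \in \Com(\AAA)(C,C)^1$, subject to $\tilde{\delta}\tilde{\delta} = 0$ computed in $\AAA_c[\epsilon]$. Expanding $(m+c\epsilon)(\tilde\delta,\tilde\delta)$ modulo $\epsilon^2$ and using $m(\delta_C,\delta_C)=0$, the first-order term is
\[
m(\delta^{(1)},\delta_C) + m(\delta_C,\delta^{(1)}) + c(\delta_C,\delta_C) = d(\delta^{(1)}) + c(\delta_C,\delta_C),
\]
where $d$ is the differential of the hom-complex $\Com(\AAA)(C,C)$ of Example \ref{pcomexample}. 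Thus $\delta^{(1)}$ exists if and only if $c(\delta_C,\delta_C)$ is a coboundary, and the obstruction is the class $o_C = [c(\delta_C,\delta_C)] \in K(\AAA)(C,C[2])$, which is $\chi_{\AAA}(c)_C$ up to the overall sign that I track below.

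Finally I would reconcile the two computations. The signs from the Koszul conventions of \S\ref{parsign} and the shift identification (\ref{eqnbm}) have to be carried through carefully: Lemma \ref{lembr} contributes the sign making $\chi_{\AAA}(c)_C = [-c(\delta_C,\delta_C)]$, while the deformed-differential computation is governed by the hom-differential of Example \ref{pcomexample}, whose deliberate curvature-sign convention $c_C = -\delta_C^2$ (flagged in the Remark after the cdg identities) is precisely what is needed for the two classes to agree on the nose. Checking that these conventions cancel correctly is routine once they are fixed. The genuinely delicate point, and the one I expect to be the main obstacle, is matching this strict, cochain-level computation with the \emph{homotopy}-theoretic obstruction of \cite{lowen2}: a homotopy $c$-deformation is a lift of $C$ along $k\otimes_{k[\epsilon]}-$ in $K(\AAA)$ rather than on the nose, so one must verify that $[c(\delta_C,\delta_C)]$ is independent of the chosen representative of $C$ and coincides with the obstruction produced by \cite[Theorem 5.2]{lowen2}. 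On the $\chi$-side this is automatic, since $\chi_{\AAA}$ factors through cohomology and lands in the graded centre, so $\chi_{\AAA}(c)_C$ is homotopy-invariant and natural; the remaining work is to confirm that the obstruction of \cite{lowen2}, which a priori might involve higher homotopies, reduces at first order to exactly the cocycle $c(\delta_C,\delta_C)$.
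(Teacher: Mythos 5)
Your proposal is correct and follows essentially the same route as the paper: the paper's proof likewise computes $(\bar{\chi}_{\AAA}(c))_C = -c(\delta_C,\delta_C)$ from Theorem \ref{anotherembr} (via Lemma \ref{lembr}, exactly your $m=2$ term) and then identifies $[c(\delta_C,\delta_C)]$ with the obstruction. The one point you flag as the remaining delicate step --- matching the strict, cochain-level computation of the deformed differential with the homotopy-theoretic obstruction of the cited work --- is precisely what the paper disposes of by invoking \cite[Theorems 3.8, 4.1]{lowen2}, so your expanded derivation of the equation $d(\delta^{(1)}) + c(\delta_C,\delta_C)=0$ is a correct unwinding of that citation rather than a different argument.
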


\begin{proof}
Let $\bar{\chi}_{\AAA}$ be the characteristic dg morphism $\CC^2(\AAA) \lra \prod_{C \in \Com(\AAA)}\Com(\AAA)^2(C,C)$ enhancing $\chi_{\AAA}$. Consider $C = (C, \delta_C) \in \Com(\AAA)$ and $\phi \in \CC^2(\AAA)$. According to Theorem \ref{anotherembr}, we have $$(\bar{\chi}_{\AAA}(\phi))_C = - \phi(\delta_C, \delta_C)$$ 
According to \cite[Theorems 3.8, 4.1]{lowen2}, $[\phi(\delta_C, \delta_C)]$ is the obstruction to $c$-deforming $C$.
\end{proof}

\begin{corollary}
Let $\aaa$ be an abelian category with enough injectives. The characteristic morphism
$$\chi_{\aaa}: HH^{2}_{\mathrm{ab}}(\aaa) \lra \ZZZ^{2}D^+(\aaa)$$
satisfies $$\chi_{\aaa}(c) = (o_C)_{C \in D^+(\aaa)}$$
where $o_C \in \Ext^2_{\aaa}(C,C)$ is the obstruction to deforming $C$ into an object of $D^+(\aaa_c)$.
For $C \in D^b(\aaa)$, this is equally the obstruction to deforming $C$ into an object of $D^b(\aaa_c)$.
\end{corollary}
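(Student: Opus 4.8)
The plan is to obtain this corollary as a direct specialization of Theorem \ref{maintheorem} to the linear category $\AAA = \III = \Inj(\aaa)$, translated through the deformation-theoretic dictionary of \cite{lowenvandenbergh1} and \cite{lowen2}. First I would record the relevant identifications. By the convention of \S\ref{parabcat} we have $HH^{\ast}(\III) = HH^{\ast}_{\mathrm{ab}}(\aaa)$, and by construction in \S\ref{parchar} the morphism $\chi_{\aaa}$ is precisely $\chi_{\III}$ restricted to the dg subcategory $\Com^{+}(\III) = D^{+}_{\mathrm{dg}}(\aaa)$. Under the standard equivalence $K^{+}(\III) \simeq D^{+}(\aaa)$ (the homotopy category of bounded below complexes of injectives computes $D^{+}(\aaa)$), the graded hom $K(\III)(C,C[2])$ is identified with $\Ext^{2}_{\aaa}(C,C)$. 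Thus the target $\ZZZ^{2}K(\III)$ of Theorem \ref{maintheorem}, restricted to $C \in K^{+}(\III)$, becomes $\ZZZ^{2}D^{+}(\aaa)$ indexed by the groups $\Ext^{2}_{\aaa}(C,C)$.

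The heart of the argument is matching the two deformation problems. By \cite{lowenvandenbergh1} the abelian deformation $\aaa_{c}$ of $\aaa$ corresponds to the linear deformation $\III_{c}[\epsilon]$ of $\III = \Inj(\aaa)$, and moreover $\Inj(\aaa_{c}) \simeq \III_{c}[\epsilon]$, so that $K^{+}(\III_{c}[\epsilon]) \simeq D^{+}(\aaa_{c})$. Under these equivalences the reduction functor $k \otimes_{k[\epsilon]} - \colon K(\III_{c}[\epsilon]) \lra K(\III)$ of \S\ref{parcharobs} corresponds to the functor $D^{+}(\aaa_{c}) \lra D^{+}(\aaa)$ along which we deform. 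Consequently, for $C \in D^{+}(\aaa) \simeq K^{+}(\III)$, a $c$-deformation of $C$ in the sense of \cite{lowen2} is exactly a lift of $C$ to $D^{+}(\aaa_{c})$, and the two obstruction classes coincide. Theorem \ref{maintheorem} then gives $\chi_{\aaa}(c)_{C} = o_{C} \in \Ext^{2}_{\aaa}(C,C)$, which is the $D^{+}$-statement.

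Finally, for the bounded case, I would show the obstruction is unchanged when $D^{+}$ is replaced by $D^{b}$. The obstruction class for $C \in D^{b}(\aaa)$ lies in $\Ext^{2}_{\aaa}(C,C)$, computed identically in both settings, so it suffices to see that a lift, when it exists, can be chosen in $D^{b}(\aaa_{c})$. This follows from the long exact cohomology sequence attached to the square-zero extension $k[\epsilon] \lra k$: a flat $c$-deformation $\tilde{C}$ sits in a triangle relating it to two copies of $C$, forcing its cohomology in $\aaa_{c}$ to be concentrated in the same finite range of degrees, whence $\tilde{C} \in D^{b}(\aaa_{c})$. Therefore the obstruction to deforming $C$ into $D^{b}(\aaa_{c})$ coincides with the one into $D^{+}(\aaa_{c})$, namely $\chi_{\aaa}(c)_{C}$. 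I expect the main obstacle to be the second paragraph: one must verify that the purely algebraic $c$-deformation theory of complexes of injectives from \cite{lowen2} is genuinely identified, via $K^{+}(\III) \simeq D^{+}(\aaa)$ and $\Inj(\aaa_{c}) \simeq \III_{c}[\epsilon]$, with the deformation of objects of the derived category $D^{+}(\aaa)$ into $D^{+}(\aaa_{c})$ — this compatibility is exactly what the cited comparison results of \cite{lowenvandenbergh1} and \cite{lowen2} are designed to provide.
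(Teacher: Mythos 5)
Your proposal follows essentially the same route as the paper: the corollary is deduced by specializing Theorem \ref{maintheorem} to $\AAA = \Inj(\aaa)$ via the equivalence $D^+(\aaa) \cong K^+(\Inj(\aaa))$ and the identification of abelian deformations of $\aaa$ with linear deformations of $\Inj(\aaa)$, with the bounded statement handled by the comparison in \cite[\S 6.3]{lowen2} (for which your long-exact-sequence sketch is the expected content). The extra detail you supply in matching the two deformation problems and in the $D^b$ reduction is consistent with what the cited results provide.
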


\begin{proof}
This easily follows from Theorem \ref{maintheorem} since $D^+(\aaa) \cong K^+(\Inj(\aaa))$. The  last statement follows from \cite[\S 6.3]{lowen2}.
\end{proof}

\subsection{$A_{[0,\infty[}$-deformations}
In this section, we discuss the sense in which the Hochschild cohomology of an $A_{[0,\infty[}$-category $\AAA$ describes its first order $A_{[0,\infty[}$-deformations. The easiest (though not necessarily the best, see Remark \ref{remcdgdefo} and \S \ref{parderdef}) deformations to handle are those with fixed set of objects. Note that the ``flatness'' automatically imposed in this definition is graded freeness, which does \emph{not} imply cofibrancy in the dg-case!

\begin{definition}\label{cdgdefo}
Let $\AAA$ be a $k$-linear $A_{[0,\infty[}$-category where, in the entire definition, $A_{[0,\infty[}$- can be replaced by $A_{\infty}$-, cdg, dg or blanc.
\begin{enumerate}
\item A  \emph{first order $A_{[0,\infty[}$-deformation of $\AAA$} is a structure of $k[\epsilon]$-linear $A_{[0,\infty[}$-category on a $k[\epsilon]$-quiver $\BBB \cong k[\epsilon] \otimes_k \AAA$, such that its reduction to $\AAA$ coincides with the $A_{[0,\infty[}$-structure of $\AAA$ (in other words, the canonical $k \otimes_{k[\epsilon]} \BBB \cong \AAA$ is an $A_{[0,\infty[}$-isomorphism).
\item A \emph{partial first order $A_{[0,\infty[}$-deformation of $\AAA$} is an $A_{[0,\infty[}$-deformation of $\AAA'$ for some full $A_{[0,\infty[}$-subcategory $\AAA' \subset \AAA$. 
\item  Let $\BBB$ and $\BBB'$ be (partial) deformations of $\AAA$. An \emph{isomorphism of (partial) deformations} is an isomorphism $g: \BBB \lra \BBB'$ of $A_{[0,\infty[}$-categories, of which the reduction to $\AAA$ (resp. $\AAA'$ in case of partial deformations) is the identity morphism. A \emph{morphism of partial deformations} is an isomorphism of deformations between $\BBB$ and a full $A_{[0,\infty[}$-subcategory of $\BBB'$.
\item A partial deformation $\BBB$ of $\AAA$ is called \emph{maximal} if every morphism $\BBB \lra \BBB'$ of partial deformations is an isomorphism.
\item The groupoid $A_{[0,\infty[}-\Def_{\AAA}(k[\epsilon])$ has as objects first order $A_{[0,\infty[}$-deformations of $\AAA$. Its morphisms are isomorphisms of deformations.
\item The category $A_{[0,\infty[}-\mathrm{pDef}_{\AAA}(k[\epsilon])$ has as objects first order partial $A_{[0,\infty[}$-deformations of $\AAA$. Its morphisms are morphisms of partial deformations.
\item The groupoid $A_{[0,\infty[}-\mathrm{mpDef}_{\AAA}(k[\epsilon])$ has as objects maximal partial $A_{[0,\infty[}$-deformations of $\AAA$. Its morphisms are isomorphisms of partial deformations.
\item The groupoid $MC_{\AAA}(k[\epsilon])$ has as objects $Z\CC^2(\AAA)$. For $c, c' \in Z\CC^2(\AAA)$, a morphism $c \lra c'$ is an element $h \in \CC^1(\AAA)$ with $d(h) = c' -c$.
\end{enumerate}
\end{definition}

\begin{proposition}\label{mcprop1}
Let $\AAA$ be a $k$-linear $A_{[0,\infty[}$-category.
\begin{enumerate}
\item
 There is an equivalence of categories 
$$MC_{\AAA}(k[\epsilon]) \lra A_{[0,\infty[}{-}\Def_{\AAA}(k[\epsilon])$$
\item Consequently, there is a bijection
$$HH^2(\AAA) \lra \Sk(A_{[0,\infty[}{-}\Def_{\AAA}(k[\epsilon]))$$
\end{enumerate}
\end{proposition}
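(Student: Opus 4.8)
The plan is to construct an explicit functor $F \colon MC_{\AAA}(k[\epsilon]) \lra A_{[0,\infty[}{-}\Def_{\AAA}(k[\epsilon])$ and to show it is an isomorphism of categories (in particular an equivalence), after which the second statement will follow formally by passing to isomorphism classes. Throughout I would work with the underlying quiver $\BBB = k[\epsilon] \otimes_k \AAA$ and exploit $\epsilon^2 = 0$ to linearise every computation; since only the $\epsilon$-linear part ever intervenes, no hypothesis on $\mathrm{char}(k)$ is needed (the exponential/gauge picture alluded to in the remark after Proposition \ref{bracesect} collapses to its first-order term here). It is cleanest to phrase both the structure equation and the morphism equation in the coderivation picture of Remark \ref{remainfty}(ii), in which an $A_{[0,\infty[}$-structure is a codifferential $b$ on $B\AAA$ with $b\circ b = 0$ and a fixed-object morphism is a map of dg cocategories intertwining codifferentials.

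On objects I would argue as follows. Writing the structure $\mu$ of $\AAA$ as a codifferential $b$, any $k[\epsilon]$-linear $A_{[0,\infty[}$-structure on $\BBB$ reducing to $\mu$ has the form $b_c = b + \gamma_c\epsilon$ for a unique $\gamma_c$ corresponding to some $c \in \CC^2(\AAA)$. Expanding $b_c \circ b_c$ modulo $\epsilon^2$ and using $b\circ b = 0$ leaves $(b\gamma_c + \gamma_c b)\epsilon = [b,\gamma_c]\epsilon$; under the identification of the commutator of coderivations with the Lie bracket $\langle-,-\rangle$ this is $\langle \mu, c\rangle\epsilon = d(c)\epsilon$. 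Hence $b_c$ is an $A_{[0,\infty[}$-deformation precisely when $d(c) = 0$, i.e. $c \in Z\CC^2(\AAA)$, giving a bijection on objects $c \mapsto (\BBB, b_c)$.

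The main work is the correspondence on morphisms. Since the object sets agree and the reduction is the identity, every isomorphism of deformations is a fixed-object $A_{[0,\infty[}$-morphism reducing to the identity, hence of the form $g = \mathrm{id} + h\epsilon$ with $h \in \CC^1(\AAA)$ (its arity-$1$ and arity-$0$ components being the two summands of $\CC^1 = \CC^{0,1}\oplus\CC^{1,0}$); such a $g$ is automatically invertible because $\epsilon$ is nilpotent, so invertibility need not be checked separately. I would observe that a coalgebra endomorphism of the form $\mathrm{Id} + \tilde H\epsilon$ is a morphism of cocategories iff $\tilde H$ is the coderivation with Taylor coefficients $h_n$. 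Imposing that $g$ intertwine the codifferentials, $G\,b_c = b_{c'}\,G$, and reading off the $\epsilon$-coefficient modulo $\epsilon^2$ gives $\gamma_{c'} - \gamma_c = [\tilde H, b]$, which under the same identification equals $\pm\, d(h)$. Thus $g$ defines a morphism from $b_c$ to $b_{c'}$ exactly when $c' - c = \pm d(h)$, matching the morphisms $h$ with $d(h) = c'-c$ of $MC_{\AAA}(k[\epsilon])$ after the evident sign normalisation $h \mapsto \pm h$. Composition on both sides is just addition of the $h$'s to first order (the product of two such $g$'s has $\epsilon$-part $h + h'$, while composites in $MC_{\AAA}$ add because $d$ is additive), so $F$ is a functor that is bijective on objects and on each hom-set; hence $F$ is an isomorphism of categories, which proves the equivalence.

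Finally, for the bijection I would pass to skeleta: taking isomorphism classes in $MC_{\AAA}(k[\epsilon])$ identifies $c, c'$ exactly when $c' - c \in d(\CC^1(\AAA))$, so $\Sk(MC_{\AAA}(k[\epsilon])) = Z\CC^2(\AAA)/d(\CC^1(\AAA)) = HH^2(\AAA)$, and transporting along $F$ yields the asserted bijection. I expect the only genuinely delicate point to be the bookkeeping of Koszul signs in the morphism equation — pinning down the sign in $[\tilde H, b] = \pm d(h)$ and verifying its compatibility with composition — everything else being a direct first-order expansion in $\epsilon$.
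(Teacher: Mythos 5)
Your proposal is correct and follows essentially the same route as the paper: the paper's proof sends $\phi\in Z^2\CC(\AAA)$ to $(\AAA[\epsilon],\mu+\phi\epsilon)$, checks $(\mu+\phi\epsilon)\{\mu+\phi\epsilon\}=\mu\{\mu\}+\langle\mu,\phi\rangle\epsilon=d(\phi)\epsilon$, and sends $h$ to $1+h\epsilon$ using $d(h)=\phi'-\phi$, which is exactly your first-order expansion in the equivalent coderivation picture. Your additional verifications (that every deformation and every isomorphism of deformations has this form, and that composition is respected) are points the paper leaves implicit, and are welcome.
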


\begin{proof}
Let $\mu$ be the $A_{[0,\infty[}$-structure on $\AAA$ and consider $\phi \in Z^2\CC(\AAA)$. The image of $\phi$ is the $A_{[0,\infty[}$-category $A_{\phi}[\epsilon] = (\AAA[\epsilon],  \mu + \phi\epsilon)$.
To see that $\mu + \phi\epsilon$ is an $A_{[0,\infty[}$-structure, it suffices to compute
$\mu + \phi\epsilon\{\mu + \phi\epsilon\} = \mu\{\mu\} + [\mu\{\phi\} + \phi\{\mu\}]\epsilon$ which is zero since $\mu$ is an $A_{[0,\infty[}$-structure and  $\phi$ is a Hochschild cocycle.
Next consider a morphism of cocycles $h: \phi \lra \phi'$. The image of $h$ is the morphism
$1 + h\epsilon: B\AAA_{\phi}[\epsilon] \lra B\AAA_{\phi'}[\epsilon]$. The identity $d(h) = \phi' - \phi$ easily implies the compatibility of $1 + h\epsilon$ with the respective $A_{[0,\infty[}$-structures.
\end{proof}

\begin{definition}
Consider a $k$-linear $A_{\infty}$-category $\AAA$ and $\phi \in HH^2(\AAA)$. The $\phi-\infty$-part of $\AAA$ is the full subcategory $\AAA_{\phi-\infty} \subset \AAA$ with
$$\Ob(\AAA_{\phi-\infty}) = \{ A \in \AAA \,\, |\, \, 0 = H^2(\pi_0)(\phi) \in H^2(\AAA(A,A))\}$$
where $\pi_0$ is as in \S \ref{parproj}.
\end{definition}

\begin{example}
Consider a linear category $\AAA$ and $\phi \in HH^2(\AAA)$. Put $\phi' = [\embr_{\delta}](\phi) \in HH^2(\Com(\AAA))$. We have
$$\Ob(\Com(\AAA)_{\phi'-\infty}) = \{ C \in \Com(\AAA) \, \, | \, \, 0 = \chi_{\AAA}(\phi)_C \in K(\AAA)(C,C[2]) \}$$
\end{example}

\begin{proposition}\label{mcprop2}
Let $\AAA$ be a $k$-linear $A_{\infty}$-category.
\begin{enumerate}
\item There is a morphism
$$A_{[0,\infty[}-\Def_{\AAA}(k[\epsilon]) \lra A_{\infty}-\mathrm{pDef}_{\AAA}(k[\epsilon]): \BBB \longmapsto \BBB_{\infty}$$
where $\BBB_{\infty}$ is as in Definition \ref{inftypart}.
\item There is a morphism
$$HH^2(\AAA) \lra \Sk(A_{\infty}-\mathrm{pDef}_{\AAA}(k[\epsilon]))$$
mapping $\phi \in HH^2(\AAA)$ to an $A_{\infty}$-deformation of $\AAA_{\phi-\infty} \subset \AAA$.
\end{enumerate}
\end{proposition}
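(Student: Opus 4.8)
The plan is to translate everything into the Maurer--Cartan description of deformations supplied by Proposition \ref{mcprop1}, and to read off the curvature as the arity-zero part of the defining cochain. Write the $A_\infty$-structure of $\AAA$ as $\mu$, so that $\mu_0 = 0$. By Proposition \ref{mcprop1} every first order $A_{[0,\infty[}$-deformation $\BBB$ is isomorphic to one of the form $(\AAA[\epsilon], \mu + \phi\epsilon)$ with $\phi \in Z^2\CC(\AAA)$, and the curvature of $\BBB$ at an object $A$ is exactly $(\phi_0)_A\epsilon = (\pi_0\phi)_A\epsilon$, where $\pi_0$ is the projection on the zero part of \S\ref{parproj}.

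For the object-level content of (1), I would argue as follows. By Definition \ref{inftypart} the $\infty$-part $\BBB_\infty$ is the full subquiver on the objects $A$ with $(\phi_0)_A = 0$; on these objects the total curvature $\mu_0 + \phi_0\epsilon$ vanishes, so $\BBB_\infty$ is genuinely an $A_\infty$-category. Since the hom-objects of $\BBB$ are $\AAA(A,A')[\epsilon]$ and the structure $\mu + \phi\epsilon$ reduces to $\mu$ modulo $\epsilon$, the category $\BBB_\infty$ is an $A_\infty$-deformation of the full subcategory $\AAA' \subset \AAA$ spanned by those same objects; that is, $\BBB_\infty$ is a partial $A_\infty$-deformation of $\AAA$. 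This produces the assignment on objects.

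For (2) the extra ingredient is the passage from the \emph{strict} vanishing $(\phi_0)_A = 0$ to the \emph{cohomological} vanishing $[(\phi_0)_A] = 0 \in H^2(\AAA(A,A))$ defining $\AAA_{\phi-\infty}$. Here I would invoke Proposition \ref{projzero}: because $\AAA$ is $A_\infty$, $\pi_0$ is a morphism of complexes, so each class $[(\phi_0)_A]$ depends only on the image of $\phi$ in $HH^2(\AAA)$, and hence so does $\AAA_{\phi-\infty}$. Over $\AAA_{\phi-\infty}$ the cochain $\phi_0$ is a $b_1^{\Delta}$-coboundary, so I may choose $h \in \CC^1(\AAA)$ whose arity-zero part $h_0$ satisfies $b_1^{\Delta}(h_0) = -\phi_0$ on $\AAA_{\phi-\infty}$, and apply the gauge isomorphism $1 + h\epsilon$ of Proposition \ref{mcprop1} to pass to the cohomologous cocycle $\phi' = \phi + d(h)$ with $\phi'_0 = 0$ on $\AAA_{\phi-\infty}$. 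The restriction of $(\AAA[\epsilon], \mu + \phi'\epsilon)$ to $\AAA_{\phi-\infty}$ is then curvature-free, hence an honest $A_\infty$-deformation of $\AAA_{\phi-\infty}$, giving the required element of $\Sk(A_\infty\text{-}\mathrm{pDef}_\AAA(k[\epsilon]))$.

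Well-definedness is where I expect the real work, and it is also the heart of the functoriality needed for (1). First, two choices of $h_0$ differ by a $b_1^{\Delta}$-cocycle, and the corresponding gauges differ by an isomorphism of $A_\infty$-deformations of $\AAA_{\phi-\infty}$, so the iso class is unchanged; second, replacing $\phi$ by $\phi + d(k)$ leaves $\AAA_{\phi-\infty}$ unchanged (again by Proposition \ref{projzero}) and, composing the gauges, yields an isomorphic deformation. The genuine obstacle is that an isomorphism $g = 1 + h\epsilon: \BBB \to \BBB'$ of deformations transforms the curvature by $\phi'_0 = \phi_0 + b_1^{\Delta}(h_0)$, a nonzero coboundary in general; hence $g$ does \emph{not} preserve the strict $\infty$-locus, and the naive restriction of $g$ identifies the two $\infty$-parts only after passing to the common full subcategory on which both curvatures vanish. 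Turning this into an honest morphism in $A_\infty\text{-}\mathrm{pDef}_\AAA(k[\epsilon])$ --- that is, reconciling the strict description underlying (1) with the cohomological description underlying (2) --- is the step I would expect to demand the most care.
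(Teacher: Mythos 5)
Your argument for (2) is essentially the paper's own proof: restrict a cocycle representative to $\AAA_{\phi-\infty}$, observe that its arity-zero part is a coboundary there, and gauge it away by an arity-zero $h \in \CC^1(\AAA)$ to obtain a curvature-free, hence genuinely $A_{\infty}$, partial deformation of $\AAA_{\phi-\infty}$. The paper's printed proof says nothing about (1) or about the well-definedness and functoriality issues you raise at the end (the observation that a gauge $1+h\epsilon$ shifts the strict zero-locus of the curvature by the coboundary $b_1^{\Delta}(h_0)$, so that the $\infty$-part is not preserved on the nose, is a real subtlety the paper leaves untreated), so that part of your discussion goes beyond, rather than against, the argument given there.
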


\begin{proof}
For (ii), let $\bar{\phi} \in Z^2\CC(\AAA)$ be a Hochschild cocycle with $[\bar{\phi}] =\phi$ and let $\bar{\phi}'$ be its restriction to $Z^2\CC(\AAA_{\phi-\infty})$. Then $(\bar{\phi}')_0 \in \prod_{A \in \AAA_{\phi-\infty}}\AAA^2(A,A)$ is a coboundary, hence there exists $h \in \prod_{A \in \AAA_{\phi-\infty}}\AAA^1(A,A)$ with $d_{\AAA}(h) = (\bar{\phi}')_0$. If we consider $h$ as an element of $\CC^1(\AAA)$, then $\bar{\phi}'' = \bar{\phi}' - d(h)$ is a representative of $\phi$ with $(\bar{\phi}' + d(h))_0 = 0$. Consequently, $(A_{\phi-\infty})_{\bar{\phi}''}[\epsilon]$ is a partial $A_{\infty}$-deformation of $\AAA$ corresponding to $\phi$.
\end{proof}

\subsection{Deformations of categories of (pre)complexes}
Let $\AAA$ be a $k$-linear category.
In this section we use Theorem \ref{anotherembr} to associate to a linear deformation of $\AAA$, a cdg deformation of the cdg category $\PCom(\AAA)$ of precomplexes of $\AAA$-objects, and a partial dg deformation of the dg category $\Com(\AAA)$ of complexes of $\AAA$-objects.

Combining Theorem \ref{anotherembr} and Proposition \ref{mcprop1} we obtain a functor $$MC_{\AAA}(k[\epsilon]) \lra MC_{\PCom(\AAA)}(k[\epsilon]) \lra A_{[0,\infty[}-\Def_{\PCom(\AAA)}(k[\epsilon])$$ factoring through a ``realization'' functor
$$R: MC_{\AAA}(k[\epsilon]) \lra \text{cdg}-\Def_{\PCom(\AAA)}(k[\epsilon])$$
whose restriction to cdg$-\Def_{\Com^+(\AAA)}(k[\epsilon])$ is an equivalence.
Similarly, using Proposition \ref{mcprop2}(2), there is a map
$$\rho': HH^2(\AAA) \lra HH^2(\Com^+(\AAA)) \lra \Sk(\text{dg}-\mathrm{pDef}_{\Com^+(\AAA)}(k[\epsilon]))$$

\begin{theorem}\label{precomplexes}
Consider $\phi \in Z^2\CC(\AAA)$ and the corresponding linear deformation $\AAA_{\phi}[\epsilon]$. 
\begin{enumerate}
\item The cdg deformation $R(\phi)$ of $\PCom(\AAA)$ is (isomorphic to) the subcategory $\PCom_{triv}(\AAA_{\phi}[\epsilon]) \subset \PCom(\AAA_{\phi}[\epsilon])$ consisting of the ``trivial'' precomplexes $\bar{C} = k[\epsilon] \otimes_k C$ for $C \in \PCom(\AAA)$ .
\item For every collection of precomplexes $\Gamma = \{\bar{C}\}_{C \in \PCom(\AAA)}$ where $k \otimes_{k[\epsilon]} \bar{C} = C$, the subcategory $\PCom_{\Gamma}(\AAA_{\phi}[\epsilon]) \subset \PCom(\AAA_{\phi}[\epsilon])$ spanned by $\Gamma$ is a cdg deformation of $\PCom(\AAA)$ which is isomorphic to $R(\phi)$.
\item For every collection of complexes $\Gamma = \{\bar{C}\}_{C \in \Com^+(\AAA)_{\phi-\infty}}$ where $k \otimes_{k[\epsilon]} \bar{C} = C$, the subcategory $\Com^+_{\Gamma}(\AAA_{\phi}[\epsilon]) \subset \Com^+(\AAA_{\phi}[\epsilon])$ spanned by $\Gamma$ is a maximal partial dg deformation of $\Com^+(\AAA)$ representing $\rho'([\phi])$.
\end{enumerate}
Consequently, $\rho'$ factors over an injection 
$$\rho: HH^2(\AAA) \lra \Sk(\mathrm{dg}-\mathrm{mpDef}_{\Com^+(\AAA)}(k[\epsilon]))$$
The image consists of those maximal partial  dg deformations that are dg deformations of some $\AAA'$ with $\AAA \subset \AAA' \subset \Com^+(\AAA)$.
\end{theorem}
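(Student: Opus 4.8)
The plan is to establish the three parts in order and then read off the assertion about $\rho$. Throughout, write $m$ for the composition of $\AAA$ and recall from \S\ref{exprecompl} that the cdg structure on $\PCom(\AAA)$ is $\embr_{\delta}(m) = m\{\delta,\delta\} + m\{\delta\} + m$, with canonical cochain $\delta = (\delta_C)_C$. For (i) I would compute the cdg structure on the full subcategory $\PCom_{triv}(\AAA_{\phi}[\epsilon])$ directly from Example \ref{pcomexample}, now applied to the deformed linear category $\AAA_{\phi}[\epsilon] = (\AAA[\epsilon], m + \phi\epsilon)$. Since a trivial lift $\bar{C} = k[\epsilon]\otimes_k C$ keeps the differential $\delta_C$, its composition is $m+\phi\epsilon$, its differential is $(m+\phi\epsilon)\{\delta\}$, and its curvature is $-(m+\phi\epsilon)(\delta_C,\delta_C)$. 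On the other side, $\embr_{\delta}$ is brace linear, so $\embr_{\delta}(m+\phi\epsilon) = \embr_{\delta}(m) + \embr_{\delta}(\phi)\epsilon$, and the arity $2$, $1$, $0$ parts of $\embr_{\delta}(m+\phi\epsilon)$ are respectively $m+\phi\epsilon$, $(m+\phi\epsilon)\{\delta\}$ and $(m+\phi\epsilon)\{\delta,\delta\} = -(m+\phi\epsilon)(\delta,\delta)$ by Lemma \ref{lembr}. These match term by term, identifying $R(\phi)$ with $\PCom_{triv}(\AAA_{\phi}[\epsilon])$; this is pure bookkeeping once the signs of Lemma \ref{lembr} are in hand.

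For (ii), a general lift has $\delta_{\bar{C}} = \delta_C + \eta_C\epsilon$ with $\eta_C \in \PCom(\AAA)(C,C)^1$. I would compute, exactly as above, the Maurer--Cartan element $c' \in Z^2\CC(\PCom(\AAA))$ attached to $\PCom_{\Gamma}(\AAA_{\phi}[\epsilon])$ and compare it with the element $c = \embr_{\delta}(\phi)$ attached to $\PCom_{triv}(\AAA_{\phi}[\epsilon])$. The extra terms coming from the $\eta_C$ are $m(\eta,-)$ at arity $1$ and $m(\eta,\delta)+m(\delta,\eta)$ at arity $0$; these are precisely the arity $1$ and arity $0$ parts of the Hochschild differential applied to the arity-$0$ cochain $\eta = (\eta_C)_C \in \CC^1(\PCom(\AAA))$, so that $c' - c = d(\eta)$ (after replacing $\eta$ by $-\eta$ should the curvature sign convention of \S\ref{exprecompl} require it). By Proposition \ref{mcprop1} the morphism $\eta\colon c \to c'$ in $MC_{\PCom(\AAA)}(k[\epsilon])$ yields an isomorphism of cdg deformations $\PCom_{triv}(\AAA_{\phi}[\epsilon]) \cong \PCom_{\Gamma}(\AAA_{\phi}[\epsilon])$, both therefore isomorphic to $R(\phi)$.

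For (iii) I would first observe that a lift $\bar{C}$ of a complex $C \in \Com^+(\AAA)$ is again a genuine complex exactly when $\delta_{\bar{C}}^2 = 0$, i.e. (using $\delta_C^2 = 0$ and the computation of (ii)) when $d(\eta_C) = -\phi(\delta_C,\delta_C)$ in $\Com(\AAA)(C,C)$. By Theorem \ref{maintheorem} the class $[\phi(\delta_C,\delta_C)]$ equals $\chi_{\AAA}(\phi)_C$, so such an $\eta_C$ exists if and only if $C \in \Com^+(\AAA)_{\phi-\infty}$. Choosing one complex-lift for each such $C$ gives $\Com^+_{\Gamma}(\AAA_{\phi}[\epsilon])$, which has vanishing curvature and is thus a dg deformation of $\Com^+(\AAA)_{\phi-\infty}$, hence a partial dg deformation of $\Com^+(\AAA)$; since the $\eta_C$ are exactly the coboundary corrections in the proof of Proposition \ref{mcprop2}(ii), it represents $\rho'([\phi])$. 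Maximality is the key conceptual point: any object of a dg partial deformation of $\Com^+(\AAA)$ must admit a complex-lift, forcing its curvature to vanish and placing it in $\Com^+(\AAA)_{\phi-\infty}$; as we have taken all such objects, $\Com^+_{\Gamma}(\AAA_{\phi}[\epsilon])$ cannot be properly enlarged, so every morphism out of it in $\mathrm{pDef}$ is an isomorphism.

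Finally, $\rho$ sends $[\phi]$ to the isomorphism class of $\Com^+_{\Gamma}(\AAA_{\phi}[\epsilon])$, and it is well defined since (the complex version of) (ii) removes the dependence on $\Gamma$ while isomorphic linear deformations yield isomorphic complex deformations by functoriality of $\embr_{\delta}$. For injectivity I would restrict along $\AAA \subset \Com^+(\AAA)$: each $C \in \AAA$, viewed as a complex concentrated in degree $0$, has $\delta_C = 0$, so $\AAA \subset \Com^+(\AAA)_{\phi-\infty}$ always, and the sub-deformation of $\Com^+_{\Gamma}(\AAA_{\phi}[\epsilon])$ on the objects of $\AAA$ is precisely the linear deformation $\AAA_{\phi}[\epsilon]$. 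An isomorphism of maximal partial dg deformations reduces to the identity, hence restricts to an isomorphism $\AAA_{\phi}[\epsilon] \cong \AAA_{\phi'}[\epsilon]$, and the bijection $HH^2(\AAA) \cong \Sk(\Def_{\AAA}(k[\epsilon]))$ then forces $[\phi] = [\phi']$. The same restriction identifies the image: a maximal partial dg deformation lies in it iff its $\phi$-$\infty$ locus contains $\AAA$, i.e. iff it is a dg deformation of some $\AAA'$ with $\AAA \subset \AAA' \subset \Com^+(\AAA)$. The main obstacle I anticipate is the sign-accurate verification that $c' - c = d(\eta)$ in (ii), together with the maximality argument of (iii); the rest is organization around Propositions \ref{mcprop1} and \ref{mcprop2} and Theorem \ref{maintheorem}.
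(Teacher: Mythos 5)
Your proposal is correct and follows essentially the same route as the paper: identify the cdg structure on the subcategory of $\PCom(\AAA_{\phi}[\epsilon])$ spanned by a choice of lifts with $\embr_{\delta}(m) + (\embr_{\delta}(\phi) + d(\eta))\epsilon$, where $\eta$ records the perturbation of the predifferentials, invoke Proposition \ref{mcprop1} to absorb the coboundary $d(\eta)$, characterize the dg locus by the vanishing of $\chi_{\AAA}(\phi)_C$, and restrict to $\AAA \subset \Com^+(\AAA)_{\phi-\infty}$ for the statement about $\rho$. The only differences are cosmetic (you package part (i) via linearity of $\embr_{\delta}$ applied to $m+\phi\epsilon$ rather than expanding $\tilde{\mu}$ directly, and you spell out the injectivity of $\rho$ which the paper leaves implicit); the sign bookkeeping you defer is exactly the computation the paper carries out for $\tilde{\mu}-\bar{\mu}=d(\delta')\epsilon$.
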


\begin{remark}
According to Theorem \ref{precomplexes} iii), the part of $\Com^+(\AAA)$ that ``dg-deforms'' with respect to $\phi \in HH^2(\AAA)$ is spanned by the objects $$\{ C \in \Com^+(\AAA)\,\, | \,\, 0 = \chi_{\AAA}(\phi)_C \in K(\AAA)(C,C[2]) \}$$
\end{remark}

\begin{remark}\label{remcdgdefo}
Morally, Theorem \ref{precomplexes} ii) suggests that we may consider $\PCom(\AAA_{\phi}[\epsilon])$ as a representative of the class of  cdg deformations of $\PCom(\AAA)$ corresponding to the element $[\phi] \in HH^2(\AAA)$. To make this statement mathematically true, one needs a somewhat more relaxed notion of deformation (and isomorphism) in which the object set is not necessarily preserved. Clearly, the statement is true for any such notion of which Definition \ref{cdgdefo} with the isomorphisms relaxed to fully faithful morphisms that are surjective on objects is a special case. In the same spirit, iii) suggests that we may  consider $\Com^+(\AAA_{\phi}[\epsilon])$ as a representative of the class of maximal partial dg deformations of $\Com^+(\AAA)$ corresponding to $[\phi]$.
\end{remark}

\begin{proof}
There is a canonical morphism of $k[\epsilon]$-quivers $$F: \PCom(\AAA_{\phi}[\epsilon]) \lra k[\epsilon] \otimes_k \PCom(\AAA)$$ defined in the following manner.
A precomplex $\bar{C}$ of $\AAA_{\phi}[\epsilon]$-objects gets mapped to $C = k \otimes_{k[\epsilon]} \bar{C} \in \PCom(\AAA)$. For two precomplexes $\bar{C}$ and $\bar{D}$, $\PCom(\AAA_{\phi}[\epsilon])^n(\bar{C},\bar{D}) = \prod_{i \in \Z}\AAA_{\phi}[\epsilon](C^i, D^{i+n}) \cong k[\epsilon] \otimes \PCom(\AAA)^n(C,D)$. This defines $F$. From now on we will tacitly use $F$ to identify the left and the right hand side.

Let us denote the composition of $\AAA$ by $m$. By definition, the composition of $\AAA_{\phi}[\epsilon]$ is $m + \phi\epsilon$. Write $\delta$ for the predifferentials in $\PCom(\AAA)$ and $\bar{\delta} = \delta + \delta'\epsilon$ for the predifferentials in (a full subcategory of) $\PCom(\AAA_{\phi}[\epsilon])$. By Examples \ref{pcomexample}, \ref{exprecompl}, the cdg structure on $\PCom(\AAA)$ is given by $\mu = m\{\delta, \delta\} + m\{\delta\} + m$ and the cdg structure on $\PCom(\AAA_{\phi}[\epsilon])$ is given by $\tilde{\mu} = (m + \phi\epsilon)\{\delta +\delta'\epsilon, \delta + \delta' \epsilon\} + (m + \phi \epsilon)\{\delta + \delta'\epsilon\} + m$. This expression can be rewritten as
$$\tilde{\mu}_0 = m\{\delta, \delta\} + [m\{\delta, \delta'\} + m\{\delta', \delta\} + \phi\{\delta, \delta\}]\epsilon$$ 
$$\tilde{\mu}_1 = m\{\delta\} + [m\{\delta'\} + \phi\{\delta\}]\epsilon$$
$$\tilde{\mu}_2 = m + \phi\epsilon$$
On the other hand, we have $\embr_{\delta}(\phi) = \phi\{\delta,\delta\} + \phi\{\delta\} + \phi$ so the cdg structure on $\PCom(\AAA)_{\embr_{\delta}(\phi)}[\epsilon]$ is $\bar{\mu}$ with 
$$\bar{\mu}_0 = m\{\delta, \delta\} + \phi\{\delta, \delta\}\epsilon$$
$$\bar{\mu}_1 = m\{\delta\} + \phi\{\delta\}\epsilon$$
$$\bar{\mu}_2 = m + \phi\epsilon$$
Comparing $\tilde{\mu}$ and $\bar{\mu}$, it becomes clear that on trivial precomplexes (where $\delta' = 0$), they coincide. This already proves i).
To produce a deformation isomorphic to $\bar{\mu}$, it is, by Proposition \ref{mcprop1}, allowed to change $\embr_{\delta}(\phi)$ up to a Hochschild coboundary. For a collection $\Gamma$ as in (2), consider the corresponding $\delta' \in \CC^1(\PCom(\AAA))$. Using Definition \ref{bracedef} and the definition of the Hochschild differential $d$ (\S \ref{parhochainf}), it becomes clear that 
$$\tilde{\mu} = \bar{\mu} + d(\delta')\epsilon$$
thus proving ii).

Now consider a collection $\Gamma$ of complexes as in iii). Obviously $\Com_{\Gamma}^+(\AAA_{\phi}[\epsilon])$ defines a dg deformation of $\Com^+(\AAA)_{\phi-\infty}$ hence a partial deformation of $\Com^+(\AAA)$. By the reasoning above, dg deformations of $\AAA' \subset \Com^+(\AAA)$ isomorphic to $\bar{\mu}|_{\AAA'}$ are precisely given by $\bar{\mu}_{\eta} = \bar{\mu}|_{\AAA'} + d(\eta)\epsilon$ for some $\eta \in \CC^1(\AAA')$. The existence of an $\eta$ for which $(\bar{\mu}_{\eta})_0 = 0$ (and hence for which the deformation is dg) is equivalent to the existence of $\delta' \in \prod_{C \in \AAA'} \AAA'(C,C)^1$ with $(d(\delta'))_0 = \phi(\delta, \delta)$, in other words to the fact that
$$0 = \chi_{\AAA}(\phi)_C \in H^2\Com^+(\AAA)(C,C)$$
for every $C \in \AAA'$. Clearly, $\AAA' = \Com^+(\AAA)_{\phi-\infty}$ is maximal with this property.

Finally, the statement concerning $\rho$ easily follows from the observation that for every $[\phi] \in HH^2(\AAA)$, $\AAA \subset \Com^+(\AAA)_{\phi-\infty}$.
\end{proof}

\subsection{Deformations of derived categories}\label{parderdef}
Let $\aaa$ be an abelian category with enough injectives.
Putting $\AAA = \Inj(\aaa)$ in the previous section, we obtain a bijection 
$$\Sk(R): HH^2_{\mathrm{ab}}(\aaa) \lra \Sk(\mathrm{cdg}-\Def_{D_{\mathrm{dg}}^+(\aaa)}(k[\epsilon]))$$
and the morphism $\rho'$ translates into
$$\rho': HH^2_{\mathrm{ab}}(\aaa) \lra  \Sk(\mathrm{dg}-\pDef_{D_{\mathrm{dg}}^+(\aaa)}(k[\epsilon]))$$
The following now immediately follows from Theorem \ref{precomplexes}:

\begin{theorem}\label{abelian}
Consider $\phi \in Z^2\CC_{\mathrm{ab}}(\aaa)$ and the corresponding abelian deformation $\aaa_{\phi}$ of $\aaa$.
Consider the subcategory $D_{\mathrm{dg}}^+(\aaa)_{\phi-\infty} \subset D_{\mathrm{dg}}^+(\aaa)$
spanned by the complexes $C$ with
$$0 = \chi_{\aaa}(\phi)_C \in D^+(\aaa)(C,C[2])$$
For every collection $\Gamma = \{\bar{C}\}_{C \in D_{\mathrm{dg}}^+(\aaa)_{\phi-\infty}}$ of bounded below complexes of $\aaa_{\phi}$-injectives with $k \otimes_{k[\epsilon]} \bar{C} = C$, the subcategory $D^+_{\mathrm{dg},\Gamma}(\aaa_{\phi}) \subset D_{\mathrm{dg}}^+(\aaa_{\phi})$ spanned by $\Gamma$ is a dg deformation of $D_{\mathrm{dg}}^+(\aaa)_{\phi-\infty}$ and a maximal partial dg deformation of $D^+_{\mathrm{dg}}(\aaa)$ representing $\rho'([\phi])$.
Consequently, $\rho'$ factors over an injection 
$$\rho: HH^2_{\mathrm{ab}}(\aaa) \lra \Sk(\mathrm{dg}-\mathrm{mpDef}_{D^+_{\mathrm{dg}}(\aaa_{\phi})}(k[\epsilon]))$$
The image consists of those maximal partial  dg deformations that are dg deformations of some $\AAA$ with $\Inj(\aaa) \subset \AAA \subset D^+_{\mathrm{dg}}(\aaa)$.
\end{theorem}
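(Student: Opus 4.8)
The plan is to deduce Theorem \ref{abelian} from Theorem \ref{precomplexes} by specializing the latter to $\AAA = \III = \Inj(\aaa)$ and translating each ingredient through the dictionary relating the abelian category $\aaa$ to the linear category of its injectives.

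First I would put $\AAA = \III$. Since $\aaa$ has enough injectives, $\III$ is a $k$-linear category, and by \S\ref{parabcat} we have $D^+_{\mathrm{dg}}(\aaa) = \Com^+(\III)$ together with the identification $\CC_{\mathrm{ab}}(\aaa) \cong \CC(\III)$. Under the latter, a cocycle $\phi \in Z^2\CC_{\mathrm{ab}}(\aaa)$ is literally a cocycle $\phi \in Z^2\CC(\III)$, and, by Definition \ref{defchardg} together with the construction of $\chi_{\aaa}$ in \S\ref{parchar}, the characteristic morphism $\chi_{\aaa}$ coincides with $\chi_{\III}$ once one identifies $K^+(\III)(C,C[2]) \cong D^+(\aaa)(C,C[2])$. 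In particular the zero-locus $\{ C \mid 0 = \chi_{\III}(\phi)_C \}$ computed in $K^+(\III)$ is exactly the subcategory $D^+_{\mathrm{dg}}(\aaa)_{\phi-\infty}$ of the statement.

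Second I would bring in the equivalence of \cite{lowenvandenbergh1} between abelian deformations of $\aaa$ and linear deformations of $\Inj(\aaa)$. This is precisely what makes the abelian deformation $\aaa_{\phi}$ attached to $\phi$ correspond to the linear deformation $\III_{\phi}[\epsilon]$ of $\III$ attached to the same cocycle, and in particular yields $\Inj(\aaa_{\phi}) \cong \III_{\phi}[\epsilon]$. Applying the definition of \S\ref{parabcat} to $\aaa_{\phi}$ then gives $D^+_{\mathrm{dg}}(\aaa_{\phi}) = \Com^+(\Inj(\aaa_{\phi})) \cong \Com^+(\III_{\phi}[\epsilon])$, so that the dg model of the derived category of the deformed abelian category is exactly the category of bounded below complexes to which Theorem \ref{precomplexes} refers, and a lifting collection $\Gamma = \{\bar{C}\}$ of complexes of $\aaa_{\phi}$-injectives is the same datum as a lifting collection of complexes over $\III_{\phi}[\epsilon]$.

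With these identifications in hand, Theorem \ref{precomplexes} transcribes verbatim: part (iii) shows that each $\Com^+_{\Gamma}(\III_{\phi}[\epsilon])$ is a dg deformation of $\Com^+(\III)_{\phi-\infty}$ and a maximal partial dg deformation of $\Com^+(\III)$ representing $\rho'([\phi])$, and the concluding assertion of that theorem gives the factorization of $\rho'$ through the injection $\rho$ together with the description of its image. Rewriting $\Com^+(\III) = D^+_{\mathrm{dg}}(\aaa)$ and $\Com^+(\III_{\phi}[\epsilon]) = D^+_{\mathrm{dg}}(\aaa_{\phi})$ turns these into the statement of Theorem \ref{abelian}. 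The step requiring the most care is the second one: one must check that the correspondence of \cite{lowenvandenbergh1} genuinely carries $\aaa_{\phi}$ to (a category equivalent to) $\III_{\phi}[\epsilon]$ and that this equivalence is compatible both with the passage to $\Com^+$ and with the characteristic element, so that the obstruction computed in $D^+(\aaa)$ agrees with the one computed in $K^+(\III)$. This compatibility is exactly what underlies the isomorphism $HH^{\ast}_{\mathrm{ab}}(\aaa) \cong HH^{\ast}_{\mathrm{ex}}(\aaa)$ of Theorem \ref{anotherembr2} and the obstruction interpretation of the corollary to Theorem \ref{maintheorem}; once it is granted, everything else is bookkeeping.
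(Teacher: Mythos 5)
Your proposal is correct and follows essentially the same route as the paper: the paper itself states that Theorem \ref{abelian} ``immediately follows from Theorem \ref{precomplexes}'' upon putting $\AAA = \Inj(\aaa)$, using exactly the identifications $D^+_{\mathrm{dg}}(\aaa) = \Com^+(\Inj(\aaa))$, $\CC_{\mathrm{ab}}(\aaa) \cong \CC(\Inj(\aaa))$, and the equivalence of \cite{lowenvandenbergh1} between abelian deformations of $\aaa$ and linear deformations of $\Inj(\aaa)$. Your explicit attention to the translation $\Inj(\aaa_{\phi}) \cong \III_{\phi}[\epsilon]$ and its compatibility with the characteristic element is a useful spelling-out of what the paper leaves implicit, but it is not a different argument.
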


\begin{remark}\label{abelianremark}
Theorem \ref{abelian} suggests that we may consider $D_{\mathrm{dg}}^+(\aaa_{\phi})$ as a representative of the class of maximal partial dg deformations of $D_{\mathrm{dg}}^+(\aaa)$ corresponding to $[\phi]$ (see also Remark \ref{remcdgdefo}).
\end{remark}

\begin{remark}\label{boundedremark}
A ``bounded'' version of Theorem \ref{abelian} also holds true: simply replace every dg category $D^+_{\mathrm{dg}}$ in the theorem by its bounded version $D^b_{\mathrm{dg}} \subset D^+_{\mathrm{dg}}$ spanned by the complexes with bounded cohomology (see also \cite[\S 6.3]{lowen2}).
\end{remark}

As the maps $\Sk(R)$ and $\rho$ are not entirely satisfactory, we propose another sense in which to deform (exact) dg categories, that seems more adapted to (models of) derived categories of abelian categories.

For a commutative ring $R$, let $\dgcat(R)$ denote the (large) category of $R$-linear dg categories. In \cite{tabuada}, Tabuada defined a model structure on $\dgcat(R)$ for which the weak equivalences are the quasi-equivalences of dg categories. Let $\hodgcat(R)$ denote the homotopy category for this model structure. In \cite{toen}, To{\"e}n showed that  $\hodgcat(R)$ is a closed tensor category, with the derived tensor product $\otimes_R^L$ of dg categories, and with an internal hom between dg categories $\AAA$ and $\BBB$, which we will denote $\mathcal{RH}om_R(\AAA, \BBB)$, but which is \emph{not} a derived version of the internal hom of $\dgcat(R)$ for the above model structure (in fact it does have a derived interpretation for another model structure defined in \cite{tabuada2}). The forgetful functor $\hodgcat(k) \lra \hodgcat(k[\epsilon])$ has the left adjoint
$$k \otimes^L_{k[\epsilon]} -: \hodgcat(k[\epsilon]) \lra \hodgcat(k)$$
and the right adjoint
$$\mathcal{RH}om_{k[\epsilon]}(k,-): \hodgcat(k[\epsilon]) \lra \hodgcat(k)$$

\begin{definition}\label{exdef}
Let $\AAA$ be a $k$-linear dg category.
\begin{enumerate}
\item A first order \emph{homotopy dg deformation} of $\AAA$ is a $k[\epsilon]$-linear dg category $\BBB$ together with an isomorphism $k \otimes^L_{k[\epsilon]} \BBB \cong \AAA$ in $\hodgcat(k)$.
\item If $\AAA$ is exact, a first order \emph{exact homotopy dg deformation} of $\AAA$ is a $k[\epsilon]$-linear exact dg category $\BBB$ together with an isomorphism $\AAA \cong \mathcal{RH}om_{k[\epsilon]}(k,\BBB)$ in $\hodgcat(k)$.
\end{enumerate}
\end{definition}
Using the techniques of \cite{toen}, it is not hard to show the following

\begin{proposition}
Let $\aaa$ be an abelian $k$-linear category and suppose $\bbb$ is a flat abelian deformation of $\aaa$. Then $D_{\mathrm{dg}}(\bbb)$ is an exact homotopy dg deformation of $D_{\mathrm{dg}}(\aaa)$.
\end{proposition}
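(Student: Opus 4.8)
The plan is to verify the two requirements of Definition \ref{exdef}(2): that $D_{\mathrm{dg}}(\bbb)$ is a $k[\epsilon]$-linear exact dg category, and that there is an isomorphism $D_{\mathrm{dg}}(\aaa) \cong \mathcal{RH}om_{k[\epsilon]}(k, D_{\mathrm{dg}}(\bbb))$ in $\hodgcat(k)$. The first point is essentially formal. Since $\bbb$ is a $k[\epsilon]$-linear abelian category, its standard dg enhancement $D_{\mathrm{dg}}(\bbb)$ (the dg category of h-injective complexes, or of bounded below complexes of injectives) is a $k[\epsilon]$-linear pretriangulated dg category with $H^0 D_{\mathrm{dg}}(\bbb) = D(\bbb)$, hence exact in the sense used here. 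The essential extra input supplied by the hypothesis is that the deformation is \emph{flat}: this makes the hom-complexes of $D_{\mathrm{dg}}(\bbb)$ flat (indeed degreewise free) $k[\epsilon]$-modules, a fact I would use repeatedly.

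The heart of the matter is the isomorphism in $\hodgcat(k)$, and the cleanest route is corepresentability via the adjunction that \emph{defines} $\mathcal{RH}om_{k[\epsilon]}(k,-)$, namely as the right adjoint to the forgetful (restriction of scalars) functor $\hodgcat(k) \lra \hodgcat(k[\epsilon])$ induced by the augmentation $k[\epsilon] \lra k$. Thus for every $k$-linear dg category $\ccc$ one has a natural equivalence of mapping spaces between $\mathrm{Map}_{\hodgcat(k)}(\ccc, \mathcal{RH}om_{k[\epsilon]}(k, D_{\mathrm{dg}}(\bbb)))$ and $\mathrm{Map}_{\hodgcat(k[\epsilon])}(\ccc, D_{\mathrm{dg}}(\bbb))$, where on the right $\ccc$ is regarded as $k[\epsilon]$-linear with $\epsilon$ acting as zero. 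I would then show that $D_{\mathrm{dg}}(\aaa)$ corepresents the same functor: a $k[\epsilon]$-linear (homotopy) functor out of such a $\ccc$ necessarily has image on which $\epsilon$ acts nullhomotopically in a coherent way, so it factors, up to quasi-equivalence, through the full homotopy subcategory of $D(\bbb)$ on which $\epsilon$ acts as zero. Here Toën's computation of the internal hom as the dg category of right quasi-representable bimodules over $k \otimes^{L}_{k[\epsilon]} D_{\mathrm{dg}}(\bbb)$ makes these manipulations precise; note that flatness gives $k \otimes^{L}_{k[\epsilon]} D_{\mathrm{dg}}(\bbb) \simeq k \otimes_{k[\epsilon]} D_{\mathrm{dg}}(\bbb)$ with no higher corrections.

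Concretely, I would build the comparison functor from the abelian deformation relation itself. The equivalence $\aaa \cong \Hom_{k[\epsilon]}(k,\bbb)$ exhibits $\aaa$ as the full subcategory of objects of $\bbb$ killed by $\epsilon$, and the inclusion $\aaa \cong \Hom_{k[\epsilon]}(k,\bbb) \hookrightarrow \bbb$ induces on dg enhancements a $k$-linear dg functor $D_{\mathrm{dg}}(\aaa) \lra D_{\mathrm{dg}}(\bbb)$ whose target is viewed with $\epsilon$ acting as zero; by adjunction this yields the desired $k$-linear dg functor $D_{\mathrm{dg}}(\aaa) \lra \mathcal{RH}om_{k[\epsilon]}(k, D_{\mathrm{dg}}(\bbb))$. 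It then remains to check that this functor is an isomorphism in $\hodgcat(k)$, i.e. a quasi-equivalence. Essential surjectivity on $H^0$ is the statement that every right quasi-representable bimodule arises from a complex of $\aaa$-objects, i.e. that the $\epsilon$-torsion part of $D(\bbb)$ is exactly $D(\aaa)$, which is the derived upgrade of the deformation relation and for which I would appeal to the comparison between $D(\bbb)$ and $D(\aaa)$ established in \cite{lowenvandenbergh1}. Quasi-full-faithfulness amounts to a flat base-change isomorphism of the form $k \otimes_{k[\epsilon]} R\Hom_{\bbb}(M,N) \simeq R\Hom_{\aaa}(\bar M, \bar N)$ at the level of $\Ext^{*}$.

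The main obstacle is precisely this last comparison of hom-complexes. The subtlety is genuine because $k$ is \emph{not} perfect over $k[\epsilon]$ (it has infinite projective dimension), so a careless application of the coinduction risks producing a larger, Tate-periodic object rather than $D_{\mathrm{dg}}(\aaa)$; one must verify that no such spurious periodic contributions survive. Flatness of the deformation is exactly what rules them out, by forcing $\epsilon$ to act injectively on the relevant complexes so that reduction commutes with the formation of derived hom's. I expect that assembling these ingredients coherently at the dg level, rather than merely at the level of triangulated categories, is the only real work, and that it is carried out using the model-categorical techniques of \cite{toen} together with the flat deformation theory of \cite{lowenvandenbergh1}, \cite{lowenvandenbergh2}.
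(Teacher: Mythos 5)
First, a caveat about the comparison you asked for: the paper does not actually prove this proposition --- it states only that ``using the techniques of \cite{toen}, it is not hard to show'' the result, and defers the further investigation to work in progress. So there is no written argument to measure yours against; your overall architecture (verify Definition \ref{exdef}(2), use the adjunction between restriction of scalars and $\mathcal{RH}om_{k[\epsilon]}(k,-)$, build the comparison functor by transposing the $k[\epsilon]$-linear functor induced by $\aaa \cong \Hom_{k[\epsilon]}(k,\bbb) \hookrightarrow \bbb$, then check quasi-equivalence) is exactly the route the paper gestures at, and you are right to flag that $k$ has infinite projective dimension over $k[\epsilon]$ as the essential danger.

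There is, however, a genuine gap at the crux, namely quasi-fully-faithfulness. The formula you invoke, $k \otimes_{k[\epsilon]} R\Hom_{\bbb}(M,N) \simeq R\Hom_{\aaa}(\bar M,\bar N)$, is the base-change isomorphism for \emph{flat lifts} $M,N$ of objects of $\aaa$; it is the statement relevant to the left adjoint $k\otimes^L_{k[\epsilon]}-$ and to the obstruction theory of \cite{lowen2}. But under your comparison functor the objects of $D(\aaa)$ land in $D(\bbb)$ as objects on which $\epsilon$ acts by \emph{zero}, not injectively, so the mechanism you propose for killing the periodic contributions does not apply to them, and for such objects $R\Hom_{\bbb}(M,N)$ is precisely the large periodic object. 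Already for $\aaa = \Mod(k)$, $\bbb = \Mod(k[\epsilon])$ and $M=N=k$ one has $R\Hom_{\bbb}(k,k) \simeq k[y]$ with $|y|=1$ and $\epsilon$ acting by zero, so $k\otimes_{k[\epsilon]}R\Hom_{\bbb}(k,k) \simeq k[y] \neq k = R\Hom_{\aaa}(k,k)$: your formula fails in the most basic example. In To\"en's description the mapping complex of $\mathcal{RH}om_{k[\epsilon]}(k, D_{\mathrm{dg}}(\bbb))$ between two such objects is the bimodule Hom over $k \otimes^L_{k[\epsilon]} D_{\mathrm{dg}}(\bbb)\op$, i.e.\ roughly $R\Hom_{k\otimes^L_{k[\epsilon]}k}\bigl(k, R\Hom_{\bbb}(M,N)\bigr)$, and the cancellation of the periodic part is a Koszul-duality computation over $k\otimes^L_{k[\epsilon]}k \simeq \Ext^{\ast}_{k[\epsilon]}(k,k)$ rather than a flat base change; flatness of the hom-complexes of $D_{\mathrm{dg}}(\bbb)$ enters elsewhere (e.g.\ in identifying $k\otimes^L_{k[\epsilon]}$ with $k\otimes_{k[\epsilon]}$), not at this step. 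Since this identification is exactly where the real content of the proposition lies, the proposal as written replaces the hard step by an isomorphism that does not hold for the objects in question.
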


The further investigation of Definition \ref{exdef} (and its variations with respect to other model structures on dg categories (\cite{tabuada3, tabuada2, tabuada}) is part of a work in progress.

\begin{acknowledgements}
The author is grateful to Bernhard Keller for his continuous interest in, and many pleasant discussions on the topic of this paper.
\end{acknowledgements}

\def\cprime{$'$}
\providecommand{\bysame}{\leavevmode\hbox to3em{\hrulefill}\thinspace}
\bibliography{Bibfile}
\bibliographystyle{amsplain}

\end{document}